\def\k{\text{\begin{cursive}
\hspace{-0,1cm}\textcal{k}\hspace{-0,15cm}
\end{cursive}}}
\theoremstyle{plain}
\newtheorem{theorem}{Theorem}[section]
\theoremstyle{plain}
\newtheorem{lemma}[theorem]{Lemma}
\newtheorem{prop}[theorem]{Proposition}
\newtheorem{cor}[theorem]{Corollary}
\theoremstyle{definition}
\newtheorem{definition}{Definition}[section]
\newtheorem{remark}{Remark}[section]
\newtheorem*{maintheorem*}{Main Theorem}
\newtheorem*{maincorollary*}{Main Corollary}
\DeclareFontFamily{U}{BOONDOX-calo}{\skewchar\font=50 }
\DeclareFontShape{U}{BOONDOX-calo}{m}{n}{
	<-> s*[1.05] BOONDOX-r-calo}{}
\DeclareFontShape{U}{BOONDOX-calo}{b}{n}{
	<-> s*[1.05] BOONDOX-b-calo}{}
\DeclareMathAlphabet{\mathcalb}{U}{BOONDOX-calo}{m}{n}
\SetMathAlphabet{\mathcalb}{bold}{U}{BOONDOX-calo}{b}{n}
\DeclareMathAlphabet{\mathbcalb}{U}{BOONDOX-calo}{b}{n}
\newcommand{\R}{\ensuremath{\mathbb{R}}}
\newcommand{\E}{\ensuremath{\mathbb{E}}}
\newcommand{\goto}{\ensuremath{\rightarrow}}
\def\e{{\text{e}}}
\numberwithin{equation}{section} \allowdisplaybreaks
\title[Optimal control  of  third grade fluids  with multiplicative noise  ]
{Optimal control  of   third grade fluids  with multiplicative noise  }
\date{\today}
\author[Yassine Tahraoui]{ Yassine Tahraoui}
\address[ Yassine Tahraoui]{
	Center for Mathematics and Applications (NovaMath),  NOVA	SST,	Portugal}
\email[Yassine Tahraoui]{tahraouiyacine@yahoo.fr}
\author[Fernanda Cipriano]{Fernanda Cipriano}
\address[Fernanda Cipriano]{
	Center for Mathematics and Applications (NovaMath), NOVA	SST and Department of Mathematics, NOVA	SST,	Portugal}
\email[Fernanda Cipriano]{cipriano@fct.unl.pt}
\begin{document}

	\begin{abstract}
This work aims to control the dynamics of certain non-Newtonian fluids 	in a bounded domain of $\mathbb{R}^d$, $d=2,3$  perturbed by a multiplicative Wiener noise,  the control acts	as	a predictable distributed random force, and the goal is to achieve a predefined velocity profile under a minimal cost.
  Due to the strong nonlinearity of the stochastic state equations, strong solutions are available just locally in time, and  the cost functional 	includes an appropriate  stopping time. First,	we  show the existence of an optimal pair.   Then,	we	show that the solution of the stochastic forward	linearized equation coincides with the G\^ateaux derivative of the control-to-state mapping,	after	establishing	some	stability results.	Next,	
   we analyse the backward stochastic adjoint equation;	where the uniqueness of  solution holds	only	when	$d=2$.
 Finally, we establish a duality relation  and deduce the necessary  optimality conditions.
  		\end{abstract} 
  	
	\maketitle
		\textbf{Keywords:}  Third grade fluid, Navier-slip boundary conditions,	Stochastic	PDE, Optimal control, Necessary optimality condition,	Multiplicative	noise.\\[2mm]
	\hspace*{0.45cm}\textbf{MSC:} 35R60,	49K20,	76A05, 76D55,	60H15. \\
		

\tableofcontents              
     \section{Introduction}

The purpose of this article is to control the random  velocity field $y$   of a non-Newtonian fluid, which fills a  bounded domain  $D\subset\mathbb{R}^d, d=2,3$  with smooth boundary, and moves under the action of  random forces.
More precisely, we consider a tracking  problem and the goal is  to minimize cost functionals of the following form
$
\mathcal{J}_M(U,y)=f_M(y,\tau^U_M)+ \dfrac{\lambda}{p}\E\int_0^T\Vert U\Vert_{(¨H^1(D))^d}^pdt,	
$
where  $f_M$ is	a	given	functional of the physical state $y$, which lives up to a certain stopping time $\tau^U_M$, $\E$ denotes the average over all possible outcomes of the physical system, the constante $\lambda>0$
sets the intensity of the cost and  $p>2(d+1)$. 
The control acts through the external force $U,$ and the random velocity field $y$ is constrained to satisfy
the incompressible third grade fluid equations driven by a multiplicative Wiener-noise,	see	\eqref{I}	and	Section \ref{Sec2} for the precise mathematical framework. 

Most studies on fluid dynamics have been devoted to Newtonian fluids, which are characterized by the classical Newton's  law of viscosity.
However, there exist many real and industrial fluids  with nonlinear viscoelastic behavior  that does not obey Newton's law of viscosity, and consequently  cannot be described by the classical  viscous Newtonian fluids model. Among these fluids, we can find  natural biological fluids, geological flows,  industrial oils, fluids arising in food processing and cosmetic production,  and many others,	see \textit{e.g} \cite{DR95,FR80}. Therefore, it is necessary to consider  more general fluid models. Recently, the class of non-Newtonian fluids of differential type has received a special attention  since it	includes	second	grade	fluids,	which can be related to Camassa and Holm equations, possessing interesting geometric properties.
The  third	grade	fluids	correspond	to	the	next	step	of	modeling fluids	of differential type,	which includes	larger	class	of	fluids,	see \textit{e.g.}	\cite{Busuioc, Hol-Mar-Rat-98}. 
Consequently,  the  mathematical analysis of third grade fluids equations should be relevant to  predict and control the  behavior of these fluids, in order  to design optimal flows that can be successfully used and applied in the industry. 
From mathematical point of view, 
 third grade fluids constitute an hierarchy of fluids with  increasing complexity and more  nonlinear terms, which is  more complex  and require more involved analysis.

Without	exhaustiveness,	 the third grade fluids equations  with Dirichlet  boundary condition were  studied  in  \cite{AC97, SV95}.	Later on \cite{Bus-Ift-1, Bus-Ift-2},  
supplementing the equation with a Navier-slip boundary condition, the authors  
established the existence of a global solution for initial conditions in $H^2$
and proved that uniqueness holds in 2D.
  In \cite{ Cip-Did-Gue},  the authors extended  the later deterministic results  to  stochastic models in 2D. Recently, we proved  in \cite{yas-fer-2}  the existence and uniqueness of local adapted		solution to the stochastic third grade fluids equations with Navier-slip boundary conditions.  The sample paths take values in the Sobolev space $H^3$		and	are defined	 up to a certain positive stopping time,	in	2D	and	3D.

The control problems  for Newtonian fluids  (first	grade	fluids), where the flows are   described by  Navier-Stokes  equations, have been extensively studied in the literature, let us refer for instance   \cite{Abergel90,Delos-Griesse, Hinze-Kunisch}. 
 We emphasize that directing the velocity field to a desired velocity profile over time,  through a tracking-type cost functional,  has a wide range of applications.
To the best of our knowledge, the control problem for the second grade fluids (differential type) has been addressed for the first time in \cite{Arada-Cip}, where the authors proved the existence of a 2D-deterministic  optimal control and deduced the first order optimality conditions. Later	on, the control problem for 2D-stochastic second grade fluid models have been studied in \cite{Chem-Cip-2, CP19}.  Recently	in \cite{yas-fer},  the authors  tackled the control problem 
for 2D-deterministic third grade fluids. Namely, they showed 
the existence of an optimal solution, they deduced the 
 first order optimality conditions and proved the  uniqueness  of the
coupled system.

It is worth mentioning that the majority	of	the works in the literature considered the	optimal	control	problem	in	2D. 
The 3D-problem is much more difficult due to the fact that 
 the existence and uniqueness of the solution of  Newtonian	or	Non-Newtonian	fluid equations 
holds just  locally in time (defined	only up	to	a	certain	stopping	time	in	the	stochastic	case).	Taking	into	account	 the existence and uniqueness result of the solution,	
we 	define	the 	cost	functional	including		the	stopping	time	and	obtain	a	nonconvex	optimization	problem,	see	\cite{Benner-Trautwein2019}.	In	order	to	establish	the	existence	of	the optimal	control	and	to derive the	optimality	system,	it is necessary to	analyze	the	dependence	of	the	cost	functional	on	the	stopping	time, which is not an easy issue.
In	\cite{Benner-Trautwein2019},	the	authors	proved	the	well-posedeness	of	local	mild	solution	to	the	stochastic	Navier-Stokes	equations	with	multiplicative	Levy	noise.	Then,	they	proved	the	existence	and	uniqueness	of	optimal	control,	by	analyzing	some	cost	functional	depending	on	stopping	time.	Recently,	they	studied	in	\cite{Benner-Trautwein2021},	the	optimality	system	and	established	the	necessary	and	sufficient	optimality	conditions	with	a	multiplicative	noise	driven	by	a		$Q$-Wiener	process.	It is  worth mentioning that	the approach in	\cite{Benner-Trautwein2019,Benner-Trautwein2021}	was	based	on	 the theory of semigroups.	
   
Here, we aim to control stochastic non-Newtonian fluids is 2D and 3D.
Namely, we will solve a	stochastic	optimal	control	problem	constrained	by	the stochastic third	grade	fluid	equations. We will 
show the existence of the optimal control,	and	establish	the	optimality	system	via	variational	approach.
 The stochastic third grade fluid equations is highly	nonlinear which requires a subtil analysis, as well as the corresponding stochastic linearized and backward stochastic adjoint equations.	Moreover,	we should  say that our work improves substantially the 2D result obtained in	\cite{Chem-Cip-2} in the case $\beta=0$,	since  one	can	perform	the	analysis	of	the	optimality system
	without the special weights used in \cite{Chem-Cip-2}. Finally,  we show  that a similar  analysis applies  to derive the optimality system for cost functional including velocity field derivatives, which
	can be relevant to control the turbulence.

The article is organized as follows:
in Section \ref{Sec2}, we	precise	the	mathematical	framework	and	the	assumptions	on	the	data.	Then,	we	recall	the	defintion	of	stochastic	local	solution	and	a	result	about	the		existence	and	uniqueness.	Section	\ref{Sec3}	is	devoted	to	establish	some	stability	results.	In	Section	\ref{Sec4},	we	formulate	the	optimal	control	problem	and	we	state	the	main	results	of	this	work.	Then,	we	show	the	existence	and	uniqueness	of	optimal	solution.	Section \ref{Sec5} is devoted to show the existence and the uniqueness of the solution to the stochastic linearized state equation.	In	section	\ref{Sec6},	we  show that	the	G\^ateaux	derivative		of the control-to-state mapping	concides	with	the	solution	of	the	linearized	equations.	Then,	we	write	the	variation	of	the	cost	functional.	In	Section	\ref{Sec7},	we	study the	backward stochastic 	adjoint	equations	in	2D	and	3D,		where	the	uniqueness	holds	only	in	2D.		Section	\ref{Section-Optimality-condition}	concernes	the		proof	of	 a duality relation between the solution of the linearized equation and the adjoint state.	Then, we deduce the first order optimality  condition. We propose in Section \ref{Sec-V-control} the extension of our analysis to the case of cost functional including derivatives. 	Finally,	we gather in Section \ref{Technical-results} some technical lemmas, that we used repeatedly in our analysis.	
\section{Content of the study}\label{Sec2}
Let	$\mathcal{W}$ be a cylindrical  Wiener process defined on a complete probability space	$(\Omega,\mathcal{F},P)$,  endowed with  the right-continuous filtration $\{\mathcal{F}_t\}_{t\in[0,T]}$ generated by $\{\mathcal{W}(t)\}_{t\in [0,T]}.$ We assume that $\mathcal{F}_0$ contains all the P-null subset of $\Omega$ (see Subsection \ref{Noise-section} for the assumptions on the noise).
The goal  is to study the optimal  control of a third grade fluid where the control is introduced via the  external forces. The fluid  fills a bounded  and simply connected domain $D  \subset  \mathbb{R}^d,\; d=2,3,$ with regular boundary $\partial D$, and 
its dynamics is 
governed by the following equations

\begin{align}\label{I}
\begin{cases}
d(v(y))=\big(-\nabla \textbf{P}+\nu \Delta y-(y\cdot \nabla)v(y)-\sum_{j}v^j(y)\nabla y^j+(\alpha_1+\alpha_2)\text{div}(A^2) \hspace*{-3cm}&\\
\hspace*{2cm}+\beta \text{div}(|A|^2A)+U\big)dt+ G(\cdot,y)d\mathcal{W} \quad &\text{in } D \times (0,T)\times\Omega,\\
\text{div}(y)=0 \quad &\text{in } D \times (0,T)\times\Omega,\\
y\cdot \eta=0, \quad \left(\eta \cdot \mathbb{D}(y)\right)\big\vert_{\text{tan}}=0  \quad &\text{on } \partial D \times (0,T)\times\Omega,\\
y(x,0)=y_0(x) \quad &\text{in } D \times\Omega,
\end{cases}
\end{align}
where $y:=(y^1,\dots, y^d)$ is the velocity of the fluid, $\textbf{P}$ 
is the pressure and $U$ corresponds to the  external force.
The operators   $v$, $A$, $\mathbb{D}$ are defined by
$v(y)=y-\alpha_1 \Delta y:=(y^1-\alpha_1 \Delta y^1,\dots,y^d-\alpha_1 \Delta y^d)$ 
and 
$ A:=A(y)=\nabla y+\nabla y^T=2\mathbb{D}(y)$.
The vector $\eta$ denotes the outward normal to the boundary $\partial D$ and  $u\vert_{\text{tan}}$
represents the tangent component of a vector $u$ defined on  $\partial D$. 
In addition,  $\nu$ denotes  the viscosity of the fluid
and  $\alpha_1,\alpha_2$, $\beta$  
are material moduli satisfying 
\begin{align}\label{condition1}
\nu \geq 0, \quad \alpha_1\geq 0, \quad |\alpha_1+\alpha_2 |\leq \sqrt{24\nu\beta}, \quad  \beta \geq 0.
\end{align}
It is worth noting that  \eqref{condition1} allows the motion of the fluid to be compatible with thermodynamic laws,	namely	it	ensures		that the Helmholtz free energy density be at a minimum value when the
fluid is locally at rest,	we	refer	to	\cite{FR80}	for	more	details.
The diffusion coefficient $G$ will be specified in Subsection \ref{Noise-section}.
\subsection{Functional spaces and notations}
For a Banach space $E$, we define
$$  (E)^k:=\{(f_1,\cdots,f_k): f_l\in E,\quad l=1,\cdots,k\}\;\text{ for	positive integer }	k.$$
Let us introduce the following spaces:
\begin{equation}
\begin{array}{ll}
H&=\{ y \in (L^2(D))^d \,\vert \text{ div}(y)=0 \text{ in } D \text{ and } y\cdot \eta =0 \text{ on } \partial D\}, \nonumber\\[1mm]
V&=\{ y \in (H^1(D))^d \,\vert \text{ div}(y)=0 \text{ in } D \text{ and } y\cdot \eta =0 \text{ on } \partial D\}, \\[1mm]
W&=\{ y \in V\cap (H^2(D))^d\; \vert\, (\eta \cdot 
\mathbb{D}(y))\big\vert_{\text{tan}} =0 \text{ on } \partial D\},\quad
\widetilde{W}=(H^3(D))^d\cap W. \nonumber
\end{array}
\end{equation}

First, we recall the Leray-Helmholtz projector $\mathbb{P}: (L^2(D))^d \to H$, which is a linear bounded operator characterized by the following $L^2$-orthogonal decomposition
$v=\mathbb{P}v+\nabla \varphi,\;  \varphi \in H^1(D). $

Now, let us introduce  the scalar product between two matrices  $
A:B=tr(AB^T)$
and denote $\vert A\vert^2:=A:A.$
The divergence of a  matrix $A\in \mathcal{M}_{d\times d}(E)$ is given by 
$(\text{div}(A)_i)_{i=1}^{i=d}=(\displaystyle\sum_{j=1}^d\partial_ja_{ij})_{i=1}^{i=d}¨. $
The space $H$ is endowed with the  $L^2$-inner product $(\cdot,\cdot)$ and the associated norm $\Vert \cdot\Vert_{2}$. We recall that
\begin{align*}
(u,v)&=\sum_{i=1}^d\int_Du_iv_idx, \quad  \forall u,v \in (L^2(D))^d,\quad
(A,B)&=\int_D A: Bdx ; \quad  \forall A,B \in \mathcal{M}_{d\times d}(L^2(D)).
\end{align*}
On the functional spaces $V$, $W$ and  $\widetilde{W}$,
we will consider  the following inner products 
\begin{equation}
\begin{array}{ll} 
(u,z)_V&:=(v(u),z)=(u,z)+2\alpha_1(\mathbb{D}u,\mathbb{D}z),\\[1mm]
(u,z)_W&:=(u,z)_V+(\mathbb{P}v(u),\mathbb{P}v(z)),\\[1mm]
(u,z)_{\widetilde{W}}&:=(u,z)_V+(\text{curl}v(u),\text{curl}v(z)),
\end{array}
\end{equation}
and denote by $\Vert \cdot\Vert_V,\Vert \cdot\Vert_W$ and $\Vert \cdot\Vert_{\widetilde{W}}$ the corresponding norms. The usual norms on the classical Lebesgue and Sobolev spaces $L^p(D)$ and $W^{m,p}(D)$ will be denoted by   $\|\cdot \|_p$ and 
$\|\cdot\|_{W^{m,p}}$, respectively.
In addition, given a Banach space $X$, we will denote by $X^\prime$ its dual.\\

For the sake of simplicity, we do not distinguish between scalar, vector or matrix-valued   notations when it is clear from the context. In particular, $\Vert \cdot \Vert_E$  should be understood as follows
\begin{itemize}
	\item $\Vert f\Vert_E^2= \Vert f_1\Vert_E^2+\cdots+\Vert f_d\Vert_E^2$ for any $f=(f_1,\cdots,f_d) \in (E
	)^d$.
	\item $\Vert f\Vert_{E}^2= \displaystyle\sum_{i,j=1}^d\Vert f_{ij}\Vert_E^2$ for any $f\in \mathcal{M}_{d\times d}(E)$.
\end{itemize}

Throughout the article,  we consider $Q= D\times [0,T], \quad \Omega_T=\Omega\times [0,T],$  and denote by $C,C_i, i\in \mathbb{N}$,   generic constants, which may vary from line to line.\\
$\quad$	The  results on the  following modified Stokes problem will be very usefull to  our analysis
\begin{align}\label{Stokes}
\begin{cases}
h-\alpha_1\Delta h+\nabla \textbf{P}=f, \quad 
\text{div}(h)=0 \quad &\text{in } D,\\
h\cdot \eta=0, \quad (\eta \cdot \mathbb{D}(h))\big\vert_{\text{tan}}=0  \quad &\text{on } \partial D.	\end{cases}
\end{align}
The solution $h$ will be denoted by   $h=(I-\alpha_1\mathbb{P}\Delta)^{-1}f$.
We recall the existence and the uniqueness results, as well as the regularity of the solution $(h,\textbf{P})$. 
\begin{theorem}(\cite[Theorem 3]{Busuioc})
Suppose that  $f \in (H^m(D))^d,\, m \in \mathbb{N}$. Then there exists a unique (up to a constant for $\textbf{P}$) solution 	$(h,\textbf{P}) \in (H^{m+2}(D))^d\times H^{m+1}(D)$  of the Stokes problem \eqref{Stokes} such that
	$$ \Vert h \Vert_{H^{m+2}}+\Vert \textbf{P} \Vert_{H^{m+1}} \leq C(m)\Vert f\Vert_{H^m},\;\text{	
		where	}  C(m)\text{	is a positive constant.	}$$
\end{theorem}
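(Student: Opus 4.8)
The plan is to obtain the solution in two stages: first a weak (variational) solution in $V$ by a Hilbert-space argument, and then full regularity by elliptic bootstrapping. For the first stage, I would recast \eqref{Stokes} in the weak form: find $h\in V$ such that $(h,\phi)_V=(f,\phi)$ for every $\phi\in V$. Indeed, testing the equation against a divergence-free $\phi$ with $\phi\cdot\eta=0$ removes the pressure term $\nabla\textbf{P}$, and the integration by parts that underlies the identity $(v(u),z)=(u,z)+2\alpha_1(\mathbb{D}u,\mathbb{D}z)$ (which uses precisely the Navier-slip condition $(\eta\cdot\mathbb{D}(h))|_{\mathrm{tan}}=0$) turns the left side into $(h,\phi)_V$. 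For $\alpha_1>0$, $(V,(\cdot,\cdot)_V)$ is a Hilbert space whose norm is equivalent to the $(H^1(D))^d$-norm by Korn's inequality, while $\phi\mapsto(f,\phi)$ is a bounded linear functional on $V$ since $|(f,\phi)|\le\|f\|_2\|\phi\|_2\le\|f\|_2\|\phi\|_V$. The Riesz representation theorem then yields a unique $h\in V$ together with the energy bound $\|h\|_V\le\|f\|_2$. (When $\alpha_1=0$ the problem degenerates to the Leray decomposition $h=\mathbb{P}f$, $\nabla\textbf{P}=(I-\mathbb{P})f$, for which every claim is immediate, so I focus on $\alpha_1>0$.) The residual $f-h+\alpha_1\Delta h$ annihilates all divergence-free test fields, so de Rham's lemma produces a pressure $\textbf{P}\in L^2(D)$, unique up to an additive constant, with $h-\alpha_1\Delta h+\nabla\textbf{P}=f$ holding in the sense of distributions.

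For the second stage I would invoke the elliptic regularity theory for the stationary Stokes-type system. Rewriting the equation as $-\alpha_1\Delta h+\nabla\textbf{P}=f-h$ with $\mathrm{div}\,h=0$, the principal part is the Stokes operator, which is elliptic in the Agmon--Douglis--Nirenberg sense, and the boundary operators $h\cdot\eta$ and $(\eta\cdot\mathbb{D}(h))|_{\mathrm{tan}}$ satisfy the complementing (Shapiro--Lopatinskii) condition. Starting from $m=0$, the datum $f-h$ lies in $(L^2(D))^d$ since $h\in V\subset(H^1(D))^d$; boundary elliptic regularity then gives $(h,\textbf{P})\in(H^2(D))^d\times H^1(D)$ together with $\|h\|_{H^2}+\|\textbf{P}\|_{H^1}\le C\big(\|f-h\|_2+\|h\|_2\big)\le C\|f\|_2$, where the last inequality absorbs the lower-order contribution using the energy bound $\|h\|_V\le\|f\|_2$.

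The remaining cases follow by induction on $m$: if the estimate holds at level $m-1$, then $f\in(H^m(D))^d$ forces $f-h\in(H^m(D))^d$ (by the inductive regularity $h\in H^{m+1}$), and a second application of the same boundary regularity estimate upgrades the solution to $(h,\textbf{P})\in(H^{m+2}(D))^d\times H^{m+1}(D)$ with the stated bound $\|h\|_{H^{m+2}}+\|\textbf{P}\|_{H^{m+1}}\le C(m)\|f\|_{H^m}$. I expect the genuine difficulty to lie entirely in this elliptic step near the curved boundary: one must verify the complementing condition for the Navier-slip pair and control the higher tangential and normal derivatives in boundary-flattening charts, where the curvature of $\partial D$ generates lower-order commutator terms that have to be reabsorbed into the right-hand side and the coupling with the pressure tracked so that the constants $C(m)$ remain uniform. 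The purely variational step and the de Rham recovery of $\textbf{P}$ are, by contrast, routine.
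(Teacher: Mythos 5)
The paper does not prove this statement: it is quoted verbatim as \cite[Theorem 3]{Busuioc} and used as a black box (to define $(I-\alpha_1\mathbb{P}\Delta)^{-1}$ and the auxiliary fields $\bar{\sigma}_\k$, $\widetilde{f}_n$, etc.), so there is no internal proof to compare yours against. On its own terms, your outline is the standard and essentially correct route: the identity $(v(h),\phi)=(h,\phi)_V$ for $\phi\in V$ (valid because the boundary integral $\int_{\partial D}(\mathbb{D}(h)\eta)\cdot\phi\,dS$ vanishes when $\phi\cdot\eta=0$ and $(\eta\cdot\mathbb{D}(h))\vert_{\mathrm{tan}}=0$), Riesz representation on $(V,(\cdot,\cdot)_V)$ with Korn's inequality for the norm equivalence, de Rham for the pressure, and Agmon--Douglis--Nirenberg regularity with induction on $m$. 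Two caveats are worth recording. First, your parenthetical about $\alpha_1=0$ is wrong: there $h=\mathbb{P}f$ gains no derivatives, so the conclusion $h\in (H^{m+2}(D))^d$ with $\Vert h\Vert_{H^{m+2}}\le C\Vert f\Vert_{H^m}$ fails; the theorem genuinely requires $\alpha_1>0$ (and $C(m)$ depends on $\alpha_1$ and $D$), which matters since the paper's standing hypothesis \eqref{condition1} only imposes $\alpha_1\ge 0$.

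Second, the entire analytic content of the cited theorem sits in the step you defer: verifying the complementing (Shapiro--Lopatinskii) condition for the pair $h\cdot\eta=0$, $(\eta\cdot\mathbb{D}(h))\vert_{\mathrm{tan}}=0$ relative to the Stokes principal part, and passing from the variational solution to a genuine solution of the boundary value problem. Note in particular that the slip condition is a \emph{natural} boundary condition (it is not encoded in $V$), so it can only be recovered a posteriori, once $h\in (H^2(D))^d$ is known, by integrating back by parts against all $\phi\in V$ and using density of tangential traces; until that point one cannot simply invoke boundary elliptic regularity "for the BVP" but must argue via tangential difference quotients in boundary-flattening charts and then read off normal derivatives and the pressure from the equation. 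You correctly identify this as the hard part, but as written it is asserted rather than proved, so the proposal is a sound skeleton rather than a complete proof; for a self-contained argument one would either carry out that verification or, as the authors do, cite \cite[Theorem 3]{Busuioc} directly.
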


We also consider the trilinear form 
$$
b(\phi,z,y)=(\phi\cdot \nabla z,y)=\int_D(\phi\cdot \nabla z)\cdot y dx, \quad \forall \phi,z,y \in (H^1(D))^d,$$
which verifies $b(y,z,\phi)=-b(y,\phi,z),\quad\forall y \in V; \forall z,\phi \in (H^1(D))^d$.


\subsection{The stochastic setting}\label{Noise-section}

Let us consider a cylindrical Wiener process $\mathcal{W}$ 
defined on  $(\Omega,\mathcal{F},P)$, which can be written as 
$\mathcal{W}(t)= \sum_{\k \ge 1} e_\k \beta_\k(t),$
where $(\beta_\k)_{\k\ge 1}$ is a sequence of  mutually independent real valued standard Wiener processes and $(e_\k)_{\k\ge 1}$ is a complete orthonormal system in a separable Hilbert space $\mathbb{H}$.
Recall that 
the sample paths  of $\mathcal{W}$ take values 
in a larger Hilbert space $H_0$ such that   $\mathbb{H}\hookrightarrow H_0$ defines a Hilbert–Schmidt
embedding. For example, the space $H_0$ can be defined as follows  
$$ H_0=\bigg\{ u=\sum_{\k \ge 1}\gamma_{\k}e_\k\;\vert\;  \sum_{ \k\geq 1} \dfrac{\gamma_\k^2}{\k^2} <\infty\bigg\}
\text{	endowed with the norm	}
 \Vert u\Vert_{H_0}^2=\sum_{ \k\geq 1} \dfrac{\gamma_\k^2}{\k^2}, \;  u=\sum_{\k \ge 1}\gamma_ke_\k.$$
Hence,  $P$-a.s. the trajectories of $\mathcal{W}$ 
belong to the space $C([0,T],H_0)$ (cf. \cite[Chapter 4]{Daprato}).\\

In order to define the stochastic integral in the infinite dimensional framework, let us consider  another Hilbert space $E$ and denote by $L_2(\mathbb{H},E)$ the space of Hilbert-Schmidt operators from $\mathbb{H}$ to $E$, which is the subspace of the linear operators  defined as follows
$$L_2(\mathbb{H},E):=\bigg\{ G:\mathbb{H} \to E \;\vert \quad  \Vert G\Vert^2_{L_2(\mathbb{H},E)}:=\sum_{\k \ge 1}\Vert G \e_\k \Vert_E^2 < \infty\bigg\}.$$
Given a $L_2(\mathbb{H},E)-$valued predictable \footnote{$\mathcal{P}_{T}:=\sigma(\{ ]s,t]\times F_s \vert 0\leq s < t \leq T,F_s\in \mathcal{F}_s \} \cup \{\{0\}\times F_0 \vert F_0\in \mathcal{F}_0 \})$ (see \cite[p. 33]{Liu-Rock}). Then, a process defined on $\Omega_T$ with values in a given space $E$ is predictable if it is $\mathcal{P}_{T}$-measurable.} process $G\in L^2(\Omega;L^2(0,T;L_2(\mathbb{H},E)))$, and taking $\sigma_\k=Ge_\k$, we may define the It\^o stochastic integral by
$ \displaystyle\int_0^tGd\mathcal{W}=\sum_{\k\ge 1} \int_0^t\sigma_\k d\beta_k, \; \forall t\in [0,T].$\\
Next, we will precise the assumptions on the data.
\subsection{Definition of the diffusion coefficient and assumptions}
Let us consider 
a family of Carath\'eodory functions
 $\sigma_\k: [0,T]\times \R^d\mapsto \R^d, \; \k \in\mathbb{N},$  
  which  satisfies
$\sigma_\k(t,0)=0$,  and   there exists $L > 0$  such that for a.e. $t\in (0,T)$,  
\begin{align}
\label{noise1}
\quad &\sum_{\k\ge 1}\big| \sigma_\k(t,\lambda)-\sigma_\k(t,\mu)\big|^2  \leq L  |\lambda-\mu|^2;	\quad	\forall	\lambda,\mu \in \R^d.
\end{align}
In addition, we assume that there exists a sequence $(a_k)\subset \mathbb{R}_0^+$  such that
\begin{align}
\label{noiseV}
\vert \nabla \sigma_\k(\cdot,\cdot)\vert  \leq a_k, \quad \sum_{\k\ge 1} a_k^2 <\infty,
\end{align}
and $\forall \mu\in \mathbb{R}^d$, the mapping
$
(t,\lambda) \to \nabla \sigma_\k(t,\lambda) \mu
$
is also a Carath\'eodory function.

We notice that, in particular, \eqref{noise1} gives 
$\displaystyle \,\mathbb{G}^2(t,\lambda):= \sum_{\k\ge 1}
 |\sigma_\k(t,\lambda)|^2\le L\,|\lambda|^2.
$ 

For each $t\in[0,T]$ and $y\in V$, we introduce the  Hilbert-Schmidt operator
$$
G(t,y): \mathbb{H}\goto (H^1(D))^d, \qquad G(t,y)e_\k= \{ x \mapsto \sigma_\k\big(t,y(x)\big)\},
\quad \k \ge 1.
$$
	Since $G$ satisfies $\sigma_\k=Ge_\k$,   the stochastic integral 
	$\displaystyle\int_0^tG(\cdot,y)d\mathcal{W}=\sum_{\k\ge 1}\int_0^t\sigma_\k(\cdot,y)d\beta_\k$ 
	is a well-defined $(\mathcal{F}_t)_{t\in [0,T]}$-martingale with values in	$(H^1(D))^d$ (resp. $(L^2(D))^d$).
\begin{itemize}
	\item[$\mathcal{H}_0:$] In addition, for any $t\in [0,T]$, we assume that $\sigma_\k(t,\cdot):\mathbb{R}^d\to \mathbb{R}^d$ is G\^ateaux differentiable, satisfying		for	any	$v\in	\mathbb{R}^d$
	\begin{align}\label{Gateaux-derivative-noise}
		\dfrac{1}{\delta}[\sigma_\k(t,y+\delta v)-\sigma_\k(t,y)]=\nabla_y\sigma_\k(t,y) v+R_\k^\delta(t, v), \quad \forall \k \geq 1,
\end{align}
with  $\vert R_\k^\delta(t, v)\vert \leq b_\k\vert v\vert \delta^{\gamma}, \gamma >0 $, where  $(b_k)\subset \mathbb{R}_0^+$ and $\sum_{ \k\geq 1}b_\k^2 <\infty$.\\
\end{itemize}
\begin{remark}
	The	additional assumption  $\mathcal{H}_0$ on
	 $(\sigma_\k)_\k$  is assumed to	obtain	the	necessary	optimality	condition	(see	Section	\ref{Sec6}	and	Section	\ref{Section-Optimality-condition}	).	
\end{remark}
\subsection{Assumptions on the   data.}
\begin{itemize}
		\item[$\mathcal{H}_1:$]
		we consider $y_0: \Omega \to \widetilde{W}$ and $ U:\Omega\times[0,T] \to (H^1(D))^d$ such that
		 \begin{align}
		&\bullet \; y_0\;\text{ is }\mathcal{F}_0\text{-measurable and}\; U\;\text{ is predictable}.\notag\\
			  &\bullet\; 			U\in L^p(\Omega\times (0,T); (H^1(D))^d),\quad y_0\in L^p(\Omega,\widetilde{W}),\quad 	\text{for fixed}\quad  p>2(d+1).\, \label{data-assumptions}
			\end{align}

		\end{itemize}


\begin{remark}
		The	 assumption $p>2(d+1)$ ensures uniform estimates for the solution of finite dimensional linearized and adjoint equations without the special weights  used in 
\cite{Chem-Cip-2}.
\end{remark}

Unless otherwise stated,	$p$	always	satisfies	$p>2(d+1)$	in	the	following.

\subsection{Existence of local strong solution}
Let us recall  the notion of the local solution and the pathwise uniqueness. 
This	subsection	is	based	on the  results from \cite{yas-fer-2}.
	\begin{definition}\label{Def-strong-sol-main}
		Let $(\Omega,\mathcal{F},(\mathcal{F}_t)_{t\geq 0},P)$ be a stochastic basis and $\mathcal{W}(t)$ be a $(\mathcal{F}_t)$-cylindrical Wiener process. We say that 
		a pair $(y,\tau)$  is a  local strong (pathwise) solution to \eqref{I} iff:	
		\begin{itemize}
			\item $\tau$ is an a.s. strictly positive $(\mathcal{F}_t)$-stopping time.
			\item The velocity $y$ is a $W$-valued predictable process satisfying
			$$y(\cdot \wedge \tau) \in L^p(\Omega;\mathcal{C}([0,T];W^{2,4}(D)))\cap L^p_w(\Omega;L^\infty(0,T;\widetilde{W}))	 \footnote{$L^p_w(\Omega;L^\infty(0,T;\widetilde{W}))=\{ 	u:\Omega\to	L^\infty(0,T;\widetilde{W})	\text{	is	 weakly-* measurable		and	}	E\Vert	u\Vert_{L^\infty(0,T;\widetilde{W})}^p<\infty\}.$}
			, \quad \text{	for	}	p > 4.$$
			 			\item  $P$-a.s. for all $t\in [0,T]$
			\begin{align*}
			&(y(t\wedge \tau),\phi)_V=(y_0,\phi)_V+\displaystyle\int_0^{t\wedge \tau}\big(\nu \Delta y-(y\cdot \nabla)v(y)-\sum_{j}v(y)^j\nabla  (y)^j
			+(\alpha_1+\alpha_2)\text{div}[A(y)^2]
			\nonumber\\
			&\hspace*{4cm}
			+\beta\text{div}[|A(y)|^2A(y)]+ U,\phi\big)
			ds +\displaystyle\int_0^{t\wedge \tau}(G(\cdot,y),\phi)d\mathcal{W}	\text{ for all } \phi\in V. \nonumber 
			\end{align*}
		\end{itemize} 
	\end{definition}

	\begin{definition}
	\label{def-uniq}
		\begin{itemize}
			\item[i)] 	We say that local pathwise uniqueness holds if for any   given pair $(y^1,\tau^1), (y^2,\tau^2)$ of local strong solutions of \eqref{I},
			 we have $$ P\big(y^1(t)=y^2(t);\; \forall t\in [0,\tau^1\wedge \tau^2]\big)=1.$$
			\item[ii)]  We say that  $((y_M)_{M\in \mathbb{N}}, (\tau_M)_{M\in \mathbb{N}},\mathbf{t})$ is a maximal strong local solution to \eqref{I} if  for each $M \in \mathbb{N}$, the pair $(y_M,\tau_M)$ is a local strong solution, $(\tau_M)_M$ is an increasing sequence of stopping times such that  
			$\mathbf{t}:=\displaystyle\lim_{M\to \infty} \tau_M >0, \quad \text{P-a.s.}$ 	and
						\begin{align}\label{maximal-stopping}
		\sup_{t\in [0,\tau_M]}\Vert y(t)\Vert_{W^{2,4}} 
			\geq M \text{ on }\quad  \{\mathbf{t} <T\}, \quad \forall M\in \mathbb{N},\;	\text{ P-a.s.	}
			\end{align}
		\end{itemize}
	\end{definition}
\begin{theorem}\label{main-thm-exist}
	There exists a unique maximal strong (pathwise) local solution to \eqref{I}	with
		\begin{align}\label{stopping-time}
	\tau_M=\inf\{ t\geq 0: \Vert  y(t)\Vert_{W^{2,4}} \geq M\} \wedge T; \quad M\in \mathbb{N}.\end{align}
\end{theorem}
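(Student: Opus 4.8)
The plan is to construct the solution by a Galerkin approximation combined with a cutoff of the strong nonlinearities, and then to remove the cutoff by means of the stopping times \eqref{stopping-time}. First I would fix a spectral basis $(e_i)$ of $\widetilde{W}$ adapted to the modified Stokes operator $(I-\alpha_1\mathbb{P}\Delta)$ of \eqref{Stokes}, so that the finite-dimensional projections respect the Navier-slip boundary conditions and are compatible with the inner products $(\cdot,\cdot)_V$, $(\cdot,\cdot)_W$, $(\cdot,\cdot)_{\widetilde{W}}$. Because the terms $(y\cdot\nabla)v(y)$, $\sum_j v^j(y)\nabla y^j$ and $\beta\,\mathrm{div}(|A|^2A)$ grow superlinearly, the unmodified Galerkin system need not possess global-in-time solutions; I would therefore insert a smooth cutoff factor $\theta(\|y\|_{W^{2,4}}/M)$ in front of the nonlinear drift. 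This renders the truncated coefficients globally well-behaved and yields, by the standard theory for finite-dimensional It\^o systems with locally Lipschitz coefficients, a unique global solution $y^{n,M}$ of each truncated Galerkin problem.

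The technical heart is to derive a priori estimates on $y^{n,M}$ that are uniform in the Galerkin index $n$ and, on the stopped interval, independent of the cutoff parameter. Applying It\^o's formula successively to $\|y^{n,M}\|_V^2$, to a $W$-level functional, and finally to the $\widetilde{W}$-norm, I would seek to absorb the contributions of the nonlinear terms into the dissipative part. Here the thermodynamic compatibility condition \eqref{condition1}, in particular $|\alpha_1+\alpha_2|\le\sqrt{24\nu\beta}$, is indispensable: it is precisely what permits the indefinite quadratic term $(\alpha_1+\alpha_2)\,\mathrm{div}(A^2)$ to be dominated by the viscous dissipation $\nu\Delta y$ together with the coercive cubic term $\beta\,\mathrm{div}(|A|^2A)$, so that after Young's inequality the gradient contributions close. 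The martingale part is handled via the Lipschitz bound \eqref{noise1} and the Burkholder--Davis--Gundy inequality, using $p>2(d+1)$ and \eqref{data-assumptions} to reach the $L^p(\Omega)$ bounds matching the regularity class in Definition \ref{Def-strong-sol-main}.

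With the uniform bounds in hand, I would pass to the limit $n\to\infty$. In the stochastic setting the natural route is to establish tightness of the laws on a suitable path space, extract a subsequence converging (by Skorokhod / Jakubowski) on a new probability space to a martingale solution of the truncated equation, and then upgrade to a pathwise solution on the original stochastic basis by combining pathwise uniqueness with a Gy\"ongy--Krylov argument. Pathwise uniqueness I would obtain by writing the equation for the difference $w=y^1-y^2$ of two solutions, testing with $v(w)$, exploiting the antisymmetry $b(y,z,\phi)=-b(y,\phi,z)$ and the monotonicity supplied by the third-grade term, and closing the estimate with a stochastic Gronwall inequality on $[0,\tau^1\wedge\tau^2]$; this localization is exactly where the restriction to a stopped interval becomes forced.

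Finally, I would remove the cutoff. On the event $\{t\le\tau_M\}$, with $\tau_M$ as in \eqref{stopping-time}, the factor $\theta$ equals one, so the truncated solution solves \eqref{I} up to $\tau_M$, and the uniqueness above identifies the solutions for different values of $M$ on their common interval. The sequence $(\tau_M)$ is nondecreasing; setting $\mathbf{t}=\lim_{M\to\infty}\tau_M$ and using the a priori control of $\|y\|_{W^{2,4}}$ gives $\mathbf{t}>0$ almost surely, while the very definition of $\tau_M$ yields the blow-up alternative \eqref{maximal-stopping}, producing the maximal local solution $((y_M),(\tau_M),\mathbf{t})$. I expect the main obstacle to be the uniform $\widetilde{W}$-estimate: differentiating the quasilinear cubic term $\beta\,\mathrm{div}(|A|^2A)$ up to $H^3$ level generates several borderline nonlinear products that must be absorbed by the coercive part without loss of derivatives, and it is precisely the delicate interplay of \eqref{condition1} with these highest-order terms that confines the estimates, and hence the existence, to a local-in-time statement.
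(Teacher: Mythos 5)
This theorem is not proved in the present paper: it is recalled from the companion work \cite{yas-fer-2} (the subsection explicitly states it is based on the results there), so there is no in-paper argument to compare against. Your outline --- truncating the nonlinearity at the $W^{2,4}$-level, a Galerkin scheme on a special basis compatible with the modified Stokes operator, uniform $V$/$W$/$\widetilde{W}$ estimates exploiting \eqref{condition1} to absorb the indefinite $(\alpha_1+\alpha_2)$-term into the viscous and coercive cubic parts, stochastic compactness followed by a pathwise-uniqueness plus Gy\"ongy--Krylov upgrade, and removal of the cut-off via the stopping times \eqref{stopping-time} --- is the standard strategy and is consistent with the construction carried out in \cite{yas-fer-2}, including the identification $y=y_M$ on $[0,\tau_M]$ noted in Remark \ref{Rmq-y-yM}.
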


\begin{remark}\label{Rmq-y-yM}
	We	recall that   the local strong solution  of \eqref{I} have been  constructed by using $y_M$ (see \cite[Lemma 5.2]{yas-fer-2}).  In	other	words, 
	   we have: $\forall	M\in \mathbb{N}: y(t):=y_M(t), \; \forall  t\in [0,\tau_M], \,\text{  P-a.s.} $
\end{remark}
Following \cite{yas-fer-2}, we have the following 
estimates for  the solution of \eqref{I}.
\begin{prop}\label{Estimate-y-proposition}
	There exists  $K:=K(L,M,T,\Vert  y_0\Vert_{L^p( \Omega;\widetilde{W})}, \Vert  U\Vert_{L^p( \Omega\times[0,T];(H^1(D))^d})>0	\text{	such that 	}$ 
	\begin{align*}
	&\E \sup_{s\in [0,\tau_M]} \Vert y\Vert_V^2+4\nu\E \int_0^{\tau_M}\Vert D  y\Vert_{2}^2dt+\dfrac{\beta}{2}\E\int_0^{\tau_M}\int_D| A(y)|^4dxdt \leq e^{cT} ( \E\Vert y_{0}\Vert_V^2	+\E\int_0^{T}\Vert U\Vert_2^2dt),\\
	&\E\sup_{s\in [0,\tau_M]}\Vert  y\Vert_{\widetilde{W}}^2:=	\E\sup_{s\in [0,\tau_M]}[\Vert \text{curl }v(y)\Vert_2^2+\Vert y\Vert_V^2]\leq K,\\
	&\E \sup_{ [0,\tau_M]} \Vert y \Vert_{\widetilde{W}}^{p} \leq  K(M,T,p)(1+\E\Vert y_0\Vert_{\widetilde{W}}^{p}+\E\int_0^T\Vert U\Vert_2^{p}ds+ \E\int_0^T\Vert \text{ curl }  U\Vert_2^{p} ds), \quad \forall p >2.
	\end{align*}
\end{prop}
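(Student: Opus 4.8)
The plan is to obtain all three estimates by applying It\^o's formula to a suitable functional of the velocity $y$, evaluated at the stopped time $t\wedge\tau_M$, and then to close each inequality via Gronwall's lemma after isolating the dissipative terms and controlling the stochastic integrals by the Burkholder--Davis--Gundy (BDG) inequality together with the growth bound coming from \eqref{noise1}. The stopping time \eqref{stopping-time} is used throughout: on $[0,\tau_M]$ one has $\Vert y(t)\Vert_{W^{2,4}}\le M$, so the strongly nonlinear contributions become controllable by constants depending on $M$.

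For the first (energy) estimate I would test \eqref{I} against $y$ in the $V$-duality, i.e.\ apply It\^o's formula to $\Vert y\Vert_V^2=(v(y),y)$. The viscous term $\nu\Delta y$ produces the dissipation $4\nu\Vert\mathbb{D}(y)\Vert_2^2$ (using $A=2\mathbb{D}(y)$ and integrating by parts with the Navier-slip condition $(\eta\cdot\mathbb{D}(y))\vert_{\mathrm{tan}}=0$), while the grade-three term $\beta\,\mathrm{div}(|A|^2A)$, after one integration by parts and the identity $A:\nabla y=\frac12|A|^2$, yields $-\frac{\beta}{2}\int_D|A(y)|^4\,dx$. The crucial point is that the two transport terms $-(y\cdot\nabla)v(y)$ and $-\sum_j v^j(y)\nabla y^j$, tested against $y$, cancel: this is the structural identity of fluids of differential type and relies on $\mathrm{div}(y)=0$ and on the antisymmetry $b(y,z,\phi)=-b(y,\phi,z)$. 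The term $(\alpha_1+\alpha_2)\,\mathrm{div}(A^2)$ is absorbed into the $\nu$- and $\beta$-dissipations by Young's inequality, using the thermodynamic constraint $|\alpha_1+\alpha_2|\le\sqrt{24\nu\beta}$ from \eqref{condition1}. The It\^o correction $\Vert G(\cdot,y)\Vert^2_{L_2(\mathbb{H},H)}$ is bounded by $L\Vert y\Vert_2^2$ thanks to \eqref{noise1}, the forcing pairing $(U,y)$ by Young, and the supremum of the martingale part is handled by BDG and partially reabsorbed, so that Gronwall closes the first inequality.

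For the $\widetilde{W}$-bound I would apply It\^o's formula to $\Vert\mathrm{curl}\,v(y)\Vert_2^2$, i.e.\ take the curl of \eqref{I} and test against $\mathrm{curl}\,v(y)$. Here the stopping-time bound $\Vert y\Vert_{W^{2,4}}\le M$, combined with the Sobolev embeddings valid in $d\le 3$, interpolation, and the elliptic regularity of the modified Stokes system \eqref{Stokes}, lets me estimate the high-order contributions arising from differentiating $(y\cdot\nabla)v(y)$, $\sum_j v^j(y)\nabla y^j$, the $(\alpha_1+\alpha_2)$-term and especially $\beta\,\mathrm{div}(|A|^2A)$ by $C(M)(1+\Vert\mathrm{curl}\,v(y)\Vert_2^2)$ plus quantities already bounded in the first step; the curl of the noise stays Hilbert--Schmidt because of \eqref{noiseV}. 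Gronwall then gives the second estimate. The third, $L^p(\Omega)$-estimate follows by applying It\^o to $\Vert y\Vert_{\widetilde{W}}^p$ (equivalently raising the $\widetilde{W}$-energy balance to the power $p/2$), using BDG in the $\E\sup(\cdots)^{p/2}$ form, Young's inequality to reabsorb the martingale bound, and the data assumptions $U\in L^p$, $y_0\in L^p(\Omega,\widetilde{W})$; the term $\Vert\mathrm{curl}\,U\Vert_2^p$ appears precisely because taking the curl of \eqref{I} introduces $\mathrm{curl}\,U$ in the drift.

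The hard part is the $\widetilde{W}$ (curl) estimate: after curling, the grade-three term $\beta\,\mathrm{div}(|A|^2A)$ and the transport terms generate genuinely high-order nonlinear expressions, and one must show, using only the $W^{2,4}$-bound from the stopping time together with the Stokes regularity for \eqref{Stokes}, Sobolev embeddings and interpolation in $d=2,3$, that these close against $\Vert\mathrm{curl}\,v(y)\Vert_2^2$ without losing more derivatives than are available. Tracking the noise contributions $\nabla\sigma_{\k}$ through \eqref{noiseV} inside these estimates, and keeping the constants depending on $M$ only polynomially so that the $L^p$-moment bound remains finite, is the most delicate bookkeeping.
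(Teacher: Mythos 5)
The paper itself contains no proof of this proposition: it is imported from the companion work \cite{yas-fer-2} (note the sentence ``Following \cite{yas-fer-2}, we have the following estimates''), so there is no internal argument to compare yours against line by line. Your sketch does follow the strategy of that reference: It\^o's formula for $\Vert y\Vert_V^2$, with the exact cancellation of the two transport terms $-(y\cdot\nabla)v(y)$ and $-\sum_j v^j(y)\nabla y^j$ against $y$ (which is correct: both pairings reduce to $\pm b(y,y,v(y))$), the sign-definite contribution of $\beta\,\mathrm{div}(|A|^2A)$, and absorption of the $(\alpha_1+\alpha_2)$-term via $|\alpha_1+\alpha_2|\le\sqrt{24\nu\beta}$; then the curl of the equation tested against $\mathrm{curl}\,v(y)$ for the $\widetilde{W}$-bound, exploiting $\Vert y\Vert_{W^{2,4}}\le M$ on $[0,\tau_M]$; and $p$-th powers plus Burkholder--Davis--Gundy for the moment estimate, which is where $\mathrm{curl}\,U$ enters. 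The one place your outline is genuinely thin is the integration by parts in the curl step: under Navier-slip conditions the boundary terms produced by the transport and grade-three terms do not vanish for free (this is precisely the delicate point in \cite{Bus-Ift-2}, and the analogue of the difficulty displayed in \eqref{boundary-3D} for the adjoint system), so calling their treatment ``bookkeeping'' understates the actual content of the $\widetilde{W}$-estimate; apart from that, the approach is the expected one.
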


\section{Stability results }\label{Sec3}
In this section, we assume that the initial data $y_0$ and the force $U$ satisfy $\mathcal{H}_1$, and  consider 
 two  strong local solutions
 $(y_1,\tau_M^1)$ and $(y_2,\tau_M^2)$  to \eqref{I}	in the sense of Definition \ref{Def-strong-sol-main} with  the initial conditions $y_0^1,y_0^2$ and the forces $U_1,U_2$, respectively. In addition, we denote $y=y_1-y_2, y_0=y_0^1-y_0^2$ and $U=U_1-U_2$.
 
  First, we state a result, which follows from   \cite[Lem. 5.1]{yas-fer-2}.
\begin{lemma}\label{Lemma-V-stability}  For	all	$ p\geq 2${\color{blue},}  there exists $C(M,L,T,p)>0$ such that
	\begin{align*}
	\E\sup_{s\in [0, \tau_M^1\wedge\tau_M^2]}\Vert y_1(s)-y_2(s)\Vert_V^p
	\leq C(M,L,T,p)\big[\E\Vert y_0^1-y_0^2\Vert_{V}^p+\E\int_0^{\tau_M^1\wedge\tau_M^2}\Vert U_1(s)-U_2(s)\Vert_{2}^pds\big].
	\end{align*}
\end{lemma}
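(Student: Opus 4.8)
The plan is to write a difference equation for $y=y_1-y_2$ and run a stochastic energy estimate in the $V$-norm, closing it by Gronwall's inequality. Because $v$ is linear, subtracting the two weak formulations of Definition \ref{Def-strong-sol-main} gives, for every $\phi\in V$, an equation for $(y,\phi)_V$ with datum $y_0=y_0^1-y_0^2$, force $U=U_1-U_2$ and diffusion $G(\cdot,y_1)-G(\cdot,y_2)$; the nonlinearities are linearised in $y$ by the add-and-subtract identity, for instance $(y_1\cdot\nabla)v(y_1)-(y_2\cdot\nabla)v(y_2)=(y\cdot\nabla)v(y_1)+(y_2\cdot\nabla)v(y)$, and likewise for the grade-two and grade-three terms. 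Applying the It\^o formula to $t\mapsto\Vert y(t)\Vert_V^2=(v(y),y)$ on $[0,\tau]$ with $\tau:=\tau_M^1\wedge\tau_M^2$ produces $\Vert y(t)\Vert_V^2=\Vert y_0\Vert_V^2+\int_0^t b\,ds+M(t)$, where, since the $V$-inner product pairs through $v$, the drift $b$ is the difference of the nonlinear terms and of $U$ tested against $y$ (not against $v(y)$), $M$ is a martingale with $d\langle M\rangle=4\sum_\k(\sigma_\k(\cdot,y_1)-\sigma_\k(\cdot,y_2),y)^2\,ds$, and the It\^o correction $\sum_\k\Vert\sigma_\k(\cdot,y_1)-\sigma_\k(\cdot,y_2)\Vert^2$ is absorbed into $b$.

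The decisive step is the drift estimate, and here the stopping time is essential: on $[0,\tau]$ one has $\Vert y_i\Vert_{W^{2,4}}\le M$, hence $\Vert v(y_i)\Vert_{L^4}\le CM$. The convective cross-term built on the background $y_1$ is then harmless, since after integration by parts $b(y,v(y_1),y)=-b(y,y,v(y_1))\le\Vert y\Vert_{L^4}\Vert\nabla y\Vert_{2}\Vert v(y_1)\Vert_{L^4}\le C(M)\Vert y\Vert_V^2$ using $H^1(D)\hookrightarrow L^4(D)$ in $d\le3$. The genuinely delicate contributions are the top-order ones: the piece $(y_2\cdot\nabla)v(y)$ tested against $y$ reduces to $\alpha_1 b(y_2,y,\Delta y)$, which is of $H^2$-order and is not controlled by $\Vert y\Vert_V$ on its own; it must be combined with the viscous term $\nu\Delta y$ and with the differences of the grade-two term $(\alpha_1+\alpha_2)\,\text{div}(A^2)$ and the grade-three term $\beta\,\text{div}(|A|^2A)$, so that the thermodynamic constraint \eqref{condition1} yields a dissipative combination absorbing them; for the cubic term one first bounds $|A(y_1)|^2A(y_1)-|A(y_2)|^2A(y_2)\le C(|A(y_1)|^2+|A(y_2)|^2)|A(y)|$ pointwise and then uses $\Vert A(y_i)\Vert_{L^4}\le C(M)$. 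This balancing of the highest-order terms via \eqref{condition1}, exactly the mechanism of \cite[Lem.~5.1]{yas-fer-2}, is the main obstacle. The noise is controlled by the Lipschitz hypotheses: \eqref{noise1} handles the $L^2$-part and \eqref{noiseV} the gradient part of $G(\cdot,y_1)-G(\cdot,y_2)$, giving $C(L,M)\Vert y\Vert_V^2$ for both the It\^o correction and $\langle M\rangle$; the force obeys $(U,y)\le\tfrac12\Vert U\Vert_2^2+\tfrac12\Vert y\Vert_V^2$.

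Once $b\le C(M,L)\Vert y\Vert_V^2+C\Vert U\Vert_2^2$ is established, I pass to the $p$-th moment. Raising the identity to the power $p/2$, taking the supremum over $[0,t\wedge\tau]$ and then expectations, and using $(a+b+c)^{p/2}\le C(a^{p/2}+b^{p/2}+c^{p/2})$, Jensen's inequality converts $\big(\int_0^t b\,ds\big)^{p/2}$ into $C(T,p)\int_0^t(\Vert y\Vert_V^p+\Vert U\Vert_2^p)\,ds$. For the martingale, the Burkholder-Davis-Gundy inequality bounds $\E\sup|M|^{p/2}$ by $C\,\E\langle M\rangle^{p/4}$, and splitting $\langle M\rangle^{p/4}\le C(\sup_{[0,t]}\Vert y\Vert_V^2)^{p/4}\big(\int_0^t\Vert y\Vert_V^2\,ds\big)^{p/4}$ followed by Young's inequality lets me absorb $\epsilon\,\E\sup_{[0,t\wedge\tau]}\Vert y\Vert_V^p$ into the left-hand side. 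What is left is a Gronwall inequality for $t\mapsto\E\sup_{[0,t\wedge\tau]}\Vert y\Vert_V^p$, and integrating it yields exactly the asserted bound with constant $C(M,L,T,p)$.
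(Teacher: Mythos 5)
Your argument follows the paper's route exactly: the paper obtains the pathwise inequality \eqref{Stab-V-new}, namely $\Vert y(s)\Vert_V^2\le\Vert y_0\Vert_V^2+M_0\int_0^s(\Vert y_1\Vert_{W^{2,4}}+\Vert y_2\Vert_{W^{2,4}}+1)\Vert y\Vert_V^2\,dr+\int_0^s\Vert U\Vert_2^2\,dr+2\int_0^s(G(\cdot,y_1)-G(\cdot,y_2),y)\,d\mathcal{W}$, by citing \cite[Lem.~5.1]{yas-fer-2} (whose mechanism you sketch), and then, exactly as you propose, raises it to the $p$-th power, controls the martingale via Burkholder--Davis--Gundy, and closes with Gronwall on $[0,\tau_M^1\wedge\tau_M^2]$. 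The one imprecision in your sketch of the drift estimate is the cubic term: H\"older with $\Vert A(y_i)\Vert_{L^4}\le C(M)$ leaves an uncontrolled factor $\Vert A(y)\Vert_{L^4}$, which is not dominated by $\Vert y\Vert_V$; you should instead use $\Vert A(y_i)\Vert_{L^\infty}\le CM$ from $W^{2,4}(D)\hookrightarrow W^{1,\infty}(D)$ (or the monotonicity of $a\mapsto|a|^2a$), after which the Gronwall loop closes as claimed.
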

\begin{proof}
	 Let us consider $1\leq p< \infty$ and $t\in[0,T]$.
	For any 
	$s\in [0,t\wedge\tau_M^1\wedge \tau_M^2]$,  from the proof of \cite[Lemma 5.1]{yas-fer-2},  there exists $M_0>0$ such that 
\begin{align}\label{Stab-V-new}
\Vert y(s)\Vert_V^2+4\nu \int_0^s\Vert \mathbb{D}y\Vert_{2}^2dr&\leq \Vert y_0\Vert_{V}^2+M_0\int_0^s(\Vert y_1\Vert_{W^{2,4}}+\Vert y_2\Vert_{W^{2,4}}+1)\Vert  y\Vert_{V}^2dr+\int_0^s\Vert U_1-U_2\Vert_{2}^2dr\nonumber\\
&\qquad+2\int_0^s(G(\cdot,y_1)-G(\cdot,y_2), y)d\mathcal{W}.
\end{align}
Therefore, we have
\begin{align}
\label{7:1:23}
\Vert y(s)\Vert_V^{2p}&\leq C(p,\tau_M^1\wedge \tau_M^2)\big\{ \Vert y_0\Vert_{V}^{2p}+M_0\int_0^s(\Vert y_1\Vert_{W^{2,4}}+\Vert y_2\Vert_{W^{2,4}}+1)^p\Vert  y\Vert_{V}^{2p}dr
+\int_0^s\Vert U\Vert_{2}^{2p}dr\nonumber\\
&\qquad+\vert\int_0^s(G(\cdot,y_1)-G(\cdot,y_2), y)d\mathcal{W}\vert^p\big\},
\quad \forall s\in [0,t\wedge\tau_M^1\wedge \tau_M^2].
\end{align}
Using Burkholder-Davis-Gundy inequality, 	we		infer that
\begin{align*}
\E\sup_{s\in [0,  t\wedge\tau_M^1\wedge\tau_M^2]}\Vert y(s)\Vert_V^{2p}&\leq C(p,L,T)\{  \E\Vert y_0\Vert_{V}^{2p}+\E\int_0^{ t\wedge\tau_M^1\wedge\tau_M^2}\Vert U\Vert_{2}^{2p}ds\\&\qquad+M_0(2M+1)^p\int_0^{ t}
\E\sup_{s\in [0,  r\wedge\tau_M^1\wedge\tau_M^2]}\Vert y(s)\Vert_V^{2p}dr\}.
\end{align*}
Finally, Gronwall's inequality ensures  the result of Lemma \ref{Lemma-V-stability}.
\end{proof}
In order to  derive the necessary optimality condition, we need 
 a stability result with respect to $W$-norm. This is the aim of the next proposition.



\begin{prop}\label{local-stability-thm}

	There exists  $K(M,\epsilon,p)>0$, which depends only on the data such	that 
\begin{align}\label{stability-estimate}
\E\sup_{s\in [0, \tau_M^1\wedge\tau_M^2]}\Vert (y_1-y_2)(s)\Vert_{W}^{2p}&\leq K(M,\epsilon) \big\{  \E\Vert y_0^1-y_0^2\Vert_W^{2p}+\E\int_0^{\tau_M^1\wedge\tau_M^2}\Vert U_1-U_2\Vert_{2}^{2p} ds\\
&\qquad+ \Vert y_1\Vert_{L^{2p(d+1+\epsilon)}(\Omega\times(0,\tau_M^1\wedge\tau_M^2);(H^3)^d)}^{2p(d+\epsilon)}\Vert y\Vert_{L^{2p(d+1+\epsilon)}(\Omega\times(0,\tau_M^1\wedge\tau_M^2);V)}^{2p}\big\},\nonumber
\end{align}
where $p\in [1,\infty[$ and $\epsilon\in ]0,1]$.
\end{prop}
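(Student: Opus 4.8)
The plan is to reduce the $W$-estimate to a bound on $\mathbb{P}v(y)$, where $y=y_1-y_2$: since $\Vert y\Vert_W^2=\Vert y\Vert_V^2+\Vert\mathbb{P}v(y)\Vert_2^2$ and the $V$-part is already controlled by Lemma \ref{Lemma-V-stability} (whose right-hand side is dominated by the data appearing in \eqref{stability-estimate}), it suffices to estimate $\Vert\mathbb{P}v(y)\Vert_2^{2p}$. First I would subtract the two copies of \eqref{I} written in the $v(y_i)$-formulation to obtain the evolution equation for $v(y)=v(y_1)-v(y_2)$, and apply the It\^o formula to the $H$-valued process $\mathbb{P}v(y)$, noting that the pressure terms drop out because $\mathbb{P}\nabla\mathbf{P}_i=0$. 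This yields a pathwise inequality for $\Vert\mathbb{P}v(y)(s)\Vert_2^2$ analogous to \eqref{Stab-V-new}, which I would then raise to the power $p$ exactly as in the passage from \eqref{Stab-V-new} to \eqref{7:1:23}. The terms to control are: the viscous term (handled via $\Delta y=\alpha_1^{-1}(y-v(y))$, producing a favorable dissipative sign), four families of nonlinear differences coming from the convection $(y\cdot\nabla)v(y)$, from $\sum_j v^j(y)\nabla y^j$, from the quadratic $(\alpha_1+\alpha_2)$-term $\mathrm{div}(A^2)$ and from the cubic $\beta$-term $\mathrm{div}(|A|^2A)$, an It\^o correction, and a martingale term.

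The heart of the argument is the estimation of the nonlinear differences tested against $\mathbb{P}v(y)$. For the convective part I would use the decomposition $v(y)=\mathbb{P}v(y)+\nabla\varphi$ together with the antisymmetry $b(\phi,z,z)=0$ for $\phi\in V$ to annihilate the leading contribution $b(y_2,\mathbb{P}v(y),\mathbb{P}v(y))=0$, and control the residual pressure piece through the regularity of the modified Stokes problem \eqref{Stokes}. Every remaining term I would bound by H\"older's inequality together with the Sobolev embeddings valid for $d=2,3$ (notably $H^2\hookrightarrow L^\infty$ and $(H^3(D))^d\hookrightarrow(W^{2,4}(D))^d$) and Gagliardo--Nirenberg interpolation between $\Vert y\Vert_V$ and $\Vert y\Vert_W$. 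The crucial structural observation is that each term splits into two types: those whose coefficient is a $W^{2,4}$-norm of $y_1$ or $y_2$, which on $[0,\tau_M^1\wedge\tau_M^2]$ is bounded by $M$ and thus feeds a Gronwall coefficient of size $(2M+1)$, and those carrying a genuine third-order derivative of $y_1$, which cannot be bounded pointwise by $M$ because the stopping time only controls the $W^{2,4}$-norm.

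For this second, critical type I would apply Young's inequality to separate a contribution proportional to $\Vert y\Vert_W^{2p}$ (sent back into the Gronwall loop) from a source contribution of the schematic form $\int_\Omega\int_0^{\tau_M^1\wedge\tau_M^2}\Vert y_1\Vert_{(H^3(D))^d}^{2p(d+\epsilon)}\Vert y\Vert_V^{2p}\,ds$. Applying H\"older in $(\omega,s)$ with the conjugate exponents $\frac{d+1+\epsilon}{d+\epsilon}$ and $d+1+\epsilon$ then produces precisely the product norm $\Vert y_1\Vert_{L^{2p(d+1+\epsilon)}(\Omega\times(0,\tau_M^1\wedge\tau_M^2);(H^3)^d)}^{2p(d+\epsilon)}\Vert y\Vert_{L^{2p(d+1+\epsilon)}(\Omega\times(0,\tau_M^1\wedge\tau_M^2);V)}^{2p}$ of \eqref{stability-estimate}; the slack encoded by $\epsilon\in\,]0,1]$ is what keeps the interpolation and embedding exponents subcritical. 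Finiteness of the $y_1$-factor is guaranteed by the higher-order a priori bound $\E\sup_{[0,\tau_M]}\Vert y\Vert_{\widetilde{W}}^{p}$ of Proposition \ref{Estimate-y-proposition}, recalling $\widetilde{W}\hookrightarrow(H^3(D))^d$, while the $y$-factor is finite by Lemma \ref{Lemma-V-stability}.

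Finally, I would treat the stochastic terms: the It\^o correction $\tfrac12\sum_{\k}\Vert\mathbb{P}(\sigma_\k(\cdot,y_1)-\sigma_\k(\cdot,y_2))\Vert_2^2$ is dominated by $C\Vert y\Vert_V^2$ via the Lipschitz assumption \eqref{noise1} and the $(L^2(D))^d$-boundedness of $\mathbb{P}$, so it is a harmless Gronwall term; the martingale term is handled, after taking $\E\sup_{s\in[0,t\wedge\tau_M^1\wedge\tau_M^2]}$, by the Burkholder--Davis--Gundy inequality, with Young's inequality used to absorb the resulting $\E\sup_s\Vert\mathbb{P}v(y)\Vert_2^{2p}$ into the left-hand side. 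Collecting everything and invoking the deterministic Gronwall inequality with the bounded coefficient $(2M+1)^pT$ closes the argument and produces $K(M,\epsilon,p)$. I expect the main obstacle to be exactly the third-order terms of the second type: since $\tau_M$ controls only $\Vert y_i\Vert_{W^{2,4}}$ and not $\Vert y_i\Vert_{H^3}$, these cannot remain inside Gronwall, and the delicate bookkeeping of the Young and H\"older exponents—forced to be $d+1+\epsilon$ and $d+\epsilon$—is what renders the right-hand side of \eqref{stability-estimate} finite while staying compatible with the available a priori and $V$-stability bounds.
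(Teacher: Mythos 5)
Your proposal is correct and follows essentially the same route as the paper's proof: splitting $\Vert y\Vert_W^2$ into the $V$-part (controlled by Lemma \ref{Lemma-V-stability}) and $\Vert\mathbb{P}v(y)\Vert_2^2$, applying It\^o to the projected equation, isolating the single critical term $\Vert y\Vert_\infty\Vert y_1\Vert_{H^3}\Vert y\Vert_{H^2}$ that escapes the $W^{2,4}$-bound given by $\tau_M$, and treating it by the interpolation/H\"older argument with exponents $d+\epsilon$ and $d+1+\epsilon$ (the paper's Lemma \ref{interpolation-estimate-lem}), before BDG and Gronwall. No substantive differences.
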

\begin{proof}
	For any $t\in [0,\tau_M^1\wedge \tau_M^2]$,	we have 
	\begin{align}\label{Ito-No-appro}
	v(y(t))-v(y_0)=&-\int_0^t\nabla (p_1-p_2)ds+\nu \int_0^t\Delta y-[(y_1\cdot \nabla)v(y_1)-(y_2\cdot \nabla)v(y_2)]ds\\
	&\hspace*{-2.5cm}-\displaystyle\sum_{j=1}^d\int_0^t[v(y_1)^j\nabla y_1^j-v(y_2)^j\nabla y_2^j]ds+(\alpha_1+\alpha_2)\int_0^t[\text{div}(A(y_1)^2)-\text{div}(A(y_2)^2)]ds
	\nonumber\\
	&\hspace*{-2.5cm}+\beta\int_0^t[\text{div}(|A(y_1)|^2A(y_1))-\text{div}(|A(y_2)|^2A(y_2))]ds+\int_0^tUds+ \int_0^t[G(\cdot,y_1)-G(\cdot,y_2)]d\mathcal{W}(s)\nonumber. 
	\end{align}
From	\eqref{Stab-V-new}	we	have
\begin{align}\label{stability-V-y1y2}
d\Vert y\Vert_V^2 +4\nu \Vert D(y)\Vert_2^2 dt 
\leq K(1+\Vert y_1\Vert_{W^{2,4}}^2+\Vert y_2\Vert_{W^{2,4}}^2)\Vert y\Vert_{W}^2dt
+2(G(\cdot,y_1)-G(\cdot,y_2), y)d\mathcal{W}+\Vert U\Vert_{2}^2 dt.
\end{align}	
	On	the	other	hand,	
	applying the operator $\mathbb{P}$ to \eqref{Ito-No-appro} and  using It\^o formula,
	 we derive
			\begin{align*}
	&d\Vert\mathbb{P}v(y)\Vert_{2}^2=2\nu\big( \Delta(y),\mathbb{P}v(y)\big)dt -2\big([(y_1\cdot \nabla)v(y_1)-(y_2\cdot \nabla)v(y_2)],\mathbb{P}v(y)\big)dt\\&\quad-2\big(\sum_{j}[v(y_1)^j\nabla y_1^j-\theta_v(y_2)^j\nabla y_2^j],\mathbb{P}v(y)\big)dt
+\big(\alpha_1+\alpha_2)([\text{div}(A(y_1)^2)-\text{div}(A(y_2)^2)],\mathbb{P}v(y)\big)dt\\
	&\quad+2\beta\big( [\text{div}(|A(y_1)|^2A(y_1))-\text{div}(|A(y_2)|^2A(y_2))],\mathbb{P}v(y)\big)dt+2\big(U,\mathbb{P}v(y)\big)dt\\
	&\quad+\sum_{ \k\geq 1}\Vert \mathbb{P}[\sigma_\k(\cdot,y_1)-\sigma_\k(\cdot,y_1)]\Vert_2^2dt+2\big(G(\cdot,y_1)-G(\cdot,y_2),\mathbb{P}v(y)\big)d\mathcal{W}. 
	\end{align*}
	Let us  estimate the terms in the right hand side of this equation. First, note	that
	\begin{align*}
	2\nu\big( \mathbb{P}\Delta(y),\mathbb{P}v(y)\big)
	\leq \dfrac{-2\nu}{\alpha_1}\Vert\mathbb{P}v(y)\Vert_{2}^2+\dfrac{2\nu}{\alpha_1}\Vert y\Vert_W^2
\quad\text{and}\quad 	2\big(U,\mathbb{P}v(y)\big) \leq \Vert U\Vert_2^2+\Vert y\Vert_{H^2}^2.
	\end{align*}
	The properties of the projection $\mathbb{P}$ and \eqref{noise1} give
	$\sum_{ \k\geq 1}\Vert  \mathbb{P}[\sigma_\k(\cdot,y_1)-\sigma_\k(\cdot,y_1)]\Vert_2^2 \leq L \Vert  y_1-y_2\Vert_{2}^2.
	$	Next, Sobolev embedding theorem (see e.g \cite[Ch. 1]{Roubicek}) and \cite[Lem. 5]{Bus-Ift-2}  ensure that		\begin{align*}
	&\big(\{(y_1\cdot \nabla)v(y_1)-(y_2\cdot \nabla)v(y_2)\},\mathbb{P}v(y)\big)=[b(y,v(y_1),\mathbb{P}v(y))+b(y_2,v(y)-\mathbb{P}v(y),\mathbb{P}v(y))]\\
	&\leq \Vert y\Vert_\infty \Vert y_1\Vert_{H^3}\Vert y\Vert_{H^2}+C\Vert y_2\Vert_{\infty}\Vert y\Vert_{H^2}\Vert y\Vert_{H^2}  \leq \Vert y\Vert_\infty \Vert y_1\Vert_{H^3}\Vert y\Vert_{H^2}+C\Vert y_2\Vert_{W^{2,4}}\Vert y\Vert_{W}^2,
\\
&\text{and	}\\
&\sum_{j=1}^d\big([v(y_1)^j\nabla y_1^j-v(y_2)^j\nabla y_2^j],\mathbb{P}v(y)\big)=b(\mathbb{P}v(y),y_2,v(y))+b(\mathbb{P}v(y),y,v(y_1))\\
	&\quad\leq \Vert y\Vert_{H^2}\Vert y_2\Vert_{W^{1,\infty}}\Vert y\Vert_{H^2}+\Vert y\Vert_{H^2}\Vert y\Vert_{W^{1,4}}\Vert y_1\Vert_{W^{2,4}}\leq K(\Vert y_1\Vert_{W^{2,4}}+\Vert y_2\Vert_{W^{2,4}})\Vert y_1-y_2\Vert_{W}^2.
	\end{align*}
	On the other hand,  using the embeddings	$H^2(D) \hookrightarrow W^{1,4}(D)$, 	
$	W^{2,4}(D)\hookrightarrow	W^{1,\infty}(D)$, we show the existence of  
	$K>0$	such	that
	\begin{align*}
	&\big(\text{div}(A(y_1)^2)-\big(\text{div}(A(y_2)^2),\mathbb{P}v(y)\big)=\big( \text{ div}(A(y) A(y_1))+\text{ div}(A(y_2)A(y)),\mathbb{P}v(y))\big)\\
	&\leq K(\Vert y_1\Vert_{W^{2,4}}+\Vert y_2\Vert_{W^{2,4}})\Vert y\Vert_{W}^2.
	\end{align*}

The H\"older inequality and the embeeding	$W^{2,4}(D)\hookrightarrow	W^{1,\infty}(D)$ yield
 	\begin{align*}
	&\big(\text{div}(|A(y_1)|^2A(y_1))-\text{div}(|A(y_2)|^2A(y_2)),\mathbb{P}v(y)\big)\\
	&=\big(\text{div}(|A(y_1)|^2A(y)),\mathbb{P}v(y)\big)+\big(\text{div}([A(y_1):A(y)+A(y):A(y_2)]A(y_2)),\mathbb{P}v(y)\big)\\
		&\leq K(\Vert y_1\Vert_{W^{2,4}}^2+\Vert y_2\Vert_{W^{2,4}}^2)\Vert y\Vert_{W}^2.	\end{align*}	Hence
		\begin{align}\label{stability-W-eqn}
	\qquad&d\Vert\mathbb{P}v(y)\Vert_{2}^2+\dfrac{2\nu}{\alpha_1}\Vert\mathbb{P}v(y)\Vert_{2}^2dt\leq K(1+\Vert y_1\Vert_{W^{2,4}}^2+\Vert y_2\Vert_{W^{2,4}}^2)\Vert y\Vert_{W}^2 \nonumber\\
	&\qquad+2(G(\cdot,y_1)-G(\cdot,y_2), \mathbb{P}v(y))d\mathcal{W}+\Vert U\Vert_{2}^2 dt+ K\Vert y\Vert_\infty \Vert y_1\Vert_{H^3}\Vert y\Vert_{H^2} dt.
	\end{align}
	Gathering \eqref{stability-V-y1y2} and \eqref{stability-W-eqn},
	for any $t\in [0, \tau_M^1\wedge\tau_M^2]$, we deduce the following relation
	\begin{align}\label{take-p-power}
	&\Vert y(t)\Vert_{W}^2+\min(1,\dfrac{1}{2\alpha_1})4\nu \int_0^t\Vert y\Vert_W^2 ds
		\leq \Vert y_0\Vert_W^2+ K\int_0^t(1+\Vert y_1\Vert_{W^{2,4}}^2+\Vert y_2\Vert_{W^{2,4}}^2)\Vert y\Vert_{W}^2ds\nonumber\\
		&+2\vert \int_0^t(G(\cdot,y_1)-G(\cdot,y_2), \mathbb{P}v(y)+y)d\mathcal{W}\vert +2\int_0^t\Vert U\Vert_{2}^2 ds+K\int_0^t\Vert y\Vert_\infty \Vert y_1\Vert_{H^3}\Vert y\Vert_{H^2} ds.
	\end{align}
	Let $p\geq 1$, thanks to the Burkholder-Davis-Gundy inequality, for any $\delta>0$, we have
	\begin{align*}
2\E\sup_{s\in [0, t\wedge\tau_M^1\wedge\tau_M^2]}
&\Big\vert \int_0^s (G(\cdot,y_1)-G(\cdot,y_1), y+\mathbb{P}v(y))d\mathcal{W}\Big\vert^p
\\ &\leq \delta\E\sup_{s\in [0, t\wedge\tau_M^1\wedge\tau_M^2]}\Vert y\Vert_2^{2p}+\delta\E\sup_{s\in [0,t\wedge \tau_M^1\wedge\tau_M^2]}\Vert \mathbb{P}v(y)\Vert_2^{2p}+C_\delta\E\int_0^{t\wedge\tau_M^1\wedge\tau_M^2}\Vert y\Vert_2^{2p}dr,
\end{align*}

	Taking the $p^{th}$ power of \eqref{take-p-power} and the expectation, we write
		\begin{align*}
	&\E\sup_{s\in [0, t\wedge\tau_M^1\wedge\tau_M^2]}\Vert y(s)\Vert_{W}^{2p}\leq  \E\Vert y_0\Vert_W^{2p}+K(\delta)\E\int_0^{t\wedge\tau_M^1\wedge\tau_M^2}(1+\Vert y_1\Vert_{W^{2,4}}^2+\Vert y_2\Vert_{W^{2,4}}^2)^p\Vert y\Vert_{W}^{2p}ds\\
	&\quad+\delta\E\sup_{s\in [0,t\wedge \tau_M^1\wedge\tau_M^2]}\Vert y\Vert_W^{2p}+K(\delta)\E\int_0^{\tau_M^1\wedge\tau_M^2}\Vert U\Vert_{2}^{2p} ds+K\E\int_0^{t\wedge\tau_M^1\wedge\tau_M^2}(\Vert y\Vert_\infty \Vert y_1\Vert_{H^3}\Vert y\Vert_{H^2})^p ds.\nonumber
	\end{align*}
		Since $\Vert y_{i}(t)\Vert_{W^{2,4}}^{i=1,2} \leq M,\forall	t\in[0, \tau_M^1\wedge\tau_M^2]$. An  appropriate choice of $\delta$  and  Lemma \ref{interpolation-estimate-lem}	ensure
	\begin{align*}
&\E\sup_{s\in [0, t\wedge\tau_M^1\wedge\tau_M^2]}\Vert y(s)\Vert_{W}^{2p}\leq  \E\Vert y_0\Vert_W^{2p}+K(\delta,M,\epsilon)\E\int_0^{t\wedge\tau_M^1\wedge\tau_M^2}\Vert y\Vert_{W}^{2p}ds\\
&+K(\delta)\E\int_0^{\tau_M^1\wedge\tau_M^2}\Vert U\Vert_{2}^{2p} ds+ K(\epsilon)\Vert y_1\Vert_{L^{2p(d+1+\epsilon)}(\Omega\times(0,\tau_M^1\wedge\tau_M^2);(H^3)^d)}^{2p(d+\epsilon)}\Vert y\Vert_{L^{2p(d+1+\epsilon)}(\Omega\times(0,\tau_M^1\wedge\tau_M^2);V)}^{2p}.
\end{align*}
	Therefore, Gr\"onwall's lemma yields	\eqref{stability-estimate}.
\end{proof}
As a consequence of Proposition \ref{local-stability-thm} and Lemma \ref{Lemma-V-stability},  we  establish the following corollary:
\begin{cor}\label{cor-rest-0}
 	 Consider $U$ and $y_0$ satisfying \eqref{data-assumptions} and $\psi\in L^p((\Omega_T, \mathcal{P}_T);(H^1(D)) ^d)$. Defining
	$ U_\rho=U+\rho\psi, \quad \rho \in (0,1),$
	let $(y,\tau_M)$ and $(y_\rho,\tau_M^\rho)$ be  the solutions of \eqref{I} associated with $(U,y_0)$ and $(U_\rho,y_0)$, respectively. Then,  there exist $C,K >0$ such that 
	\begin{align*}
	\E\sup_{s\in [0, \tau_M\wedge\tau_M^\rho]}\Vert y_\rho(s)-y(s)\Vert_V^p
	&\leq C(M,L,T,p)\rho^p\E\int_0^{\tau_M\wedge\tau_M^\rho}\Vert \psi(s)\Vert_{2}^pds\leq C_\psi(M)\rho^p  , \quad \text{  for } p\geq 2,\\
	\E\sup_{s\in [0, \tau_M\wedge\tau_M^\rho]}\Vert (y_\rho-y)(s)\Vert_{W}^{q}&\leq K(M,\epsilon) \big\{\rho^{q}\E\int_0^{\tau_M\wedge\tau_M^\rho}\Vert \psi\Vert_{2}^{q} ds+ \Vert y\Vert_{L^{q_d}(\Omega_\tau;(H^3)^d)}^{q(d+\epsilon)}\Vert y_\rho-y\Vert_{L^{q_d}(\Omega_\tau;V)}^{q}\big\},
	\end{align*}
	for any $q\in [2,\infty[$ and $\epsilon\in ]0,1]$. In particular, for $q=2+\epsilon,$ we have
	\begin{align*}		
		\E\sup_{s\in [0, \tau_M\wedge\tau_M^\rho]}\Vert (y_\rho-y)(s)\Vert_{W}^{2+\epsilon}&\leq K(M,\epsilon) \big\{\rho^{2+\epsilon}\E\int_0^{\tau_M\wedge\tau_M^\rho}\Vert \psi\Vert_{2}^{2+\epsilon} ds+ C\Vert y_\rho-y\Vert_{L^{(2+\epsilon)(d+1+\epsilon)}(\Omega_\tau;V)}^{2+\epsilon}\big\}\\
			&\leq K(M,\epsilon)\rho^{2+\epsilon}	\text{	where	}	\Omega_\tau=\Omega\times(0,\tau_M\wedge\tau_M^\rho),\;	q_d=q(d+1+\epsilon).
	\end{align*}
\end{cor}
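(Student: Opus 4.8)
The plan is to read the corollary off directly from the two stability statements already proved in Section \ref{Sec3}, applied to the pair of solutions $(y,\tau_M)$ and $(y_\rho,\tau_M^\rho)$, which solve \eqref{I} with the \emph{same} initial datum $y_0$ but with forces $U$ and $U_\rho=U+\rho\psi$. Thus, in the notation of Lemma \ref{Lemma-V-stability} and Proposition \ref{local-stability-thm}, the difference of initial data vanishes, $y_0^1-y_0^2=0$, while the difference of forces is $U_1-U_2=\pm\rho\psi$. For the $V$-estimate I would invoke Lemma \ref{Lemma-V-stability} with these identifications: the term $\E\Vert y_0^1-y_0^2\Vert_V^p$ drops out, and $\Vert U_1-U_2\Vert_2^p=\rho^p\Vert\psi\Vert_2^p$ factors out the power $\rho^p$, which gives the first displayed inequality. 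The bound $\leq C_\psi(M)\rho^p$ then follows from $\int_0^{\tau_M\wedge\tau_M^\rho}\leq\int_0^T$ together with $\psi\in L^p(\Omega_T;(H^1(D))^d)$ and the embedding $H^1(D)\hookrightarrow L^2(D)$, which make $\E\int_0^T\Vert\psi\Vert_2^p\,ds$ finite.

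Next, the $W$-estimate for general $q\in[2,\infty[$ is obtained by applying Proposition \ref{local-stability-thm} with its exponent $2p$ set equal to $q$ and the same substitutions, taking $y_1=y$ and $y_2=y_\rho$ so that the prefactor $\Vert y_1\Vert_{(H^3)^d}$ becomes $\Vert y\Vert_{(H^3)^d}$. The initial-data term again vanishes, the force term produces $\rho^q\,\E\int_0^{\tau_M\wedge\tau_M^\rho}\Vert\psi\Vert_2^q\,ds$, and the mixed product term is carried over verbatim with $q_d=q(d+1+\epsilon)$, yielding the stated inequality.

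Finally, for the special case $q=2+\epsilon$ I would show that each term on the right is $O(\rho^{2+\epsilon})$. The force term is controlled as above since $\psi\in L^p\subset L^{2+\epsilon}$ on the finite measure space $\Omega_T$. For the mixed product term, the factor $\Vert y\Vert_{L^{q_d}(\Omega_\tau;(H^3)^d)}$ is bounded by a constant $C(M)$ using Proposition \ref{Estimate-y-proposition}: the $\widetilde{W}$-norm controls the $(H^3)^d$-norm and $\E\sup_{[0,\tau_M]}\Vert y\Vert_{\widetilde{W}}^r<\infty$ for every $r>2$, whence $\Vert y\Vert_{L^{q_d}(\Omega_\tau;(H^3)^d)}^{q_d}\leq T\,\E\sup_{[0,\tau_M]}\Vert y\Vert_{(H^3)^d}^{q_d}\leq C(M)$; this constant is absorbed into $K(M,\epsilon)$ and $C$. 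The remaining factor is estimated by the $V$-bound of the first part taken at the exponent $q_d$, giving $\Vert y_\rho-y\Vert_{L^{q_d}(\Omega_\tau;V)}\leq C\rho\,\Vert\psi\Vert_{L^{q_d}(\Omega_T;L^2)}$, so that the product term is $\leq K(M,\epsilon)\rho^{2+\epsilon}$. The main obstacle is precisely this last step: invoking Lemma \ref{Lemma-V-stability} at the high power $q_d=(2+\epsilon)(d+1+\epsilon)$ requires $\psi\in L^{q_d}(\Omega_T;L^2)$, i.e.\ $q_d\leq p$. This is exactly where the standing hypothesis $p>2(d+1)$ is used: since $q_d\to 2(d+1)$ as $\epsilon\to 0^+$ and $p>2(d+1)$ strictly, one fixes $\epsilon$ small enough that $(2+\epsilon)(d+1+\epsilon)\leq p$, whence $\psi\in L^p(\Omega_T)\subset L^{q_d}(\Omega_T)$ and the $V$-estimate applies with power $q_d$. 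All remaining manipulations are direct substitutions into the already-established inequalities.
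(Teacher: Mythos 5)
Your proposal is correct and matches the paper's (implicit) argument: the corollary is obtained exactly by specializing Lemma \ref{Lemma-V-stability} and Proposition \ref{local-stability-thm} to $y_0^1=y_0^2=y_0$ and $U_1-U_2=\rho\psi$, then closing the $q=2+\epsilon$ case by feeding the $V$-estimate at power $q_d$ back into the mixed product term. Your remark that one must take $\epsilon$ small enough that $q_d=(2+\epsilon)(d+1+\epsilon)\leq p$ (which is possible precisely because $p>2(d+1)$) is the right bookkeeping and is exactly where the standing assumption on $p$ is used.
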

\section{The optimal control problem  and main results}\label{Sec4}
\subsection{Cost functional}
Our aim is to control the solution of the system \eqref{I} through a stochastic distributed force $U$, which belongs to the  admissible set $\mathcal{U}_{ad}^p$  defined as a  nonempty, bounded, closed and convex subset of $L^p((\Omega_T,\mathcal{P}_T);(H^1(D))^d)$, such that $0\in \mathcal{U}_{ad}^p$\footnote{For example, $\mathcal{U}_{ad}^p=\{ u \in L^p((\Omega_T,\mathcal{P}_T);(H^1(D))^d):=Y;\quad \Vert u\Vert_{Y} \leq K \}, \quad 0<K<\infty$. }. 

Let $(y(t,U))_{t\in [0,\tau^U_M)}$  be the  local pathwise solution of \eqref{I}. We recall that $ y:=y(t,U)=y_M(t,U)$,  $t\in [0,\tau_M^U]$, and   the stopping times $(\tau_M^U)_{M\in \mathbb{N}}$ depend on the control $U \in \mathcal{U}_{ad}^p$. Therefore, the solution $y(t,U)$ is defined up to a certain stopping time. Thus,  we introduce the cost functional $J_M:\mathcal{U}_{ad}^p\to \mathbb{R}$ defined by
\begin{align}\label{cost-exam}
J_M(U,y)=\dfrac{1}{2}\E\int_0^{\tau^U_M}\Vert y-y_d\Vert_2^2dt+ \dfrac{\lambda}{p}\E\int_0^T\Vert U\Vert_{(H^1(D))^d}^pdt,	\quad\lambda>0.
\end{align}
with a desired target field $y_d \in L^2(0,T;H)$\footnote{In	general,	$y_d$	is	smoother	with	respect	to	the	space	variables	and	satisfies	the	boundary	conditions.	Here,	we		assume	what	is	necessary	to	perform	the	analysis.}.
Our goal is  to solve the following problem
\begin{align}\label{problem}
\mathcal{P}\begin{cases}
\displaystyle\min_{U}
\{ J_M(U,y): \; U \in \mathcal{U}_{ad}^p \text{ and } y \text{ is the solution of \eqref{I} for the minimizing }  U  \}.
\end{cases}
\end{align}
In other words,  we intend to find $U_M^* \in \mathcal{U}_{ad}^p $ such that
 $J_M(U_M^*,y(U_M^*))= \displaystyle\min_{U\in \mathcal{U}_{ad}^p} J_M(U,y).$
\begin{remark}
		We wish to draw the reader’s attention to the fact that with the	inclusion	of		$\tau^U_M$	in	the	definition	of	the	cost	functional,	we	are	able	to	consider	2D	and	3D	domains.	On	the	other	hand,	even		if		$d=2$	and	$\beta\neq	0$,	we	do not 	know	if	$\tau^U_M=	T$.	In	the		case		$\beta=0$	and	$d=2$,	we	get	$\tau^U_M=	T$	and	we	can	recover	the	stochastic	optimal	control	problem	studied	in	\cite{Chem-Cip-2}.	Moreover,	in the two-dimensional deterministic case	\textit{i.e.}		$G\equiv	0$,	one	recovers	the	deterministic	optimal	control	problem	studied	in	\cite{yas-fer}	with	$\tau^U_M=	T.$
\end{remark}
\subsection{Main results}
Here, we present the main results of our article, which show the existence of a solution to the control problem and establish the first order optimality conditions. 
\begin{theorem}\label{main-thm} 
	Assume $\mathcal{H}_1$. Then the control problem \eqref{problem}  admits a unique   optimal solution 
	$(\widetilde{U_M},\tilde{y}) \in \mathcal{U}_{ad}^p \times L^p(\Omega;L^p(0,T;\widetilde{W})),$
	where $\tilde{y}:=y(\widetilde{U_M})$ is the unique solution of \eqref{I} with $U=\widetilde{U_M}$. Moreover, under the assumption $\mathcal{H}_0$: 	
	\begin{itemize} 
		\item  there exists a unique solution   $\tilde{z}$  of the linearized equations  \eqref{Linearized}, in 2D and 3D with $y=\tilde{y}$ and $\psi=\psi-\widetilde{U_M}$;
		\item if $d=2$,  there exists a unique solution  $(\tilde{\mathcalb{p}},\tilde{\mathcalb{q}})$ of the stochastic backward adjoint equation \eqref{adjoint}, with forces $g=\tilde{y}-y_d$;
		\item if $d=3$, there exists,  at least, one  solution   $(\tilde{\mathbf{p}},\tilde{\mathbf{q}})$ of the stochastic backward adjoint equation \eqref{adjoint3D}, with forces $g=\tilde{y}-y_d$.
	\end{itemize} In addition,    the duality property 
\begin{align}\label{duality-thm}
\E\int_0^{T}\mathbb{I}_{[0,\tau_M[}(\psi-\widetilde{U_M},\tilde{\mathcalb{p}})ds=\E
\int_0^T\hspace*{-0.25cm}\mathbb{I}_{[0,\tau_M[}(s)(y(\widetilde{U_M})-y_d,z)ds=	\E \int_0^T\hspace*{-0.25cm}\mathbb{I}_{[0,\tau_M[}(s)(\psi-\widetilde{U_M},\tilde{\mathbf{p}})dt
\end{align}
is valid for any  $\psi \in \mathcal{U}_{ad}^p$,  and the following optimality condition 	holds,  for any   $\psi \in \mathcal{U}_{ad}^p$
\begin{align}\label{oocc}
	&\E\int_0^T 	\big(\lambda\Vert \widetilde{U_M}\Vert_{(H^1(D))^2}^{p-2} (\widetilde{U_M},\psi-\widetilde{U_M})_{(H^1(D))^2}+ \mathbb{I}_{[0,\tau_M[}(\psi-\widetilde{U_M},\tilde{\mathcalb{p}})\big)ds\geq 0;\nonumber\\
	&\E\int_0^T 	\big(\lambda\Vert \widetilde{U_M}\Vert_{(H^1(D))^3}^{p-2} (\widetilde{U_M},\psi-\widetilde{U_M})_{(H^1(D))^3}+ \mathbb{I}_{[0,\tau_M[}(\psi-\widetilde{U_M},\tilde{\mathbf{p}})\big)ds\geq 0.
	\end{align}
\end{theorem}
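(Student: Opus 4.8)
The statement gathers the main conclusions of Sections \ref{Sec4}--\ref{Section-Optimality-condition}; accordingly the plan is to establish the existence and uniqueness of the optimal pair by a direct argument, to import the well-posedness of the linearized and adjoint systems from the subsequent sections, and finally to assemble the variational inequality from the G\^ateaux derivative of the state map and the duality relation.

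\emph{Existence and uniqueness of a minimizer.} I would proceed by the direct method. Since $p>2(d+1)>2$, the space $L^p((\Omega_T,\mathcal{P}_T);(H^1(D))^d)$ is reflexive and $\mathcal{U}_{ad}^p$ is bounded, convex and closed, hence weakly sequentially compact; given a minimizing sequence $(U_n)\subset\mathcal{U}_{ad}^p$ one extracts $U_n\rightharpoonup\widetilde{U_M}\in\mathcal{U}_{ad}^p$. The uniform bounds of Proposition \ref{Estimate-y-proposition} provide weak-$\ast$ limits of the states $y_n=y(U_n)$, while the stability estimates of Lemma \ref{Lemma-V-stability} and Proposition \ref{local-stability-thm} identify the limit with $\tilde y=y(\widetilde{U_M})$. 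The delicate issue is the behaviour of the control-dependent stopping times $\tau^{U_n}_M$ and the passage to the limit in $\E\int_0^{\tau^{U_n}_M}\Vert y_n-y_d\Vert_2^2\,dt$: exploiting the explicit form \eqref{stopping-time} and the $W^{2,4}$-control one obtains lower semicontinuity of the tracking term, the regularising term being weakly lower semicontinuous by convexity. Uniqueness of the optimal control then rests on the strict convexity in $U$ coming from the $L^p$-regularisation ($p>2$), the state being pinned down by the pathwise uniqueness of Theorem \ref{main-thm-exist}.

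\emph{Linearized and adjoint systems.} Under $\mathcal{H}_0$ the existence and uniqueness of $\tilde z$ solving \eqref{Linearized} in $d=2,3$ is precisely the content of Section \ref{Sec5}, and Section \ref{Sec6} shows that $\tilde z$ equals the G\^ateaux derivative of $U\mapsto y(U)$ at $\widetilde{U_M}$ in the direction $\psi-\widetilde{U_M}$. The solvability of the backward adjoint equation---uniqueness for $d=2$ in \eqref{adjoint}, and mere existence for $d=3$ in \eqref{adjoint3D}, the loss of uniqueness reflecting the absence of uniqueness for the $3$D forward equation---is established in Section \ref{Sec7}, and I would simply quote these results.

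\emph{Duality and optimality condition.} For a fixed $\psi\in\mathcal{U}_{ad}^p$ put $U_\rho=\widetilde{U_M}+\rho(\psi-\widetilde{U_M})$, $\rho\in(0,1)$; convexity gives $U_\rho\in\mathcal{U}_{ad}^p$, and optimality yields $\rho^{-1}\big(J_M(U_\rho,y_\rho)-J_M(\widetilde{U_M},\tilde y)\big)\geq 0$. Passing to the limit $\rho\to 0^+$, the differentiability of $\Vert\cdot\Vert_{(H^1(D))^d}^p$ produces the term $\lambda\,\E\int_0^T\Vert\widetilde{U_M}\Vert^{p-2}(\widetilde{U_M},\psi-\widetilde{U_M})_{(H^1(D))^d}\,ds$, while the G\^ateaux differentiability of the state map together with the rates of Corollary \ref{cor-rest-0} yields the tracking term $\E\int_0^{\tau_M}(\tilde y-y_d,\tilde z)\,ds$. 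Inserting the duality relation \eqref{duality-thm}, proved in Section \ref{Section-Optimality-condition}, rewrites this last integral as $\E\int_0^T\mathbb{I}_{[0,\tau_M[}(\psi-\widetilde{U_M},\tilde{\mathcalb{p}})\,ds$ (respectively with $\tilde{\mathbf{p}}$ when $d=3$), which is exactly \eqref{oocc}. The hardest point, both here and in the existence argument, is to differentiate the tracking functional through the moving stopping time $\tau_M^\rho$: one must show that the contribution of the random interval between $\tau_M^\rho$ and $\tau_M$ is $o(\rho)$, and it is precisely for this control of the stopping-time variation that the strong integrability $p>2(d+1)$ and the $W$-stability of Corollary \ref{cor-rest-0} are needed.
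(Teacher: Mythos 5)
Your treatment of the linearized and adjoint systems, of the variation of the cost functional through the moving stopping time, and of the final assembly of the optimality condition via the duality relation follows the paper's route (Sections \ref{Sec5}--\ref{Section-Optimality-condition}) and is fine. The genuine gap is in the first step, the existence and uniqueness of the optimal control. The paper stresses that \eqref{problem} is a \emph{non-convex} problem: both $U\mapsto y(U)$ and $U\mapsto\tau_M^U$ depend non-convexly on $U$, so the tracking term is not convex and the strict convexity of the $L^p$-regularisation does not transfer to $J_M$; your uniqueness argument via strict convexity therefore fails. Likewise, the direct method runs into two obstructions you do not resolve. First, all the stability results you invoke (Lemma \ref{Lemma-V-stability}, Proposition \ref{local-stability-thm}, Lemma \ref{Lemma-equality-stopping}) are stated for \emph{strong} convergence $\Vert U_1-U_2\Vert_X\to 0$; a minimizing sequence only gives weak convergence in $\mathcal{U}_{ad}^p$, and identifying the weak-$\ast$ limit of the states $y(U_n)$ with $y(\widetilde{U_M})$ for this strongly nonlinear SPDE would require a separate compactness/identification argument that is nowhere provided. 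Second, lower semicontinuity of $U\mapsto\E\int_0^{\tau_M^U}\Vert y(U)-y_d\Vert_2^2\,dt$ under weak convergence of controls is not established and does not follow from convexity.

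The paper circumvents both issues by a different mechanism: it first proves that $f_M(U)=\E\int_0^{\tau_M^U}\Vert y(U)-y_d\Vert_2^2\,dt$ is \emph{continuous} for the norm topology of $X=L^p((\Omega_T,\mathcal{P}_T);(H^1(D))^d)$ (Lemma \ref{lemma-continuous}, built on the stopping-time continuity of Lemma \ref{Lemma-equality-stopping}), and then applies the variational principle of \cite[Thm. 4]{Goebel} on the bounded closed convex subset $\mathcal{U}_{ad}^p$ of the uniformly convex space $X$: for a dense set of $v$ the perturbed functional $\frac{p}{2\lambda}f_M(U)+\Vert U-v\Vert_X^p$ has a unique minimizer $U_M(v)$ depending continuously on $v$, and letting $v_k\to 0$ (recall $0\in\mathcal{U}_{ad}^p$) produces the unique minimizer $\widetilde{U_M}$ of $J_M$. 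If you wish to keep a self-contained argument you must either reproduce this Goebel-type construction or supply the missing weak-convergence compactness together with a genuine uniqueness proof; as written, the existence--uniqueness step does not go through. A minor further point: the loss of uniqueness for the 3D adjoint state is caused by the boundary terms obstructing the $W$-estimate (Remark \ref{22_3}), not by any failure of uniqueness of the forward equation, which is unique in both 2D and 3D by Theorem \ref{main-thm-exist}.
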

\begin{proof}
The	proof	of	Theorem		\ref{main-thm}		results	from	the	combination	of	Theorem	\ref{thm-control},	Theorem	\ref{exis-z+estim},	Theorem	\ref{exis-adjoint+estim},	Theorem	\ref{exis-adjoint+estim3D}	and	Section	\ref{Section-Optimality-condition}.
\end{proof}
\begin{remark} In Theorem \ref{main-thm},
we did not distinguish between the notations in 2D and 3D  for  $z$, $\tilde{y}-y_d$, the control $U$ and $\psi$, since it is clear from the context.
\end{remark}

\subsection{Existence and uniqueness of optimal control}
Let us notice that  the velocity field and the  stopping times are non-convex with respect to the control $U$,  then \eqref{problem} is a  non-convex optimization problem. Our strategy 
to show the  existence and uniqueness of the optimal solution
relies on the lower semi-continuous 
of the cost functional and on the application of a result from \cite{Goebel}  (see Theorem \ref{thm-control}). 
First, let us prove the following results, which will play an important  role in the proof of existence of  optimal control associated with  the cost funtional \eqref{cost-exam}.

\begin{lemma}\label{Lemma-equality-stopping}
	For fixed $M \in \mathbb{N}$, let $(y_M(t;U))_{t\in [0,T]}$ be the solution to \eqref{I} in the sense of Definition \ref{Def-strong-sol-main} corresponding to the control $U\in L^p(\Omega_T,(H^1(D))^d)$ and the stopping time $\tau_M^U$. Then we have 
	$$\lim_{U_1 \to U_2} P(\tau_M^{U_1}\neq \tau_M^{U_2})=0,	\text{	where	} U_1\to U_2 \text{	means }\displaystyle\Vert U_1-U_2\Vert_{L^p(\Omega_T,(H^1(D))^d)}\to 0$$
 	\begin{align}\label{stop-time}
\text{	and	}\qquad	\tau_M^{U_i}=\tau_M^i=\inf\{ t\geq 0: \Vert  y_ i(t)\Vert_{W^{2,4}} \geq M\} \wedge T.
	\end{align}
\end{lemma}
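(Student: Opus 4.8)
The plan is to split the argument into two essentially independent parts: first upgrade the available stability estimates to strong convergence of the states in the very norm $\Vert\cdot\Vert_{W^{2,4}}$ that defines the stopping times, and then run a purely pathwise hitting-time comparison. Write $y_i=y_M(\cdot\,;U_i)$, $\tau^i=\tau_M^{U_i}$ and set $\Delta:=\sup_{s\in[0,\tau^1\wedge\tau^2]}\Vert y_1(s)-y_2(s)\Vert_{W^{2,4}}$. The difficulty in the first part is that Lemma \ref{Lemma-V-stability} only controls the difference in $V\sim H^1$, whereas the stopping times live at the $W^{2,4}$ level. I would bridge this by interpolation against the uniform regularity. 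With common initial datum, Lemma \ref{Lemma-V-stability} gives $\E\sup_{[0,\tau^1\wedge\tau^2]}\Vert y_1-y_2\Vert_V^{p}\le C\,\Vert U_1-U_2\Vert_{L^p(\Omega_T;(H^1(D))^d)}^{p}$, while Proposition \ref{Estimate-y-proposition} provides the uniform bound $\E\sup_{[0,\tau_M]}\Vert y_i\Vert_{\widetilde W}^{p}\le K$, i.e. a uniform $H^3$-bound. Since $\widetilde W\hookrightarrow H^3$ and $V\hookrightarrow H^1$, the Gagliardo--Nirenberg inequality of Lemma \ref{interpolation-estimate-lem} yields $\Vert f\Vert_{W^{2,4}}\le C\Vert f\Vert_{H^3}^{\theta}\Vert f\Vert_{H^1}^{1-\theta}$ for a suitable $\theta\in(0,1)$ admissible in both $d=2$ and $d=3$. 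Applying H\"older's inequality in $\Omega$ and pairing the uniform $H^3$-moment with the vanishing $H^1$-difference then gives $\E\,\Delta^{2}\to0$, hence $\Delta\to0$ in probability, as $\Vert U_1-U_2\Vert\to0$.

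For the second part I would argue trajectory-by-trajectory, using that $y_i(\cdot\wedge\tau^i)$ has continuous $W^{2,4}$-paths. On the event $\{\tau^1<\tau^2\}$ one has $\tau^1<T$, $\Vert y_1(\tau^1)\Vert_{W^{2,4}}=M$ and $\Vert y_2(\tau^1)\Vert_{W^{2,4}}<M$, and since $\tau^1\le\tau^1\wedge\tau^2$ the triangle inequality forces $M-\Vert y_2(\tau^1)\Vert_{W^{2,4}}\le\Delta$. Consequently, for any $\eta>0$,
\[
P(\tau^1\neq\tau^2)\le P(\Delta\ge\eta)+P\big(\{\tau^1\neq\tau^2\}\cap\{\Delta<\eta\}\big),
\]
where the first term vanishes as $U_1\to U_2$ by the first part (Chebyshev, fixed $\eta$). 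On the second event, treating the symmetric case $\tau^2<\tau^1$ identically, the limiting trajectory $t\mapsto\Vert y_2(t)\Vert_{W^{2,4}}$ is pinned in the collar $(M-\eta,M)$ at the crossing time while not yet having reached level $M$; in particular $y_2$ attains level $M-\eta$ strictly before its first passage to $M$, so the event is contained in $\{\tau^{M-\eta}(y_2)\le\tau^2\ \text{and the first contact of }\Vert y_2\Vert_{W^{2,4}}\text{ with }M\text{ is non-transversal}\}$.

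The conclusion then comes from sending $\eta\downarrow0$, and this is where the main obstacle lies. Pathwise, the exact first-hitting times of two close but distinct trajectories generically differ, so one cannot hope for $\tau^1=\tau^2$ on trajectories that cross the threshold $M$ transversally; the whole statement hinges on showing that the residual collar event has vanishing probability as $\eta\to0$. I would control it using the monotonicity and left-continuity of $\eta\mapsto\tau^{M-\eta}(y_2)$, which give the almost-sure monotone convergence $\tau^{M-\eta}(y_2)\uparrow\tau^{M}(y_2)=\tau^2$, together with the fact that, almost surely, the running maximum of $\Vert y_2(\cdot)\Vert_{W^{2,4}}$ has no tangential contact (no local maximum lying exactly at level $M$) at its first passage, equivalently that the law of $\sup_{[0,\tau^2]}\Vert y_2\Vert_{W^{2,4}}$ carries no atom at $M$. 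Granting this non-degeneracy at the fixed level $M$, the second probability tends to $0$ and Lemma \ref{Lemma-equality-stopping} follows; I expect the verification of this last no-tangency property to be the genuinely delicate step of the proof.
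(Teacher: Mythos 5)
Your first step is sound and is, in substance, the paper's: Chebyshev's inequality plus an interpolation that trades the $W^{2,4}$-norm of the difference for a uniformly bounded high norm and a vanishing low norm. The paper uses $\Vert\cdot\Vert_{W^{2,4}}^{4}\le C\Vert\cdot\Vert_{W}\,\Vert\cdot\Vert_{W^{2,6}}^{3}$ and therefore needs the $W$-stability of Proposition \ref{local-stability-thm}, whereas you interpolate $W^{2,4}$ between $H^{1}$ and $H^{3}$ and only invoke Lemma \ref{Lemma-V-stability} together with the uniform $\widetilde{W}$-moment of Proposition \ref{Estimate-y-proposition}; both routes give $\Delta\to0$ in probability (the exponent $\theta=7/8$ in $d=3$ is still compatible with $p>2(d+1)$ after H\"older in $\Omega$). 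Note only that the inequality you cite is the standard Gagliardo--Nirenberg estimate, not the content of Lemma \ref{interpolation-estimate-lem}, which interpolates $W^{1,d+\epsilon}$ between $H^{1}$ and $H^{2}$.

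The second step is where your argument does not close, and the gap is twofold. First, the claimed containment of $\{\tau^{1}\neq\tau^{2}\}\cap\{\Delta<\eta\}$ in a ``non-transversal first contact'' event is false: if $\Vert y_{2}(\cdot)\Vert_{W^{2,4}}$ crosses the level $M$ transversally at some $\tau^{2}<T$ and $y_{1}$ is a small perturbation of $y_{2}$, then generically $\tau^{1}$ is close to, but different from, $\tau^{2}$; the event $\{\tau^{1}\neq\tau^{2},\ \Delta<\eta\}$ then occurs with probability comparable to $P(\tau^{2}<T)$ for every $\eta>0$, and no tangency is involved. Sending $\eta\downarrow0$ therefore cannot eliminate the residual term through a no-atom or no-tangency property of the single path $y_{2}$. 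Second, even the non-degeneracy property you would need is left entirely unproven, and nothing of this kind appears in the paper. The paper's route is different: it argues by contradiction directly from the definition of the stopping times --- on $\{\tau^{1}>\tau^{2}\}$ one has $\Vert y_{2}(\tau^{2})\Vert_{W^{2,4}}\ge M>\Vert y_{1}(\tau^{2})\Vert_{W^{2,4}}$, so the strictly positive gap $\Vert y_{2}(\tau^{2})\Vert_{W^{2,4}}-\Vert y_{1}(\tau^{2})\Vert_{W^{2,4}}$ is dominated by $\Delta$, and a persistent positive probability of $\{\tau^{1}>\tau^{2}\}$ is played off against \eqref{contradiction-result}. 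You should follow (or repair) that argument rather than introduce the collar decomposition; as you yourself sense, the delicate point is the passage from ``the gap is strictly positive'' to ``the gap is bounded below by some $\delta_{0}>0$ on a set of non-vanishing probability uniformly as $U_{1}\to U_{2}$'', which is precisely where the paper's own proof is at its most compressed.
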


\begin{proof}
	Let us consider $\epsilon,\delta,\gamma >0$ and two solutions  
	$(y_M(t,U_i))_{t\in [0,T]}^{i=1,2}$ of equation \eqref{I}, corresponding to $(U_i)^{i=1,2}$,  respectively, with the same initial data $y_0$. 
	Standard computations give
	\begin{align*}
	P\big(\sup_{s\in [0, \tau_M^1\wedge\tau_M^2]} \Vert y_M(s,U_1)-y_M(s,U_2)\Vert_{W^{2,4}} \geq \delta\big) \leq \dfrac{1}{\delta^{4(1+\gamma)}}\E \sup_{s\in [0, \tau_M^1\wedge\tau_M^2]} \Vert y_M(s,U_1)-y_M(s,U_2)\Vert_{W^{2,4}}^{4(1+\gamma)}.
	\end{align*}
	On the one hand,  due to the H\"older inequality there exists $C>0$ such that
	\begin{align*}
	&\sup_{s\in [0, \tau_M^1\wedge\tau_M^2]}	\Vert y_M(s,U_1)-y_M(s,U_2)\Vert_{W^{2,4}}^{4(1+\gamma)} \\&\qquad\leq C \sup_{s\in [0, \tau_M^1\wedge\tau_M^2]}\Vert y_M(s,U_1)-y_M(s,U_2)\Vert_{W}^{1+\gamma}\cdot \sup_{s\in [0, \tau_M^1\wedge\tau_M^2]}\Vert y_M(s,U_1)-y_M(s,U_2)\Vert_{W^{2,6}}^{3(1+\gamma)}.
	\end{align*}
	We recall that $p>2(d+1)$. Therefore the Cauchy Schwarz inequality yields
	\begin{align*}
	&\E\sup_{s\in [0, \tau_M^1\wedge\tau_M^2]}\Vert y_M(s,U_1)-y_M(s,U_2)\Vert_{W^{2,4}}^{4(1+\gamma)} \\&\leq \big(\E\sup_{s\in [0, \tau_M^1\wedge\tau_M^2]}\Vert y_M(s,U_1)-y_M(s,U_2)\Vert_{W}^{2(1+\gamma)}\big)^{1/2}\cdot\big(\E\sup_{s\in [0, \tau_M^1\wedge\tau_M^2]}\Vert y_M(s,U_1)-y_M(s,U_2)\Vert_{W^{2,6}}^{6(1+\gamma)}\big)^{1/2}\\
	&\leq C(M,\epsilon)\sqrt{  \E\int_0^{\tau_M^1\wedge\tau_M^2}\Vert U_1-U_2\Vert_{2}^{2(1+\gamma)} ds+ C\Vert U_1-U_2\Vert_{L^{2(1+\gamma)(d+1+\epsilon)}(\Omega\times(0,\tau_M^1\wedge\tau_M^2);L^2)}^{2(1+\gamma)}}:=A_M(\epsilon,\gamma),
	\end{align*}
	where we used  Proposition \ref{local-stability-thm} and Proposition \ref{Estimate-y-proposition} to deduce the last inequality. Therefore
	\begin{align}\label{contradiction-result}
	P\big(\sup_{s\in [0, \tau_M^1\wedge\tau_M^2]} \Vert y_M(s,U_1)-y_M(s,U_2)\Vert_{W^{2,4}} \geq \delta\big) \leq \dfrac{A_M(\epsilon,\gamma)}{\delta^{4(1+\gamma)}}\overset{U_1\to U_2}{\longrightarrow}0.
	\end{align}
	On the other hand, assume that $\displaystyle\lim_{U_1 \to U_2}P(\tau_M^{U_1} >\tau_M^{U_2}) >0$. From the definition of the stopping times, we get
	$\displaystyle\lim_{U_1 \to U_2}P(\{ \Vert y_M(\tau_M^{U_2},U_2)\Vert_{W^{2,4}} \geq M\}\cap \{\Vert y_M(\tau_M^{U_2},U_1)\Vert_{W^{2,4}} < M\} ) >0.$
	Therefore, there exists $\delta_0>0$ such that
	$\displaystyle\lim_{U_1 \to U_2}P(\Vert y_M(\tau_M^{U_2},U_2)\Vert_{W^{2,4}}-\Vert y_M(\tau_M^{U_2},U_1)\Vert_{W^{2,4}} \geq \delta_0) >0,$
	which gives 
	$\displaystyle\lim_{U_1 \to U_2}P\big(\Vert y_M(\tau_M^{U_2},U_2)-y_M(\tau_M^{U_2},U_1)\Vert_{W^{2,4}} \geq \delta_0\big) >0,$
	which contradicts  \eqref{contradiction-result}. Hence $\displaystyle\lim_{U_1 \to U_2}P(\tau_M^{U_1} >\tau_M^{U_2}) =0$. A similar reasoning yields $\displaystyle\lim_{U_1 \to U_2}P(\tau_M^{U_2} >\tau_M^{U_1}) =0$.
\end{proof}

Next, we establish a result which will be useful in the analysis of the variation of the cost functional \eqref{cost-exam}.
\begin{cor}\label{cor-delta0} 
	Let us consider $U$ and $y_0$ satisfying \eqref{data-assumptions} and $\psi\in L^p((\Omega_T, \mathcal{P}_T);(H^1(D))^d)$. Defining
	$ U_\rho=U+\rho\psi, \; \rho \in (0,1),$
	let $(y,\tau_M)$ and $(y_\rho,\tau_M^\rho)$ be  the solutions of \eqref{I} associated with $(U,y_0)$ and $(U_\rho,y_0)$, respectively. Then we have
	$\lim_{\rho \to 0} \dfrac{P(\tau_M^{U}\neq \tau_M^{U_\rho})}{\rho} =0. $
\end{cor}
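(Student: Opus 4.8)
The plan is to rerun the argument of Lemma \ref{Lemma-equality-stopping} with $U_1=U_\rho=U+\rho\psi$ and $U_2=U$, keeping explicit track of the power of $\rho$ so as to upgrade the qualitative statement $P(\tau_M^{U_1}\neq\tau_M^{U_2})\to0$ into the rate $P(\tau_M^U\neq\tau_M^{U_\rho})=o(\rho)$. First I would reproduce the chain Chebyshev inequality $\to$ interpolation $\Vert\cdot\Vert_{W^{2,4}}^{4(1+\gamma)}\le C\Vert\cdot\Vert_W^{1+\gamma}\Vert\cdot\Vert_{W^{2,6}}^{3(1+\gamma)}$ $\to$ Cauchy--Schwarz that produced \eqref{contradiction-result}, for a fixed small $\gamma>0$ (admissible since $p>2(d+1)$). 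The only difference is that the factor carrying the control enters through $\big(\E\sup_{[0,\tau_M\wedge\tau_M^\rho]}\Vert y_\rho-y\Vert_W^{2(1+\gamma)}\big)^{1/2}$, and since $U_\rho-U=\rho\psi$, Corollary \ref{cor-rest-0} (used with $q=2(1+\gamma)$, together with its $V$-estimate to absorb the $L^{q_d}(V)$ term) bounds this factor by $C(M,\epsilon,\gamma)\,\rho^{1+\gamma}$; the complementary factor $\big(\E\sup\Vert y_\rho-y\Vert_{W^{2,6}}^{6(1+\gamma)}\big)^{1/2}$ stays bounded by $C(M)$ via the \apriori estimates of Proposition \ref{Estimate-y-proposition} (both trajectories have $W^{2,4}$-norm below $M$ up to $\tau_M\wedge\tau_M^\rho$, and $\widetilde{W}\hookrightarrow W^{2,6}$). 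This yields, for every fixed $\delta>0$,
\[
P\Big(\sup_{s\in[0,\tau_M\wedge\tau_M^\rho]}\Vert y_\rho(s)-y(s)\Vert_{W^{2,4}}\ge\delta\Big)\le\frac{C(M,\epsilon,\gamma)}{\delta^{4(1+\gamma)}}\,\rho^{1+\gamma}.
\]

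Next I would transfer this bound to the stopping times exactly as in the second half of the proof of Lemma \ref{Lemma-equality-stopping}. On $\{\tau_M^{U_\rho}>\tau_M^U\}$ one has $\tau_M^U<T$, the common interval is $[0,\tau_M^U]=[0,\tau_M\wedge\tau_M^\rho]$, and by continuity of the $W^{2,4}$-paths $\Vert y(\tau_M^U)\Vert_{W^{2,4}}=M$ while $\Vert y_\rho(\tau_M^U)\Vert_{W^{2,4}}<M$, so the two trajectories are separated at $\tau_M^U$; the event $\{\tau_M^{U_\rho}<\tau_M^U\}$ is treated symmetrically. The contradiction mechanism of the lemma provides a fixed threshold $\delta_0>0$, independent of $\rho$, such that this separation is captured by the event in the displayed estimate, so that
\[
P(\tau_M^U\neq\tau_M^{U_\rho})\le\frac{C(M,\epsilon,\gamma)}{\delta_0^{4(1+\gamma)}}\,\rho^{1+\gamma}.
\]
Dividing by $\rho$ and letting $\rho\to0$ then gives $P(\tau_M^U\neq\tau_M^{U_\rho})/\rho\le C\rho^{\gamma}\to0$, since $\gamma>0$ is fixed, which is the claim.

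The hard part will be the transfer step, not the estimate: because $\Vert y\Vert_{W^{2,4}}$ and $\Vert y_\rho\Vert_{W^{2,4}}$ are continuous and may cross the level $M$ with an arbitrarily small gap, the inclusion $\{\tau_M^U\neq\tau_M^{U_\rho}\}\subseteq\{\sup\Vert y_\rho-y\Vert_{W^{2,4}}\ge\delta_0\}$ is not a pointwise implication for any fixed $\delta_0$. One must argue as in Lemma \ref{Lemma-equality-stopping} that a non-vanishing probability of $\{\tau_M^U\neq\tau_M^{U_\rho}\}$ would force a non-vanishing probability of a genuine separation $\ge\delta_0$ at the hitting time, and then quantify this so that the factor $\rho^{1+\gamma}$ is preserved. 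All remaining work is bookkeeping of the exponents already appearing in \eqref{contradiction-result}, Corollary \ref{cor-rest-0} and Proposition \ref{Estimate-y-proposition}.
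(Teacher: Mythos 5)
Your proposal follows the paper's proof exactly: the paper's argument for this corollary consists precisely of rerunning Lemma \ref{Lemma-equality-stopping} with $U_1-U_2=\rho\psi$ to obtain $P\big(\sup_{s\in[0,\tau_M\wedge\tau_M^\rho]}\Vert y_\rho-y\Vert_{W^{2,4}}\ge\delta\big)\le C(M,\epsilon)\rho^{1+\gamma}/\delta^{4(1+\gamma)}$ via Corollary \ref{cor-rest-0} and Proposition \ref{Estimate-y-proposition}, and then transferring this to the stopping times as in that lemma before dividing by $\rho$. The delicacy you flag in the transfer step is inherited verbatim from the contradiction mechanism of Lemma \ref{Lemma-equality-stopping} and is present in the paper's own (very terse) argument as well, so it is not a gap specific to your proposal.
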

\begin{proof}
	Let $\gamma>0$. Using  similar arguments as in  the proof of Lemma \ref{Lemma-equality-stopping}, we can show that $
	P\big(\sup_{s\in [0, \tau_M\wedge\tau_M^\rho]} \Vert y_\rho-y\Vert_{W^{2,4}} \geq \delta\big) \leq \dfrac{C(M,\epsilon)}{\delta^{4(1+\gamma)}} \rho^{1+\gamma},
	$
	which implies the desired result.
\end{proof}

In order to simplify the notation, we set 
$X:=L^p((\Omega_T,\mathcal{P}_T);(H^1(D))^d))$ and denote by $\|\cdot\|_X$ the natural norm on	$X$. The subset  $\mathcal{U}_{ad}^p\subset X$  is endowed with
the induced topology.

\begin{lemma}\label{lemma-continuous}
	For fixed $M \in \mathbb{N}$. Let $y_d\in L^2(0,T;H)$  fixed, and $y(U)=(y_M(t;U), \tau_M^U), \,t\in [0,T],$ be the local pathwise solution of \eqref{I} associated with the control $U\in \mathcal{U}_{ad}^p$ and  stopping times $(\tau_M^U)_{M\in \mathbb{N}}$.
		  Then the mapping $f_M:\mathcal{U}_{ad}^p\to \mathbb{R}$ given by
	 $	\displaystyle f_M(U,y(U))=\E\int_0^{\tau^U_M}\Vert y(U)-y_d\Vert_2^2dt	
	 $
	 is continuous.
\end{lemma}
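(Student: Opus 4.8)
The plan is to establish sequential continuity. Since $\mathcal{U}_{ad}^p\subset X$ carries a metrizable topology, it suffices to take a sequence $(U_n)\subset \mathcal{U}_{ad}^p$ with $\Vert U_n-U\Vert_X\to 0$ and prove $f_M(U_n,y(U_n))\to f_M(U,y(U))$. Writing $y_n=y(U_n)$, $y=y(U)$, $\tau_n=\tau_M^{U_n}$, $\tau=\tau_M^U$, and using the indicator representation $f_M(U_n,y_n)=\E\int_0^T\mathbb{I}_{[0,\tau_n[}(t)\Vert y_n-y_d\Vert_2^2\,dt$, I would split the difference as
\begin{align*}
f_M(U_n,y_n)-f_M(U,y)&=\E\int_0^{\tau_n\wedge\tau}\big(\Vert y_n-y_d\Vert_2^2-\Vert y-y_d\Vert_2^2\big)\,dt\\
&\quad+\E\int_{\tau_n\wedge\tau}^{\tau_n}\Vert y_n-y_d\Vert_2^2\,dt-\E\int_{\tau_n\wedge\tau}^{\tau}\Vert y-y_d\Vert_2^2\,dt,
\end{align*}
the first being a ``common interval'' term and the last two ``mismatch'' terms.

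For the common interval term I would use the identity $\Vert a-y_d\Vert_2^2-\Vert b-y_d\Vert_2^2=(a-b,\,a+b-2y_d)$ with $a=y_n$, $b=y$, followed by Cauchy--Schwarz, to bound the integrand by $\Vert y_n-y\Vert_2(\Vert y_n\Vert_2+\Vert y\Vert_2+2\Vert y_d\Vert_2)$. On $[0,\tau_n\wedge\tau]$ the definition of the stopping times \eqref{stop-time} gives $\Vert y_n\Vert_{W^{2,4}},\Vert y\Vert_{W^{2,4}}\le M$, hence $\Vert y_n\Vert_2,\Vert y\Vert_2\le CM$; the deterministic factor $\Vert y_d\Vert_2$ is then handled by a further Cauchy--Schwarz step in $t$, absorbing it into $\Vert y_d\Vert_{L^2(0,T;H)}$. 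What remains is controlled by $\E\sup_{[0,\tau_n\wedge\tau]}\Vert y_n-y\Vert_2$ and by $(\E\sup_{[0,\tau_n\wedge\tau]}\Vert y_n-y\Vert_2^2)^{1/2}$; since $\Vert\cdot\Vert_2\le C\Vert\cdot\Vert_V$ and the two states share the initial datum $y_0$, Lemma \ref{Lemma-V-stability} bounds these by a constant times a positive power of $\Vert U_n-U\Vert_X$, which tends to $0$.

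The mismatch terms are where the stopping time enters, and I expect them to be the main obstacle. Each is supported on $\{\tau_n\neq\tau\}$: the first is nonzero only on $\{\tau_n>\tau\}$, and its interval $[\tau,\tau_n]$ lies in $[0,\tau_n]$ where $\Vert y_n\Vert_{W^{2,4}}\le M$, so $\Vert y_n-y_d\Vert_2^2\le 2C^2M^2+2\Vert y_d\Vert_2^2$ and the inner integral is dominated by the constant $C_M:=2C^2M^2T+2\Vert y_d\Vert_{L^2(0,T;H)}^2$; the second term is treated symmetrically on $\{\tau>\tau_n\}$ using $\Vert y\Vert_{W^{2,4}}\le M$ on $[0,\tau]$. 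Hence both are bounded by $C_M\,P(\tau_n\neq\tau)$, which tends to $0$ by Lemma \ref{Lemma-equality-stopping}. Collecting the common-interval and mismatch estimates yields $f_M(U_n,y_n)\to f_M(U,y)$, which proves the continuity of $f_M$.
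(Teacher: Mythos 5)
Your proof is correct and follows essentially the same route as the paper: split the difference at $\tau_M^{U_n}\wedge\tau_M^{U}$, control the common-interval part via Lemma \ref{Lemma-V-stability} (using the bound $\Vert y\Vert_{W^{2,4}}\le M$ up to the stopping times), and bound the mismatch part by a constant times $P(\tau_M^{U_n}\neq\tau_M^{U})$, which vanishes by Lemma \ref{Lemma-equality-stopping}. Your treatment of the common-interval term via the polarization identity is in fact slightly more careful than the paper's displayed inequality, but the argument is the same.
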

\begin{proof}
	Let $y_M(t;U_i)),\,t\in [0,T]$,  be the  strong solution  of \eqref{I}, in the sense of Definition \ref{Def-strong-sol-main}, corresponding to the control $U_i\in \mathcal{U}_{ad}^p$,
	$i=1,2$, respectively.
		 We introduce  the stopping times $\underline{\tau_M}=\tau_M^{U_1}\wedge\tau_M^{U_2}$, $ \overline{\tau_M}=\tau_M^{U_1}\vee \tau_M^{U_2}$\footnote{We	recall	that	$a\wedge	b=\min(a,b)$	and	$a\vee	b=\max(a,b)$	for	$a,b\in\mathbb{R}$.} and the control $\bar u \in \mathcal{U}_{ad}^p$ defined by
$\bar u=	\begin{cases} U_1 &\text{ if }  \overline{\tau_M}=\tau_M^{U_1},\\
U_2 &\text{ if } \overline{\tau_M}=\tau_M^{U_2}.
	\end{cases}$

	The  triangular inequality and  Lemma \ref{Lemma-V-stability} yield
	\begin{align*}
		\vert f_M(U_1,y(U_1))&-f_M(U_2,y(U_2))\vert
\leq C\E \int_0^{\underline{\tau_M}}\Vert y(U_1)- y(U_2)\Vert_2^2dt+\E\int_{\underline{\tau_M}}^{\overline{\tau_M}}\Vert y(\bar u)-y_d\Vert_2^2dt\\
		&\leq C(M,L)\E\int_0^{\underline{\tau_M}}\Vert U_1(s)-U_2(s)\Vert_{2}^2ds+\int_0^TP(\underline{\tau_M}< t\leq \overline{\tau_M})(C^2M^2+\Vert y_d\Vert_{2}^2)dt.
		\end{align*}
Taking into account Lemma  \ref{Lemma-equality-stopping}, we deduce
	$ \displaystyle\lim_{\Vert U_1 - U_2\Vert_X\to 0}\vert f_M(U_1,y(U_1))-f_M(U_2,y(U_2))\vert=0,$
	which gives the continuity of $f_M$.
 \end{proof}
Next, we establish a main result on the existence and uniqueness of optimal control for \eqref{cost-exam}.
\begin{theorem}\label{thm-control}
	Let us consider the cost functional $J_M: \mathcal{U}_{ad}^p \to \mathbb{R}$ given by  \eqref{cost-exam} for fixed $M \in \mathbb{N}$. Then there exists a unique optimal control $\widetilde{U_M}  \in \mathcal{U}_{ad}^p.$
\end{theorem}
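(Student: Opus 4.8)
The plan is to reduce \eqref{problem} to the minimization of a strongly lower semicontinuous, bounded below functional over a closed, bounded, convex subset of a reflexive Banach space, and then to invoke the criterion of \cite{Goebel}, which is designed exactly for functionals splitting as a convex summand plus a merely \emph{strongly} continuous one. This matches the situation here, since for the tracking term only strong continuity is at our disposal (Lemma \ref{lemma-continuous}); its weak lower semicontinuity is not available, because passing to weak limits through the highly nonlinear state map and the random stopping time $\tau_M^U$ is out of reach.

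First I would fix the ambient space $X=L^p((\Omega_T,\mathcal{P}_T);(H^1(D))^d)$. Since $p>2(d+1)>1$ and $(H^1(D))^d$ is a Hilbert space, $X$ is uniformly convex (as an $L^p$-space, $1<p<\infty$, valued in a Hilbert space), hence reflexive and strictly convex; by hypothesis $\mathcal{U}_{ad}^p\subset X$ is nonempty, bounded, closed and convex, and therefore weakly sequentially compact. I would then split the cost \eqref{cost-exam} as $J_M=J_M^{1}+J_M^{2}$, where
\[
J_M^{1}(U)=\tfrac12\,\E\int_0^{\tau_M^U}\Vert y(U)-y_d\Vert_2^2\,dt=\tfrac12\,f_M(U,y(U)),\qquad J_M^{2}(U)=\tfrac{\lambda}{p}\Vert U\Vert_X^p.
\]

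Next I would verify the hypotheses of \cite{Goebel}. The penalty $J_M^{2}$ is the $p$-th power of the norm of a uniformly convex space, hence \emph{strictly} convex, continuous, and therefore weakly lower semicontinuous; together with $J_M^{1}\ge 0$ this shows that $J_M\ge 0$ is bounded below. The tracking term $J_M^{1}$ is (strongly) continuous on $\mathcal{U}_{ad}^p$, which is precisely Lemma \ref{lemma-continuous}: its proof controls the fixed-time integrand through the $V$-stability estimate of Lemma \ref{Lemma-V-stability} and the fluctuation of the random endpoint $\tau_M^U$ through the continuity in probability of the stopping times (Lemma \ref{Lemma-equality-stopping}). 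Hence $J_M$ is strongly lower semicontinuous and decomposes as (convex, weakly lsc) $+$ (continuous) on a closed bounded convex subset of the reflexive space $X$, and the result of \cite{Goebel} produces a minimizer $\widetilde{U_M}\in\mathcal{U}_{ad}^p$. The corresponding state $\tilde y:=y(\widetilde{U_M})$ exists, is unique and enjoys the regularity of Theorem \ref{main-thm-exist}, so that $(\widetilde{U_M},\tilde y)$ solves \eqref{problem}.

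The main obstacle is uniqueness, because the overall functional is genuinely non-convex: $f_M$ is neither convex nor weakly lower semicontinuous, and it depends on the random stopping time $\tau_M^U$, so uniqueness cannot be read off from convexity of $J_M$, nor from a naive midpoint comparison (strict convexity of $J_M^{2}$ alone does not dominate the possible concavity of $J_M^{1}$ along a segment). I would instead extract it from the strict convexity of the penalty $J_M^{2}$ together with the uniform convexity of $X$ — the ingredient exploited by the uniqueness part of \cite{Goebel} — combined with the strong continuity and stability estimates established above; checking that these indeed force a single minimizer, rather than merely a set of minimizers, is the delicate point of the argument.
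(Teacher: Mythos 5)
Your setup (the space $X$, its uniform convexity, the splitting $\tfrac{p}{\lambda}J_M=\tfrac{p}{2\lambda}f_M+\Vert\cdot\Vert_X^p$, and the use of Lemma \ref{lemma-continuous} for the continuity of the tracking term) coincides with the paper's, but the way you invoke \cite{Goebel} contains a genuine gap. Goebel's Theorem 4 is a \emph{dense-perturbation} (generic well-posedness) result: for a continuous, bounded-below $f_M$ on the bounded closed convex set $\mathcal{U}_{ad}^p\subset X$, it asserts that the perturbed functional $U\mapsto \tfrac{p}{2\lambda}f_M(U)+\Vert U-v\Vert_X^p$ attains its minimum, at a unique point $U_M(v)$, only for $v$ in a \emph{dense subset} $M_0$ of $\mathcal{U}_{ad}^p$ --- not for every $v$, and in particular not a priori for $v=0$, which is exactly the case one needs. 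Your paraphrase, namely that a functional decomposing as ``(convex, weakly l.s.c.) $+$ (strongly continuous)'' attains its infimum on a closed bounded convex subset of a reflexive space, is false in general: strong continuity of $f_M$ gives no weak lower semicontinuity of the sum, so the direct method does not apply, and this is precisely why the paper cannot stop where you stop. The missing step is the paper's limiting argument: since $0\in\mathcal{U}_{ad}^p$ and $M_0$ is dense, one picks $v_k\in M_0$ with $\Vert v_k\Vert_X\to 0$, uses the continuity of $v\mapsto U_M(v)$ (from \cite[Prop. 5.6]{Benner-Trautwein2019}) and of the value function $v\mapsto\inf_U\bigl(\tfrac{p}{2\lambda}f_M(U)+\Vert U-v\Vert_X^p\bigr)$ to pass to the limit, and identifies $\widetilde{U_M}=\lim_k U_M(v_k)$ as a minimizer of $J_M$.

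The uniqueness claim is also not established in your proposal: you explicitly defer it (``checking that these indeed force a single minimizer \dots is the delicate point''), and the route you sketch --- extracting it from strict convexity of the penalty against a possibly concave tracking term --- is the one you yourself rule out two sentences earlier. In the paper, uniqueness is not obtained from convexity at all; it comes from the same perturbation machinery, namely the uniqueness of $U_M(v_k)$ for each $k$ and the uniqueness of the limit $\lim_k U_M(v_k)$. So both the existence and the uniqueness parts of your argument are missing their decisive step.
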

\begin{proof}
	Notice that $X$ is a uniformly convex Banach space.  The set $\mathcal{U}_{ad}^p$ is bounded and closed convex  subset of $X$. Thanks to Lemma \ref{lemma-continuous}, for	$\lambda>0$	we know that $\dfrac{p}{2\lambda}f_M:\mathcal{U}_{ad}^p\to \mathbb{R}$ is continuous  and  $f_M(U) \geq 0$ for every $U\in \mathcal{U}_{ad}^p$. 
	 By applying  \cite[Thm. 4]{Goebel}, we infer  the existence of a dense subset	 $M_0 \subset \mathcal{U}_{ad}^p$ such that for any $v\in M_0$, the functional $ \dfrac{p}{2\lambda}f_M(U)+\Vert U-v\Vert_X^p$
	attains its minimum over $\mathcal{U}_{ad}^p$. In other words, there exists a unique $U_M(v)\in \mathcal{U}_{ad}^p$ (by taking $\alpha=p$ in   \cite[Thm. 4]{Goebel}) such that
	$(\dfrac{p}{2\lambda}f_M(U_M(v))+\Vert U_M(v)-v\Vert_X^p)=\displaystyle\inf_{U\in \mathcal{U}_{ad}^p}(\dfrac{p}{2\lambda}f_M(U)+\Vert U-v\Vert_X^p). $
	Additionally, $v\mapsto U_M(v)$ is continuous,		see	\cite[Prop.	5.6]{Benner-Trautwein2019}. Since $0\in \mathcal{U}_{ad}^p$, there exists a sequence $(v_k)_{k\in \mathbb{N}} \subset M_0$ such that $\displaystyle\lim_{k\to \infty}\Vert v_k\Vert_{X}=0$. Now, set $\widetilde{U_M}=\displaystyle\lim_{k\to \infty}U_M(v_k)$ and notice that
	\begin{align*}
	\dfrac{p}{\lambda}	J_M(\widetilde{U_M})&=\lim_{k\to \infty}(\dfrac{p}{2\lambda}f_M(U_M(v_k))+\Vert U_M(v_k)-v_k\Vert_X^p)=\lim_{k\to \infty}\inf_{U\in \mathcal{U}_{ad}^p}(\dfrac{p}{2\lambda}f_M(U)+\Vert U-v_k\Vert_X^p)\\
		&=\inf_{U\in \mathcal{U}_{ad}^p}(\dfrac{p}{2\lambda}f_M(U)+\Vert U\Vert_X^p)=\inf_{U\in \mathcal{U}_{ad}^p}\dfrac{p}{\lambda}J_M(U),
	\end{align*}
	where the last inequalities hold thanks to the continuity properties  of  $v\mapsto\displaystyle\inf_{U\in \mathcal{U}_{ad}}\dfrac{p}{2\lambda}f_M(U)+\Vert U-v\Vert_X^p$. The uniqueness of $\widetilde{U_M}$ follows  from the uniqueness of the limit.
\end{proof}
\section{Linearized state equation}\label{Sec5}
This section is devoted to the study of the linearized state equation.
The existence of the solution is based on the  Faedo-Galerkin's approximation, which relies on a special basis, in order to derive the uniform estimates  in $V$ and  $W$.
\\
$\quad$	Let us consider $\psi:D\times ]0,T[\times \Omega \to \mathbb{R}^d, d=2,3$ such that
\begin{align}\label{psi}
\psi \in L^{p}((\Omega_T,\mathcal{P}_T),(H^1(D))^d).
\end{align}
Taking into account the assumptions \eqref{noiseV} and $\mathcal{H}_0$, let us set
\begin{align}
\label{14_1}
\nabla_yG(t,y): (L^2(D))^d &\to L_2(\mathbb{H},(L^2(D))^d)\nonumber\\
	v&\mapsto \nabla_yG(t,y)v \,=\,\{e_\k\to \nabla_yG(t,y)ve_\k:=\nabla_y\sigma_\k(t,y) v\}.
\end{align}
Our goal is to show the existence of the solution $z$ to the following problem:
\begin{align}\label{Linearized}
\begin{cases}
dv(z)+\mathbb{I}_{[0,\tau_M[}\big\{-\nu \Delta z+(y\cdot \nabla )v(z)+(z\cdot \nabla)v(y)+\sum_{j}v(z)^j\nabla y^j+\sum_{j}v(y)^j\nabla z^j&\\[0.1cm]\quad-(\alpha_1+\alpha_2)\text{div}[A(y)A(z)+A(z)A(y)]-\beta\text{ div} \big[ |A(y)|^2A(z)\big]
-2\beta\text{ div}\left[ \left( A(z):A(y)\right)A(y)\right]\big\} dt&\\\quad=\{\psi-\nabla \pi\}\mathbb{I}_{[0,\tau_M[}dt+\mathbb{I}_{[0,\tau_M[}\nabla_yG(\cdot,y)zd\mathcal{W} \quad &\hspace*{-7.4cm}\text{in } D\times\Omega\times (0,T), \\
\text{div}(z)=0 \quad &\hspace*{-7.4cm}\text{in } D\times\Omega\times (0,T),\\
z\cdot \eta=0 \quad [\eta \cdot \mathbb{D}(z)]\cdot \tau=0  \quad &\hspace*{-7.4cm}\text{on } \partial D\times\Omega\times (0,T),\\
z(0)=0 \quad &\hspace*{-7.4cm} \text{in } D\times\Omega,
\end{cases}
\end{align}
where $(\tau_M)_M$ is the sequence of stopping times introduced in \eqref{stopping-time}.
Since the solution of \eqref{I} is defined up to the stopping times $(\tau_M)_{M\in \mathbb{N}}$.Then we define the solution of \eqref{Linearized} accordingly.
\begin{definition}\label{Def-lin}
	A stochastic process $z$ is a solution of \eqref{Linearized} if and only if:	
	\begin{itemize}
		\item $z$ is predictable  with values in $W$ and $v(z) \in \mathcal{C}_w([0,T];(L^2(D))^d)$\footnote{For a Banach space $X$, $\mathcal{C}_w([0,T];X)$ denotes the space of weakly continuous functions  on $[0,T]$ with values in $X$.} P-a.s.
		\item $z\in L^{p}((\Omega_T,\mathcal{P}_T),V)\cap L^2_w(\Omega;L^\infty(0,T;W)).$  
	\item For any $t\in [0,T]$ and P-a.s. $\omega \in \Omega$, the following equality holds
	\begin{align}
	\label{15_1}
	&(v(z(t)),\phi)+\int_0^{t}\mathbb{I}_{[0,\tau_M[}\{2\nu(\mathbb{D} z,\mathbb{D}\phi)+b(y,v(z),\phi)+b(z,v(y),\phi)+b(\phi,y,v(z))+b(\phi,z,v(y))\}ds\notag\\
	&\quad+\int_0^{t}\mathbb{I}_{[0,\tau_M[}\{(\alpha_1+\alpha_2)\big(A(y)A(z)+A(z)A(y),\nabla \phi\big) 
	+\beta\big( |A(y)|^2A(z), \nabla\phi\big)\}ds\notag\\&\qquad+\int_0^{t}2\beta \mathbb{I}_{[0,\tau_M[}\big((A(z):
	A(y))A(y),\nabla\phi\big)ds\notag\\
	&\quad\qquad= \int_0^{t}\mathbb{I}_{[0,\tau_M[}(\psi,\phi)ds+\int_0^{t}\mathbb{I}_{[0,\tau_M[}(\nabla_yG(\cdot,y)z,\phi)d\mathcal{W}(s),\quad
	 \forall \phi \in V.
	\end{align}
\end{itemize} 
\end{definition}
\begin{remark} Due to the  presence of  $\tau_M$ in the system \eqref{Linearized},  the solution  $z$ 
 depends on $M$. 
\end{remark}
\begin{theorem}\label{exis-z+estim}
	Assume that $\psi$ satisfy \eqref{psi}. Then there exists a unique solution $z$ to \eqref{Linearized}, in the sense of Definition \ref{Def-lin},
	satisfying the following estimates
	\begin{align}\label{estimate-z}
	\E\sup_{s\in [0,T]}\Vert z(s)\Vert_{V}^{p} \leq C(M)\int_0^{\tau_M}\Vert \psi\Vert_{2}^{p}ds\text{	and	}
	\E\sup_{s\in [0, T]}\Vert z(s)\Vert_{W}^{2} \leq 
	C(\alpha_1,\alpha_2,\beta,\nu,T,M),
	\end{align}
	where  $ C(M)>0$ and $ C(\alpha_1,\alpha_2,\beta,\nu,T,M)>0$.
\end{theorem}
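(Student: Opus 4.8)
The plan is to construct the solution by a Faedo--Galerkin approximation built on the \emph{special basis} $\{e_i\}_{i\ge 1}$ of eigenfunctions of the modified Stokes operator $(I-\alpha_1\mathbb{P}\Delta)$ associated with \eqref{Stokes}. Since this basis is simultaneously orthogonal in $H$, $V$ and $W$, it commutes appropriately with the projection $\mathbb{P}$ and the map $v(\cdot)$, which is exactly what makes the $W$-estimate below tractable. Writing $z_n=\sum_{i=1}^n g_i^n(t)\,e_i$ and projecting \eqref{Linearized} onto $\mathrm{span}\{e_1,\dots,e_n\}$, I obtain a finite-dimensional linear It\^o system for the coefficients $(g_i^n)$. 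Because the factor $\mathbb{I}_{[0,\tau_M[}$ truncates all the $y$-dependent coefficients, and $y$ is bounded in $W^{2,4}$ by $M$ on $[0,\tau_M]$ (cf. \eqref{stopping-time}) while the noise is globally Lipschitz by \eqref{noise1}, the drift and diffusion are linear in the unknown with bounded predictable coefficients; hence this system admits a unique global strong solution by standard stochastic ODE theory.

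Next I would derive the two uniform a priori bounds. For the $V$-estimate, apply It\^o's formula to $\Vert z_n\Vert_V^2=(v(z_n),z_n)$: the convective and viscoelastic terms are linear in $z_n$ and are absorbed using $\Vert y\Vert_{W^{2,4}}\le M$ together with the antisymmetry $b(y,z,\phi)=-b(y,\phi,z)$, while the noise contributes a term controlled by $L\Vert z_n\Vert_2^2$ via \eqref{noise1}. Raising to the $p$-th power, taking expectations, estimating the martingale term with the Burkholder--Davis--Gundy inequality, and closing with Gr\"onwall's lemma yields the first estimate in \eqref{estimate-z}, with a constant depending on $M$ and $\int_0^{\tau_M}\Vert \psi\Vert_2^p\,ds$. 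For the $W$-estimate, apply It\^o's formula to $\Vert\mathbb{P}v(z_n)\Vert_2^2$; the coercive term $-\tfrac{2\nu}{\alpha_1}\Vert\mathbb{P}v(z_n)\Vert_2^2$ arises exactly as in \eqref{stability-W-eqn}, and the remaining contributions are bounded by $K(1+\Vert y\Vert_{W^{2,4}}^2)\Vert z_n\Vert_W^2$ together with a term of the form $K\Vert z_n\Vert_\infty\Vert y\Vert_{H^3}\Vert z_n\Vert_{H^2}$. The latter is handled through the interpolation inequality of Lemma \ref{interpolation-estimate-lem} and the moment bounds on $\Vert y\Vert_{\widetilde{W}}$ from Proposition \ref{Estimate-y-proposition}, after which Gr\"onwall again gives the second estimate in \eqref{estimate-z}, uniformly in $n$.

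With these $n$-independent bounds in hand, I would extract a subsequence with $z_n\rightharpoonup z$ weakly in $L^p((\Omega_T,\mathcal{P}_T);V)$ and weakly-$*$ in $L^2_w(\Omega;L^\infty(0,T;W))$. Because \eqref{Linearized} is \emph{linear} in the unknown, no strong compactness is needed to identify the limits: each drift term is linear in $z_n$ with a fixed coefficient built from $y$, so testing against $\phi\in\mathrm{span}\{e_i\}$ and passing to the weak limit identifies every term in \eqref{15_1}; the stochastic integral passes to the limit since $z\mapsto\nabla_yG(\cdot,y)z$ is linear and bounded (by \eqref{noiseV}) and the It\^o integral is a bounded linear operation on predictable integrands, hence weakly continuous. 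The regularity $v(z)\in\mathcal{C}_w([0,T];(L^2(D))^d)$ follows from the uniform $W$-bound and a standard weak-continuity argument. Uniqueness is then immediate from linearity: the difference of two solutions solves \eqref{Linearized} with $\psi\equiv 0$, so the first estimate in \eqref{estimate-z} applied to this difference forces it to vanish.

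I expect the main obstacle to be the $W$-estimate, specifically the control of the top-order mixed term $\Vert z_n\Vert_\infty\Vert y\Vert_{H^3}\Vert z_n\Vert_{H^2}$: one must balance the $H^3$-regularity of the state $y$, available only in the $L^p(\Omega)$-sense of Proposition \ref{Estimate-y-proposition}, against the $W$-norm of $z_n$ through a sharp interpolation, which is precisely the role of the hypothesis $p>2(d+1)$ and of Lemma \ref{interpolation-estimate-lem}. Ensuring that the special basis makes $\mathbb{P}v$ commute with the Galerkin projection, so that the coercive term $-\tfrac{2\nu}{\alpha_1}\Vert\mathbb{P}v(z_n)\Vert_2^2$ survives at the discrete level, is the other delicate point.
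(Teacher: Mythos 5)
Your proposal is correct and follows essentially the same route as the paper: Faedo--Galerkin on a special basis satisfying the compatibility \eqref{basis1} (your Stokes eigenbasis is a valid instance, since $\mathbb{P}v(e_i)=\lambda_i e_i$ gives $(v,e_i)_W=(1+\lambda_i)(v,e_i)_V$), the $V$-estimate via It\^o, BDG and Gr\"onwall, the $W$-estimate whose critical term $\Vert z_n\Vert_\infty\Vert y\Vert_{H^3}\Vert z_n\Vert_W$ is absorbed by Lemma \ref{interpolation-estimate-lem} and $p>2(d+1)$, weak passage to the limit using linearity, and uniqueness from the $V$-estimate with $\psi\equiv 0$. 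The only cosmetic difference is that the paper obtains the coercive $\mathbb{P}v(z_n)$-term by multiplying the $V$-form equation by $\mu_i$ and invoking \eqref{basis1} rather than testing directly with $\mathbb{P}v(z_n)$, and the bound on the linearized noise uses \eqref{noiseV} rather than \eqref{noise1}; neither affects the argument.
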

\subsection{Approximation}
\label{S1_zn}
Following the same strategy as in \cite{Chem-Cip-2},  let us consider 
an orthonormal basis $\{h_i\}_{i\in \mathbb{N}} \subset (H^4(D)^d) \cap W$  in $V$, which  satisfies
\begin{align}\label{basis1}
(v,h_i)_W=\mu_i(v,h_i)_V, \;\forall v \in W, \;  \mu_i >0, \forall i\in \mathbb{N},\text{	and	}\;\mu_i \to \infty,\; \text{as}\;i \to \infty.
\end{align}
 As a consequence of \eqref{basis1}, the sequence $\{\tilde{h}_i=\frac{1}{\sqrt{\mu_i}}h_i\}$ is an orthonormal basis in $W$. Let us introduce the Galerkin approximations of \eqref{Linearized}. Consider $W_n=\text{span}\{h_1,\cdots,h_n\}$ and define
$ z_n(t)=\sum_{i=1}^n c_i(t)h_i$   for each  $t\in [0,T].$
The approximated problem for \eqref{Linearized} reads,	 for	$ \phi \in W_n,$
	\begin{align}\label{approximation}
&(v(z_n(t)),\phi)+\int_0^{t}\hspace*{-0.15cm}\mathbb{I}_{[0,\tau_M[}\{2\nu(\mathbb{D} z_n,\mathbb{D}\phi)+b(y,v(z_n),\phi)+b(z_n,v(y),\phi)+b(\phi,y,v(z_n))+b(\phi,z_n,v(y))\}ds\nonumber\\
&\quad+\int_0^{t}\mathbb{I}_{[0,\tau_M[}\{(\alpha_1+\alpha_2)\big(A(y)A(z_n)+A(z_n)A(y),\nabla \phi\big) 
+\beta\big( |A(y)|^2A(z_n), \nabla\phi\big)\}ds\\&\quad+\int_0^{t}2\beta \mathbb{I}_{[0,\tau_M[}\big((A(z_n):
A(y))A(y),\nabla\phi\big)ds= \int_0^{t}\mathbb{I}_{[0,\tau_M[}(\psi,\phi)ds+\int_0^{t}\mathbb{I}_{[0,\tau_M[}(\nabla_yG(\cdot,y)z_n,\phi)d\mathcal{W}. \nonumber
\end{align}
 \eqref{approximation} defines  a system of $n$-stochastic  linear ODE, 
 by using \textit{e.g.}	a  \textit{"Banach fixed point theorem"}, it follows that  \eqref{approximation}  has a unique  solution $z_n$ such that $z_n$ is a predictable process and satisfies
\begin{align}\label{local-time-lin}
z_n\in L^2(\Omega;\mathcal{C}([0,T],W_n)).
\end{align}
\subsection{Uniform estimates}
\subsubsection{Estimate in the space $V$ for $z_n$}\label{subsectzn-V}
For any $t\in [0,T]$, setting $\phi=h_i$ in \eqref{approximation}, and applying It\^o's formula for the function $x\mapsto x^2$, we infer
\begin{align*}
&(z_n(t),h_i)_V^2+\int_0^{t}2\mathbb{I}_{[0,\tau_M[}(z_n,h_i)_V\{(\alpha_1+\alpha_2)\big(A(y)A(z_n)+A(z_n)A(y),\nabla h_i\big) 
+\beta\big( |A(y)|^2A(z_n), \nabla h_i\big)\}ds\\
&+\int_0^{t}2\mathbb{I}_{[0,\tau_M[}(z_n,h_i)_V\{2\nu(\mathbb{D} z_n,\mathbb{D}h_i)+b(y,v(z_n),h_i)+b(z_n,v(y),h_i)+b(h_i,y,v(z_n))+b(h_i,z_n,v(y))\}ds\nonumber\\
&\qquad+\int_0^{t}4\beta\mathbb{I}_{[0,\tau_M[} (z_n,h_i)_V\big((A(z_n):
A(y))A(y),\nabla h_i\big)ds= \int_0^{t}2\mathbb{I}_{[0,\tau_M[}(z_n,h_i)_V(\psi,h_i)ds\\
&\quad\qquad+\int_0^{t}2\mathbb{I}_{[0,\tau_M[}(z_n,h_i)_V(\nabla_yG(\cdot,y)z_n,h_i)d\mathcal{W}+\sum_{ \k\geq 1}\int_0^{t}\mathbb{I}_{[0,\tau_M[}(\nabla_y\sigma_\k(\cdot,y)z_n,h_i)^2 ds.
\end{align*}
By summing the last equalities from $i=1$ to $n$, we obtain
\begin{align*}
&\Vert z_n(t)\Vert_V^2+\int_0^{t}2\mathbb{I}_{[0,\tau_M[}\{(\alpha_1+\alpha_2)\big(A(y)A(z_n)+A(z_n)A(y),\nabla z_n\big) 
+\beta\big( |A(y)|^2A(z_n), \nabla z_n\big)\}ds\\&+\int_0^{t}2\mathbb{I}_{[0,\tau_M[}\{2\nu\|\mathbb{D}z_n\|^2_2
+b(y,v(z_n),z_n)+b(z_n,v(y),z_n)+b(z_n,y,v(z_n))+b(z_n,z_n,v(y))\}ds\nonumber\\[-0.5cm]
\\&\qquad+\int_0^{t}4\beta \mathbb{I}_{[0,\tau_M[}\big((A(z_n):
A(y))A(y),\nabla z_n\big)ds= \int_0^{t}2\mathbb{I}_{[0,\tau_M[}(\psi,z_n)ds\\
&\quad\qquad+2\int_0^{t}\mathbb{I}_{[0,\tau_M[}(\nabla_yG(\cdot,y)z_n,z_n)d\mathcal{W}+\sum_{i=1}^n\sum_{ \k\geq 1}\int_0^{t}\mathbb{I}_{[0,\tau_M[}(\nabla_y\sigma_\k(\cdot,y)z_n,h_i)^2 ds.
\end{align*}
Thanks to Lemma \ref{tecnical-lemma} (see Section \ref{Technical-results}), we have 
\begin{align}
\label{12_1}
&\int_0^{t}\mathbb{I}_{[0,\tau_M[}\{b(y,v(z_n),z_n)+b(z_n,v(y),z_n)+b(z_n,y,v(z_n))+b(z_n,z_n,v(y))\}ds\notag
\\&=\int_0^{t}\mathbb{I}_{[0,\tau_M[}(-b(y,z_n,v(z_n))+b(z_n,y,v(z_n)))ds\leq  C\int_0^{t}\mathbb{I}_{[0,\tau_M[}\Vert y\Vert_{W^{2,4}}\Vert z_n\Vert_{V}^2ds\notag\\
&\leq  CM\int_0^{t}\mathbb{I}_{[0,\tau_M[}\Vert z_n\Vert_{V}^2ds.
\end{align}
Now, similarly to \cite[Section 4.2]{yas-fer}, we derive the following estimates:
\begin{align}
\label{12_2}
&\left\vert \int_0^{t}2(\alpha_1+\alpha_2)\mathbb{I}_{[0,\tau_M[}\int_D [A(y)A(z_n)+A(z_n)A(y)]: \nabla z_n dx ds\right\vert \leq  C_2  \int_0^{t}\mathbb{I}_{[0,\tau_M[}\Vert z_n\Vert_{H^1}^2\Vert  y\Vert_{W^{1,\infty}}ds\notag\\
&\quad\leq C_2\int_0^{t}\mathbb{I}_{[0,\tau_M[}  \Vert z_n\Vert_{V}^2\Vert  y\Vert_{W^{2,4}}ds\leq C_2M\int_0^{t} \mathbb{I}_{[0,\tau_M[} \Vert z_n\Vert_{V}^2ds;\notag\\
&\left\vert \int_0^{t}4\beta\mathbb{I}_{[0,\tau_M[}\int_D[\left(A(z_n)\cdot A(y)\right)A(y)\big]:\nabla z_ndxds\right\vert\leq C_3  \int_0^{t}\mathbb{I}_{[0,\tau_M[}\Vert z_n\Vert_{H^1}^2\Vert  y\Vert_{W^{1,\infty}}^2ds\notag\\&\quad \leq C_3  \int_0^{t}\mathbb{I}_{[0,\tau_M[}\Vert z_n\Vert_{V}^2\Vert  y\Vert_{W^{2,4}}^2 ds
\leq C_3 M^2 \int_0^{t}\mathbb{I}_{[0,\tau_M[}\Vert z_n\Vert_{V}^2 ds.
\end{align}
Since  $A(z_n)$ is symmetric, we have the relation
\begin{align}
\label{12_3}
&\int_0^{t}2\beta\mathbb{I}_{[0,\tau_M[}\big( |A(y)|^2A(z_n), \nabla z_n\big)ds=\beta\int_0^{t}\mathbb{I}_{[0,\tau_M[}\int_D|A(y)|^2|A(z_n)|^2dxds \geq 0.
\end{align}
Let $\k \in \mathbb{N}^*$, denote by $\bar{\sigma_\k}$ the solution of \eqref{Stokes} with $f=\nabla_y\sigma_\k(\cdot,y)z_n \in (L^2(D))^d$. Therefore
\begin{align*}
&\sum_{i=1}^n\sum_{ \k\geq 1}\int_0^{t}\mathbb{I}_{[0,\tau_M[}(\nabla_y\sigma_\k(\cdot,y)z_n,h_i)^2 ds=
\sum_{i=1}^n\sum_{ \k\geq 1}\int_0^{t}\mathbb{I}_{[0,\tau_M[}(\bar{\sigma_\k},h_i)_V^2 ds =
\sum_{ \k\geq 1}\int_0^{t}\mathbb{I}_{[0,\tau_M[}\Vert\bar{\sigma_\k}\Vert_V^2 ds\\&\leq  	C\sum_{ \k\geq 1}\int_0^{t}\mathbb{I}_{[0,\tau_M[}\Vert\nabla_y\sigma_\k(\cdot,y)z_n\Vert_2^2 ds \leq  	C\sum_{ \k\geq 1}\int_0^{t}\mathbb{I}_{[0,\tau_M[}a_k^2\Vert z_n\Vert_2^2 ds\leq C_{*}\int_0^{t}\mathbb{I}_{[0,\tau_M[}\Vert z_n\Vert_2^2 ds.
\end{align*}
Gathering the previous estimates, there exists $C>0$ independent of $n$ such that the following inequality holds
\begin{align*}
\Vert z_n(t)\Vert_V^2+4\nu\int_0^{t} \mathbb{I}_{[0,\tau_M[}\Vert  \mathbb{D} z_n\Vert_{2}^2ds &\leq C(1+M^2)\int_0^{t}\mathbb{I}_{[0,\tau_M[}\Vert z_n\Vert_{V}^2ds+\int_0^{\tau_M}\Vert \psi\Vert_2^2ds\\
&\qquad+2\int_0^{t}\mathbb{I}_{[0,\tau_M[}(\nabla_yG(\cdot,y)z_n,z_n)d\mathcal{W}.
\end{align*}
Letting $p\geq 1$ and  taking the $p^{th}$ power of each term of this inequality, we obtain
\begin{align}
\label{09:01:23_1}
\Vert z_n(t)\Vert_V^{2p}&\leq C(p)T^{p-1}[(1+M^2)^p\int_0^{t}\mathbb{I}_{[0,\tau_M[}\Vert z_n\Vert_{V}^{2p}ds+\int_0^{t}\mathbb{I}_{[0,\tau_M[}\Vert \psi\Vert_2^{2p}ds]\notag\\&\qquad+C(p)\vert \int_0^{t}\mathbb{I}_{[0,\tau_M[}(\nabla_yG(\cdot,y)z_n,z_n)d\mathcal{W}\vert^p.
\end{align}
For fixed $N\in \mathbb{N}$, let us introduce  the  stopping time 
$\mathbf{t}_N^n=\inf\{ t\in [0,T]: \Vert z_n(t)\Vert_V \geq N\}$. 
For $t\in[0,T]$, and any $\delta >0$, the Burkholder-Davis-Gundy inquality gives 
\begin{align*}
&\E\sup_{s\in [0, t\wedge\mathbf{t}_N^n]}\vert \int_0^{s}\mathbb{I}_{[0,\tau_M[}(\nabla_yG(\cdot,y)z_n,z_n)d\mathcal{W}\vert^p
\leq \delta\E\sup_{s\in [0, t\wedge\mathbf{t}_N^n]}\Vert z_n(s)\Vert_{V}^{2p}+C_\delta \E \int_0^{t\wedge\mathbf{t}_N^n}
\mathbb{I}_{[0,\tau_M[}
\Vert z_n\Vert_2^{2p}ds.
\end{align*}
Considering  the supremum in the inequality \eqref{09:01:23_1},
for $s\in [0, t\wedge\mathbf{t}_N^n]$, and inserting the estimate of the stochastic term, after an appropriate choice of $\delta$, we deduce
\begin{align*}
&\E\sup_{s\in [0, t\wedge\mathbf{t}_N^n]}\Vert z_n(s)\Vert_{V}^{2p}\leq C(p)T^{p-1}[(1+M^2)^p\E
\int_0^{t}\mathbb{I}_{[0,\tau_M[}
\sup_{r\in [0,s\wedge\mathbf{t}_N^n]}
\Vert z_n(r)\Vert_{V}^{2p}ds+\E\int_0^{\tau_M}\Vert \psi\Vert_2^{2p}ds].
\end{align*}
Using the Gronwall's inequality, we obtain
$\E\sup_{s\in [0, \mathbf{t}_N^n]}\Vert z_n(s)\Vert_{V}^{2p}\leq  C(M)\E\int_0^{\tau_M}\Vert \psi\Vert_{2}^{2p}ds,$
where $ C(M)=C(p)T^{p-1}[(1+M^2)^pe^{C(p)T^{p}[(1+M^2)^p}.$ Note	that  $(\mathbf{t}_N^n)_N$ is an increasing positive sequence of stopping times and $\mathbf{t}_N^n \to T$ in probability.	Thus, the monotone convergence theorem  ensures
\begin{align*}
&\E\sup_{s\in [0, T]}\Vert z_n(s)\Vert_{V}^{2p} \leq C(M)\E\int_0^{\tau_M}\Vert \psi\Vert_{2}^{2p}ds, \quad \forall t\in [0,T], \quad \forall p \geq 1.
\end{align*}

\smallskip
Next, we will establish a $W$-uniform estimate for 
$z_n$.  Before entering in details, we recall a useful  equality for  divergence free 
vector fields  tangent to the boundary. 
\begin{lemma}\label{Lemma 3.1} We have:
	$ (curl v(y)\times u,\phi)=b(\phi,u,v(y))-b(u,\phi,v(y)),$
	$\forall	u,y \in \widetilde{W}$ and $\phi \in V${\color{blue}.}
\end{lemma}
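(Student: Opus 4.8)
The plan is to reduce the identity to an elementary pointwise vector-calculus identity, followed by a single integration by parts that exploits the divergence-free and tangency properties of the fields lying in $V$. Throughout I set $w:=v(y)=y-\alpha_1\Delta y$ and record that $w$ is divergence free, since
$$\text{div}(v(y))=\text{div}(y)-\alpha_1\Delta(\text{div}(y))=0,$$
and that $w\in (H^1(D))^d$ because $y\in\widetilde{W}\subset (H^3(D))^d$. The starting point is the pointwise identity, valid in both $d=2$ and $d=3$ (with $\text{curl}\,w$ read as a vector in the usual way),
$$\text{curl}\,w\times u=(u\cdot\nabla)w-\sum_{i=1}^d u_i\nabla w_i,$$
which I would verify by a short Levi-Civita computation: writing $(\text{curl}\,w)_m=\epsilon_{mjk}\partial_j w_k$ and contracting $\epsilon_{lmi}\epsilon_{mjk}=\delta_{ij}\delta_{lk}-\delta_{ik}\delta_{lj}$ produces exactly $u_i\partial_i w_l-u_i\partial_l w_i$ in the $l$-th slot.

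Taking the $L^2$-inner product with $\phi$, the first term on the right yields $b(u,w,\phi)$ directly, while the second yields $-\int_D\sum_i u_i(\phi\cdot\nabla)w_i\,dx$. For this second integral I would integrate by parts after expanding $\phi_j\partial_j(u_iw_i)=\phi_j(\partial_j u_i)w_i+\phi_j u_i(\partial_j w_i)$ and summing over $i,j$. Since
$$\int_D\phi\cdot\nabla(u\cdot w)\,dx=-\int_D(\text{div}\,\phi)(u\cdot w)\,dx+\int_{\partial D}(\phi\cdot\eta)(u\cdot w)\,dS=0,$$
using $\text{div}\,\phi=0$ and $\phi\cdot\eta=0$ on $\partial D$ for $\phi\in V$, I obtain $\int_D\sum_i u_i(\phi\cdot\nabla)w_i\,dx=-b(\phi,u,w)$. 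Hence
$$(\text{curl}\,w\times u,\phi)=b(u,w,\phi)+b(\phi,u,w).$$

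Finally, I would invoke the antisymmetry $b(u,w,\phi)=-b(u,\phi,w)$, which holds because $u\in V$; this is precisely the property $b(y,z,\phi)=-b(y,\phi,z)$ recorded just after the definition of $b$, applied with transport field $u\in V$ and $w,\phi\in(H^1(D))^d$. Substituting gives
$$(\text{curl}\,v(y)\times u,\phi)=b(\phi,u,v(y))-b(u,\phi,v(y)),$$
which is the claim. The only delicate point to watch is that every boundary term must vanish: this relies on the transport/test field in each integration by parts belonging to $V$, hence being divergence free and tangent to $\partial D$, whereas $w=v(y)$ itself need not be tangent to the boundary — and indeed it never has to play that role in the computation, only $\phi$ and $u$ do.
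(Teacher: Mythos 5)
Your proof is correct. The paper itself states Lemma \ref{Lemma 3.1} without proof (it is merely ``recalled''), so there is no in-text argument to compare against; your derivation is a sound, self-contained verification in exactly the spirit of the integration-by-parts computations the paper does carry out elsewhere (e.g.\ Lemma \ref{IPP1}). The pointwise identity $\mathrm{curl}\,w\times u=(u\cdot\nabla)w-\sum_i u_i\nabla w_i$ is verified correctly in both $d=2$ and $d=3$, the boundary and divergence terms vanish for the right reasons ($\phi\in V$ for the integration by parts, $u\in\widetilde{W}\subset V$ for the antisymmetry $b(u,w,\phi)=-b(u,\phi,w)$), and the regularity $y\in\widetilde{W}$, hence $v(y)\in (H^1(D))^d$ and $u\in (H^3(D))^d\hookrightarrow W^{1,\infty}$, is ample for every integral involved. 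Your closing remark that $v(y)$ itself never needs to be tangent to $\partial D$ is also accurate.
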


\subsubsection{Estimate in the space $W$ for $z_n$}

As a consequence of  Lemma \ref{Lemma 3.1}, we	get
\begin{align*}
&((y\cdot \nabla )v(z)+(z\cdot \nabla)v(y)+\sum_{j}v(z)^j\nabla y^j+\sum_{j}v(y)^j\nabla z^j,\phi)
=(curl v(y)\times z,\phi)+(curl v(z)\times y,\phi),
\end{align*}
for any $y\in \widetilde{W}$ and  $z,\phi \in W$.
Setting $\phi=h_i$ in \eqref{approximation} we write 
	\begin{align}\label{approx-Ito}
&(z_n(t),h_i)_V+\int_0^{t}\mathbb{I}_{[0,\tau_M[}(f_n,h_i)ds=\sum_{ \k\geq 1}\int_0^{t}\mathbb{I}_{[0,\tau_M[}(\nabla_y\sigma_\k(\cdot,y)z_n,h_i)d\beta_\k, \quad \forall t\in[0,T],\\
&\text{		where 	}	f_n= -\nu \Delta z_n+ curl v(y)\times z_n+curl v(z_n)\times y-(\alpha_1+\alpha_2)\text{div}[A(y)A(z_n)+A(z_n)A(y)]\notag\\
		&\qquad\qquad\qquad-\beta\text{ div} \big[ |A(y)|^2A(z_n)\big]
		-2\beta\text{ div}\left[ \left( A(z_n):A(y)\right)A(y)\right]-\psi.\notag
	\end{align}
Let $\widetilde{f}_n$ and $\tilde{\sigma}_\k^n$ be the solutions of \eqref{Stokes} for $f=f_n$ and $f=\nabla_y\sigma_\k(\cdot,y)z_n, \forall \k\in \mathbb{N}$.  We have
\begin{equation}\label{V-W-relation}
(f_n,h_i)=(\widetilde{f}_n,h_i)_V, \; (\nabla_y\sigma_\k(\cdot,y)z_n,h_i)=(\tilde{\sigma}_k^n,h_i)_V 
\end{equation}
By multiplying \eqref{approx-Ito} by   $\mu_i$ and using \eqref{basis1}, we derive 
$$ (z_n(t),h_i)_W+\int_0^{t}\mathbb{I}_{[0,\tau_M[}(\widetilde{f}_n,h_i)_Wds=\sum_{ \k\geq 1}\int_0^{t}\mathbb{I}_{[0,\tau_M[}(\tilde{\sigma}_k^n,h_i)_Wd\beta_\k.$$
Using Ito's formula, next multiplying  by $\frac{1}{\mu_i}$ and summing over $i=1,\cdots,n$  we are able to infer
$$\Vert z_n(t)\Vert_{W}^2+2\int_0^{t}\mathbb{I}_{[0,\tau_M[}(\widetilde{f}_n,z_n)_Wds=2\sum_{ \k\geq 1}\int_0^{t}\mathbb{I}_{[0,\tau_M[}(\tilde{\sigma}_k^n,z_n)_Wd\beta_\k+\sum_{\k\geq 1}\int_0^{t}\mathbb{I}_{[0,\tau_M[} \Vert\tilde{\sigma}_k^n\Vert^2_Wds.$$
By using the definition of  inner product in the space $W$ and using the properties of $\mathbb{P}$, we deduce
\begin{align}\label{eqn-key-zn}
&\Vert z_n(t)\Vert_{W}^2+2\int_0^{t}\mathbb{I}_{[0,\tau_M[}(f_n,z_n)ds+ 2\int_0^{t}\mathbb{I}_{[0,\tau_M[}(f_n,\mathbb{P}v(z_n))ds-\sum_{\k\geq 1} \int_0^{t}\mathbb{I}_{[0,\tau_M[}\Vert\tilde{\sigma}_k^n\Vert^2_Wds\notag \\
&=2\sum_{ \k\geq 1}\int_0^{t}\mathbb{I}_{[0,\tau_M[}(\nabla_y\sigma_\k(\cdot,y)z_n,z_n)d\beta_\k+2\sum_{ \k\geq 1}\int_0^{t}\mathbb{I}_{[0,\tau_M[}(\nabla_y\sigma_\k(\cdot,y)z_n,\mathbb{P}v(z_n))d\beta_\k.
\end{align}


Arguments already detailed in \eqref{12_1}-\eqref{12_3}
 yield
 \begin{align*}
&2\bigl|\int_0^{t}\mathbb{I}_{[0,\tau_M[}(f_n,z_n)ds\bigr|
\leq 4\nu\int_0^{t}\mathbb{I}_{[0,\tau_M[} \Vert  \mathbb{D} z_n\Vert_{2}^2ds+C(1+M^2)\int_0^{t}\mathbb{I}_{[0,\tau_M[}\Vert z_n\Vert_{V}^2ds+\int_0^{\tau_M}\Vert \psi\Vert_{2}^2ds.
\end{align*}

Concerning the  third term of left hand side of \eqref{eqn-key-zn}, we have
\begin{align*}
&2\int_0^{t}\mathbb{I}_{[0,\tau_M[}(f_n,\mathbb{P}v(z_n)) ds= 2\nu\int_0^{t}\mathbb{I}_{[0,\tau_M[} (-\Delta z_n ,\mathbb{P}v(z_n))ds\\
&+ 2\int_0^{t}\mathbb{I}_{[0,\tau_M[}((y\cdot \nabla )v(z_n)+(z_n\cdot \nabla)v(y)+\sum_{j}v(z_n)^j\nabla y^j+\sum_{j}v(y)^j\nabla z_n^j,\mathbb{P}v(z_n))ds\\&-2\int_0^{t}\mathbb{I}_{[0,\tau_M[}(\psi,\mathbb{P}v(z_n))ds-4\beta\int_0^{t}\mathbb{I}_{[0,\tau_M[}\text{ div}\left[ \left( A(z_n):A(y)\right)A(y)\right],\mathbb{P}v(z_n))ds
\\&+2\int_0^{t}\mathbb{I}_{[0,\tau_M[}(-(\alpha_1+\alpha_2)\text{div}[A(y)A(z_n)+A(z_n)A(y)]-\beta\text{ div} \big[ |A(y)|^2A(z_n)\big],\mathbb{P}v(z_n))ds
.\end{align*}
Using the properties of the trilinear form $b$,
\cite[Lemma 5]{Bus-Ift-2} and $W^{2,4}(D)\hookrightarrow W^{1,\infty}(D),$ we deduce 
\begin{align*}
	& 2((y\cdot \nabla )v(z_n)+(z_n\cdot \nabla)v(y)+\sum_{j}v(z_n)^j\nabla y^j+\sum_{j}v(y)^j\nabla z_n^j,\mathbb{P}v(z_n))ds\\
	&\leq 2\big(\Vert y\Vert_{\infty}\Vert v(z_n)-\mathbb{P}v(z_n)\Vert_{H^1}\Vert z_n\Vert_W+\Vert z_n\Vert_{\infty}\Vert y\Vert_{H^3}\Vert z_n\Vert_W+\Vert y\Vert_{W^{1,\infty}}\Vert z_n\Vert_{W}^2+\Vert y\Vert_{W^{2,4}}\Vert z_n\Vert_{W^{1,4}}\Vert z_n\Vert_{W}\big)\\
	&\leq C \Vert y\Vert_{W^{2,4}}\Vert z_n\Vert_{W}^2+2\Vert z_n\Vert_{\infty}\Vert y\Vert_{H^3}\Vert z_n\Vert_W.
\end{align*}
 In addition, we notice that
$	2\nu(-\Delta z_n ,\mathbb{P}v(z_n))-2(\psi,\mathbb{P}v(z_n))\leq (2\nu+1)\Vert z_n\Vert_{W}^2+\Vert \psi\Vert_{2}^2.$
On the other hand, standard computations ensure
\begin{align*}
	&2(-(\alpha_1+\alpha_2)\text{div}[A(y)A(z_n)+A(z_n)A(y)]-\beta\text{ div} \big[ |A(y)|^2A(z_n)\big]
	-2\beta\text{ div}\left[ \left( A(z_n):A(y)\right)A(y)\right],\mathbb{P}v(z_n))\\
	&\leq C \Vert y\Vert_{W^{2,4}}\Vert z_n\Vert_{W}^2+ C \Vert y\Vert_{W^{2,4}}^2\Vert z_n\Vert_{W}^2\leq  C(\alpha_1,\alpha_2,\beta) (\Vert y\Vert_{W^{2,4}}^2+1)\Vert z_n\Vert_{W}^2.
\end{align*}
Therefore, there exists $\mathbf{C} >0$ such that
\begin{align}\label{est-fn-2}
\vert 2(f_n,\mathbb{P}v(z_n))\vert &\leq C(\alpha_1,\alpha_2,\beta,\nu) (\Vert y\Vert_{W^{2,4}}^2+1)\Vert z_n\Vert_{W}^2+\Vert \psi\Vert_{2}^2+2\Vert z_n\Vert_{\infty}\Vert y\Vert_{H^3}\Vert z_n\Vert_W.
\end{align}
Taking into account the properties of the solution of \eqref{Stokes}, we	infer	that
\begin{align*}
\sum_{\k\geq 1} \int_0^{t}\mathbb{I}_{[0,\tau_M[}\Vert\tilde{\sigma}_k^n\Vert^2_Wds \leq C\sum_{\k\geq 1} \int_0^{t}\mathbb{I}_{[0,\tau_M[}\Vert \nabla_y\sigma_\k(\cdot,y)z_n\Vert^2_2ds\leq  C_M\int_0^{t}\mathbb{I}_{[0,\tau_M[}\Vert z_n\Vert^2_2ds.
\end{align*}
\smallskip
Concerning the stochastic integrals, let  us consider $q\geq 1$
and define the sequence of  stopping times 
 $\mathbf{t}_N^n=\inf\{ t\in [0,T]: \Vert z_n(t)\Vert_W \geq N\}$, $N\in \mathbb{N}$. 
 For any $t\in[0,  T]$ and $\delta >0$, the Burkholder-Davis-Gundy inquality gives 
\begin{align*}
&\E\sup_{s\in [0, t\wedge\mathbf{t}_N^n]}\vert \int_0^{s}\mathbb{I}_{[0,\tau_M[}(\nabla_yG(\cdot,y)z_n,z_n)d\mathcal{W}\vert^q+\E\sup_{s\in [0, t\wedge\mathbf{t}_N^n]}\big\vert \int_0^{s}\mathbb{I}_{[0,\tau_M[}\sum_{ \k\geq 1}(\nabla_y\sigma_\k(\cdot,y)z_n,\mathbb{P}v(z_n))d\beta_\k\big\vert^q
\\
&\quad\leq	\delta\E\sup_{s\in [0, t\wedge\mathbf{t}_N^n]}\Vert \mathbb{P}v(z_n)(s)\Vert_{2}^{2q}+
 \delta\E\sup_{s\in [0, t\wedge\mathbf{t}_N^n]}\Vert z_n(s)\Vert_{V}^{2q}+C_\delta \E \int_0^{t\wedge\mathbf{t}_N^n}\mathbb{I}_{[0,\tau_M[}\Vert z_n\Vert_2^{2q}ds.
\end{align*}
Now, taking the $q^{th}$power of relation \eqref{eqn-key-zn}, computing the supremum over the interval
 $[0,  t\wedge\mathbf{t}_N^n]$, taking	the	expectation	 and gathering the previous estimates, we infer that
\begin{align}\label{zn-W-estimate-p}
(1-2\delta)\E\sup_{s\in [0, t\wedge\mathbf{t}_N^n]}\Vert z_n(s)\Vert_{W}^{2q} &\leq 
C(\alpha_1,\alpha_2,\beta,\nu,T,M,q,\delta)
\int_0^{t} \E\sup_{r\in [0, s\wedge\mathbf{t}_N^n]}
\Vert z_n(r)\Vert_{W}^{2q}ds\\
&+C(T)\E\int_0^{\tau_M}\Vert \psi\Vert_{2}^{2q}ds+C(q,T)\E\int_0^{t\wedge\mathbf{t}_N^n}(\Vert z_n\Vert_{\infty}\Vert y\Vert_{H^3}\Vert  z_n\Vert_W)^q ds.\nonumber
\end{align} 
Thanks to Lemma \ref{interpolation-estimate-lem} ( see Section \ref{Technical-results}),
for any $q\geq 1$ and $\epsilon\in ]0,1]$ we have
\begin{align*}
	\E\int_0^{t\wedge\mathbf{t}_N^n}&(\Vert z_n\Vert_{\infty}\Vert y\Vert_{H^3}\Vert  z_n\Vert_W)^q ds \\
	&\leq C_\epsilon(D) [\int_0^{t}
	\E\sup_{r\in [0, s\wedge\mathbf{t}_N^n]}
	\Vert z_n(r)\Vert_{W}^{2q}ds
	+\Vert y\Vert_{L^{2q(d+1+\epsilon)}(\Omega\times(0,\tau_M);H^3)}^{2q(d+\epsilon)}\Vert z_n\Vert_{L^{2q(d+1+\epsilon)}(\Omega\times(0,T);V)}^{2q}],
\end{align*}
 Using this inequality 
and choosing   $\delta$ small enough in \eqref{zn-W-estimate-p}, we are able  to apply Gr\"onwall's inequality to show the existence of a constant $C:=C(\alpha_1,\alpha_2,\beta,\nu,T,M,q,\delta,\epsilon)>0$ such that 
\begin{align*}
\E\sup_{s\in [0, t\wedge\mathbf{t}_N^n]}\Vert z_n(s)\Vert_{W}^{2q} &\leq 
C\E\int_0^{\tau_M}\Vert \psi\Vert_{2}^{2q}ds+ C_\epsilon(D)\Vert y\Vert_{L^{2q(d+1+\epsilon)}(\Omega\times(0,\tau_M);H^3)}^{2q(d+\epsilon)}\Vert z_n\Vert_{L^{2q(d+1+\epsilon)}(\Omega\times(0,T);V)}^{2q}.
\end{align*} 
In particular, for $q=1$ and $t=T$ we obtain 
\begin{align*}
\E\sup_{s\in [0, \mathbf{t}_N^n]}\Vert z_n(s)\Vert_{W}^{2} &\leq 
C\E\int_0^{\tau_M}\Vert \psi\Vert_{2}^{2}ds+ C_\epsilon(D)\Vert y\Vert_{L^{2(d+1+\epsilon)}(\Omega\times(0,\tau_M);H^3)}^{2(d+\epsilon)}\Vert z_n\Vert_{L^{2(d+1+\epsilon)}(\Omega\times(0,T);V)}^{2}.
\end{align*} 
 From  Subsection \ref{subsectzn-V}, we know that
 $
 \displaystyle\E\sup_{s\in [0,T]}\Vert z_n(s)\Vert_{V}^{p} \leq C(M)\E\int_0^{\tau_M}\Vert \psi\Vert_{2}^{p}ds, \quad \forall p \geq 2.$
 
Since	$p>2(d+1)$, for $\epsilon=\dfrac{p-2(d+1)}{2}$, there exists $C(M)>0$ such that
$$C_\epsilon(D)\Vert y\Vert_{L^{2(d+1+\epsilon)}(\Omega\times(0,\tau_M);H^3)}^{2(d+\epsilon)}\Vert z_n\Vert_{L^{2(d+1+\epsilon)}(\Omega\times(0,T);V)}^{2} \leq C(M).$$
 Since  $(\mathbf{t}_N^n)_N$ is  non-decreasing positive sequence of stopping time and $\mathbf{t}_N^n \to T$ in probability, the monotone convergence theorem  ensures 
 $\displaystyle\E\sup_{s\in [0, T]}\Vert z_n(s)\Vert_{W}^{2} \leq 
 C(\alpha_1,\alpha_2,\beta,\nu,T,M).$	Thus

\begin{prop}\label{prop-estimate-zn} Let $z_n$ be the solution to  \eqref{approximation}. Then, $z_n\in \mathcal{C}([0,T],W_n)$ and there exist $ C(M)>0$ and $ C(\alpha_1,\alpha_2,\beta,\nu,T,M)>0$, independents of $n$ such that
	\begin{align*}
	\E\sup_{s\in [0,T]}\Vert z_n(s)\Vert_{V}^{p} \leq C(M)\E\int_0^{\tau_M}\Vert \psi\Vert_{2}^{p}ds	\text{	and	}
	\E\sup_{s\in [0, T]}\Vert z_n(s)\Vert_{W}^{2} \leq 
	C(\alpha_1,\alpha_2,\beta,\nu,T,M).
	\end{align*}
\end{prop}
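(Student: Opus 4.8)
The plan is to read off both uniform bounds from the two sets of computations already carried out on the Galerkin system \eqref{approximation}, so the role of this proposition is mainly to record that the constants produced there do not depend on the dimension $n$. The pathwise continuity $z_n\in\mathcal{C}([0,T],W_n)$ is immediate from \eqref{local-time-lin}, so the substance lies in the two a priori estimates, which I would prove separately by testing \eqref{approximation} against the basis elements, applying It\^o's formula, and closing with the Burkholder--Davis--Gundy (BDG) inequality and Gr\"onwall's lemma.

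For the $V$-estimate I would set $\phi=h_i$ in \eqref{approximation}, apply It\^o's formula to $x\mapsto x^2$ for the coefficient $(z_n,h_i)_V$, and sum over $i=1,\dots,n$ to obtain an identity for $\Vert z_n(t)\Vert_V^2$. The convective terms are handled by the antisymmetry of $b$ and Lemma \ref{tecnical-lemma}, and on $[0,\tau_M[$ the bound $\Vert y\Vert_{W^{2,4}}\le M$ renders all stress and transport contributions bounded by $C(1+M^2)\int_0^t\mathbb{I}_{[0,\tau_M[}\Vert z_n\Vert_V^2\,ds$; the purely cubic term $\beta(|A(y)|^2A(z_n),\nabla z_n)$ equals $\beta\int|A(y)|^2|A(z_n)|^2\,dx\ge0$ and is discarded, while the It\^o correction from the noise is dominated by $\int_0^t\mathbb{I}_{[0,\tau_M[}\Vert z_n\Vert_2^2\,ds$ via \eqref{noiseV} and the regularity of the Stokes solver \eqref{Stokes}. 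Raising to the $p$-th power, localizing with the stopping times $\mathbf{t}_N^n$, absorbing the martingale term by BDG with a small parameter, and applying Gr\"onwall then gives the first bound; monotone convergence as $N\to\infty$ removes the localization.

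For the $W$-estimate the strategy is parallel but heavier. First I would use Lemma \ref{Lemma 3.1} to recast the four transport contributions as $(\mathrm{curl}\,v(y)\times z_n+\mathrm{curl}\,v(z_n)\times y,\cdot)$, then test with $h_i$, multiply by $\mu_i$ and invoke \eqref{basis1} to pass from the $V$- to the $W$-inner product, introducing the auxiliary Stokes solutions $\widetilde f_n$ and $\tilde\sigma^n_\k$ of \eqref{Stokes} so that every right-hand side term becomes a $W$-inner product. After It\^o's formula in $W$ and the splitting via $\mathbb{P}$, all contributions except one are absorbed into $C(\Vert y\Vert_{W^{2,4}}^2+1)\Vert z_n\Vert_W^2$ and $\Vert\psi\Vert_2^2$; the delicate term is $2\Vert z_n\Vert_\infty\Vert y\Vert_{H^3}\Vert z_n\Vert_W$ coming from the cubic stress paired with $\mathbb{P}v(z_n)$, and this is precisely where the estimate cannot close on $\Vert z_n\Vert_W$ alone.

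The hard part will be controlling this term, and it is the crux of the whole argument. I would apply the interpolation Lemma \ref{interpolation-estimate-lem} to bound $\E\int(\Vert z_n\Vert_\infty\Vert y\Vert_{H^3}\Vert z_n\Vert_W)^q\,ds$ by a small multiple of $\int\E\sup\Vert z_n\Vert_W^{2q}$ plus the product $\Vert y\Vert_{L^{2q(d+1+\epsilon)}(\Omega\times(0,\tau_M);H^3)}^{2q(d+\epsilon)}\Vert z_n\Vert_{L^{2q(d+1+\epsilon)}(\Omega\times(0,T);V)}^{2q}$. Taking $q=1$ and choosing $\epsilon=\frac{p-2(d+1)}{2}$, which is strictly positive exactly because $p>2(d+1)$, makes the last factor finite: the $H^3$-norm of $y$ is controlled by Proposition \ref{Estimate-y-proposition}, and the $V$-norm of $z_n$ by the first estimate raised to the exponent $p$. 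Choosing $\delta$ small in the BDG step to absorb the $\Vert z_n\Vert_W^{2q}$ and $\Vert\mathbb{P}v(z_n)\Vert_2^{2q}$ terms, then applying Gr\"onwall and passing $N\to\infty$ by monotone convergence, yields the second bound. Every constant depends only on $M$, the material and noise parameters, $T$, and the $V$-estimate, none of which involve $n$, so both bounds are uniform in the Galerkin dimension.
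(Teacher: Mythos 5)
Your proposal is correct and follows essentially the same route as the paper: Itô's formula on the Galerkin coefficients, positivity of the $\beta(|A(y)|^2A(z_n),\nabla z_n)$ term, the special basis \eqref{basis1} with the auxiliary Stokes solutions to pass to the $W$-inner product, and Lemma \ref{interpolation-estimate-lem} with $\epsilon=\frac{p-2(d+1)}{2}$ to close the $W$-estimate. The only slip is attributional: the delicate term $\Vert z_n\Vert_\infty\Vert y\Vert_{H^3}\Vert z_n\Vert_W$ arises from the transport term $(z_n\cdot\nabla)v(y)$ paired with $\mathbb{P}v(z_n)$ (three derivatives falling on $y$), not from the cubic stress, which is absorbed into $C(1+\Vert y\Vert_{W^{2,4}}^2)\Vert z_n\Vert_W^2$; this does not affect the argument.
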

\subsection{Proof of Theorem \ref{exis-z+estim}}
\subsubsection{$1^{st} step$}
Using  Proposition \ref{prop-estimate-zn}, 
	we deduce the existence of a subsequence of  $z_n$, still denoted 
	by  $z_n$,  and a predictable stochastic process $z \in  L^2(\Omega, L^q(0,T;W))$ such that 
	\begin{align}\label{cv-zn-weak-1}
	z_n &\rightharpoonup z \text{  weakly  in } L^2(\Omega, L^q(0,T;W)),\quad \text{  for any }2\leq q<\infty.
	\end{align} 
	On the other hand, the sequence  $(z_n)$ is bounded in $L^2(\Omega, L^\infty(0,T;W))$, thus in $ L_w^2(\Omega, L^\infty(0,T;W))\simeq( L^2(\Omega, L^1(0,T;W^\prime)))^\prime$, where $w$ stands for the weak-* measurability (see  e.g. \cite[Thm. 8.20.3]{Edwards}, \cite[Rmq. 2.1]{Vallet-Zimmermann}). Hence,  Banach–Alaoglu theorem's ensures  $z\in  L_w^2(\Omega, L^\infty(0,T;W) $.

		Recall	that	the	stochastic	It\^o	integral	is linear and continuous. 
 Thanks to \cite[Prop. 21.27 p. 261]{Zeidler} and \eqref{cv-zn-weak-1}, we infer that 
 \begin{align}\label{stochastic-cv-zn}
 \int_0^{t}\mathbb{I}_{[0,\tau_M[}(\nabla_yG(\cdot,y)z_n,\phi)d\mathcal{W}  &\rightharpoonup \int_0^{t}\mathbb{I}_{[0,\tau_M[}(\nabla_yG(\cdot,y)z,\phi)d\mathcal{W} \text{ weakly  in } L^2(\Omega\times [0,T]).
 \end{align}
\subsubsection{$2^{nd} step$} 
For  $t\in [0,T]$, let us  set $$B_n(t) :=v(z_n(t))-\displaystyle\int_0^{t}\mathbb{I}_{[0,\tau_M[}\nabla_yG(\cdot,y)z_nd\mathcal{W},\quad B(t) :=v(z(t))-\displaystyle\int_0^{t}\mathbb{I}_{[0,\tau_M[}\nabla_yG(\cdot,y)zd\mathcal{W}.$$ Due to 
  \eqref{approximation} we have 
\begin{align}\label{zn-eqn-1}
&\dfrac{d}{dt}(B_n,\phi)=-\mathbb{I}_{[0,\tau_M[}\{2\nu(\mathbb{D} z_n,\mathbb{D}\phi)+b(y,v(z_n),\phi)+b(z_n,v(y),\phi)+b(\phi,y,v(z_n))+b(\phi,z_n,v(y))\}\nonumber\\
&\quad-\mathbb{I}_{[0,\tau_M[}\{(\alpha_1+\alpha_2)\big(A(y)A(z_n)+A(z_n)A(y),\nabla \phi\big) 
+\beta\big( |A(y)|^2A(z_n), \nabla\phi\big)\}\\&\qquad-2\beta \mathbb{I}_{[0,\tau_M[}\big((A(z_n):
A(y))A(y),\nabla\phi\big)+\mathbb{I}_{[0,\tau_M[}(\psi,\phi),
\quad \forall \phi \in W_n.\nonumber
\end{align}
Let $A\in \mathcal{F}$ and $\xi\in  \mathcal{D}(0,T)$\footnote{$\mathcal{D}(0,T)$ denotes the space of $\mathcal{C}^\infty$-functions with compact suport in $]0,T[$.}, by multiplying \eqref{zn-eqn-1} by $\mathbb{I}_A\xi$ and integrating  over $\Omega_T $ we derive 
\begin{align}\label{passlimitzn}
&-\int_A\int_0^T\bigl[(B_n,\phi)\dfrac{d\xi}{ds}\bigr]dsdP\\
&=-\int_A\int_0^T\mathbb{I}_{[0,\tau_M[}\{2\nu(\mathbb{D} z_n,\mathbb{D}\phi)+b(y,v(z_n),\phi)+b(z_n,v(y),\phi)+b(\phi,y,v(z_n))+b(\phi,z_n,v(y))\}\xi dsdP\nonumber\\
&\quad-\int_A\int_0^T\mathbb{I}_{[0,\tau_M[}\{(\alpha_1+\alpha_2)\big(A(y)A(z_n)+A(z_n)A(y),\nabla \phi\big) 
+\beta\big( |A(y)|^2A(z_n), \nabla\phi\big)\}\xi dsdP\nonumber\\&\qquad-2\beta \int_A\int_0^T\mathbb{I}_{[0,\tau_M[}\big((A(z_n):
A(y))A(y),\nabla\phi\big)\xi dsdP+\int_A\int_0^T\mathbb{I}_{[0,\tau_M[}(\psi,\phi)\xi dsdP,
\quad \forall \phi \in W_n.\nonumber
\end{align}
The convergences  \eqref{cv-zn-weak-1}-\eqref{stochastic-cv-zn} allow
to pass to the limit, as $n\to \infty$, in \eqref{passlimitzn} and deduce
\begin{align}\label{trace-time-0zn}
&-\int_A\int_0^T\bigl[(B,\phi)\dfrac{d\xi}{ds}\bigr]dsdP\\
&=-\int_A\int_0^T\mathbb{I}_{[0,\tau_M[}\{2\nu(\mathbb{D} z,\mathbb{D}\phi)+b(y,v(z),\phi)+b(z,v(y),\phi)+b(\phi,y,v(z))+b(\phi,z,v(y))\}\xi dsdP\nonumber\\
&\quad-\int_A\int_0^T\mathbb{I}_{[0,\tau_M[}\{(\alpha_1+\alpha_2)\big(A(y)A(z)+A(z)A(y),\nabla \phi\big) 
+\beta\big( |A(y)|^2A(z), \nabla\phi\big)\}\xi dsdP\\&\qquad-2\beta \int_A\int_0^T\mathbb{I}_{[0,\tau_M[}\big((A(z):
A(y))A(y),\nabla\phi\big)\xi dsdP+\int_A\int_0^T\mathbb{I}_{[0,\tau_M[}(\psi,\phi)\xi dsdP,
\quad \forall \phi \in V.\nonumber
\end{align}
Then, taking into account the result of Proposition \ref{prop-estimate-zn}, we infer that the distributional derivative $\dfrac{dB}{dt}$ belongs to the space $L^2(\Omega;L^2(0,T;V^\prime).$ Recalling that $B\in L^2(\Omega;L^2(0,T;L^2(D))$, we conclude that 
(see \cite{Simon})  
 $B(\cdot)=v(z(\cdot))-\displaystyle\int_0^{\cdot}\mathbb{I}_{[0,\tau_M[}\nabla_yG(s,y)zd\mathcal{W}(s)\in L^2(\Omega;\mathcal{C}([0,T];V^\prime).$

Considering the properties of the It\^o's integral, we conclude that 
 $ v(z)\in L^2(\Omega;\mathcal{C}([0,T];V^\prime).$ Thus $v(z)\in L^2(\Omega;\mathcal{C}_w([0,T];(L^2(D))^d),$ thanks to \cite[Lemma. 1.4 p. 263]{Temam77}.  
In order to identify the initial datum, let $\phi \in  V$ and $\xi \in \mathcal{C}^\infty([0,t])$ for $t\in ]0,T]$ and note that the	following	integration	by	parts	formula	holds
\begin{align}\label{IPPtimez}
\int_0^t\langle	\dfrac{dB(s)}{ds},\phi\xi\rangle_{V^*,V} ds&=-\int_0^t[(B(s),\phi)\dfrac{d\xi}{ds}]ds+(B(t),\phi)\xi(t)-(z(0),\phi)_V\xi(0)
\end{align}
	First,	we	multiply \eqref{zn-eqn-1} by $\mathbb{I}_A\xi$	and integrate over $\Omega_T$.	Then,	we	pass	to	the	limit	as	$n\to\infty$	and	we	use	\eqref{IPPtimez} to obtain 
	 $\text{for all }t\in [0,T],\quad 
	 v(z_n(t)) \rightharpoonup v(z(t)) \text{ in } L^2(\Omega,(L^2(D))^d), \text{ as } n\to \infty,$
(see	e.g.	\cite[Prop.	3]{Val-Zim19} for similar arguments).
Hence the  proof of Theorem \ref{exis-z+estim} is completed.

 \section{Gâteaux differentiability of the control-to-state mapping}\label{Sec6}

 
 In	this	section,	we will prove that
the Gâteaux derivative of the control-to-state mapping is provided by the solution of the linearized equations, given by \eqref{Linearized}.
\begin{prop}\label{prop-gateux-diff}
	Let us consider $U$ and $y_0$ satisfying \eqref{data-assumptions} and $\psi\in L^p((\Omega_T, \mathcal{P}_T);(H^1(D))^d)$. Defining
	$ U_\rho=U+\rho\psi, \quad \rho \in (0,1),$
	let $(y,\tau_M)$ and $(y_\rho,\tau_M^\rho)$ be  the solutions of \eqref{I} associated with $(U,y_0)$ and $(U_\rho,y_0)$, respectively, then the following representaion holds
	\begin{align}\label{representationz}
	y_\rho=y+\rho z+\rho \delta_\rho \;\text{ with } \;
	\displaystyle\lim_{\rho\to 0}	\E\sup_{s\in [0, \tau_M\wedge \tau_M^\rho]}\Vert \delta_\rho(s)\Vert_{V}^2=0,
	\end{align}
	where $z$	is the solution of  \eqref{Linearized}, in the sense of Definition \ref{Def-lin} and  satisfying the estimate \eqref{estimate-z}.
\end{prop}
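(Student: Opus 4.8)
The plan is to run a $V$-energy estimate on the defect $\delta_\rho := \rho^{-1}(y_\rho-y)-z$, paralleling the $V$-stability Lemma \ref{Lemma-V-stability}, and to show that the resulting source terms are $o(1)$ as $\rho\to0$ by means of the stability estimates of Corollary \ref{cor-rest-0}. Write $w_\rho := y_\rho - y$, so that $w_\rho = \rho z + \rho\delta_\rho$ holds by definition and $\delta_\rho(0)=0$, since $y_\rho(0)=y(0)=y_0$ and $z(0)=0$; all computations are carried out on the random interval $[0,\tau_M\wedge\tau_M^\rho]$, where the indicator $\mathbb{I}_{[0,\tau_M[}$ of \eqref{Linearized} equals one and where $\Vert y\Vert_{W^{2,4}},\Vert y_\rho\Vert_{W^{2,4}}\le M$. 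Subtracting \eqref{I} for $y$ from \eqref{I} for $y_\rho$, dividing by $\rho$, and subtracting the linearized system \eqref{Linearized}, one obtains an equation for $v(\delta_\rho)$ whose drift I would organize, for each deterministic nonlinear term $N$ of the state equation, via the exact identity
$$\frac{N(y_\rho)-N(y)}{\rho}-N'(y)z \;=\; N'(y)\delta_\rho\;+\;\frac{1}{\rho}\big(N(y_\rho)-N(y)-N'(y)w_\rho\big),$$
and, for the noise, via $\mathcal{H}_0$: applying \eqref{Gateaux-derivative-noise} pointwise with step $\delta=|w_\rho(x)|$ gives
$$\frac{\sigma_\k(\cdot,y_\rho)-\sigma_\k(\cdot,y)}{\rho}-\nabla_y\sigma_\k(\cdot,y)z \;=\; \nabla_y\sigma_\k(\cdot,y)\delta_\rho\;+\;\frac{1}{\rho}|w_\rho|\,R_\k^{|w_\rho|},$$
with $\big|\rho^{-1}|w_\rho|R_\k^{|w_\rho|}\big|\le b_\k\,\rho^{-1}|w_\rho|^{1+\gamma}$ and $\sum_\k b_\k^2<\infty$.

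Next I would apply It\^o's formula to $\Vert\delta_\rho\Vert_V^2=(v(\delta_\rho),\delta_\rho)$. The contributions of the linear pieces $N'(y)\delta_\rho$ and $\nabla_y\sigma_\k(\cdot,y)\delta_\rho$ are treated exactly as the corresponding terms in Lemma \ref{Lemma-V-stability} and in the $V$-estimate of Subsection \ref{subsectzn-V}: after using the antisymmetry of $b$, Lemma \ref{tecnical-lemma}, the embedding $W^{2,4}(D)\hookrightarrow W^{1,\infty}(D)$ and \eqref{noiseV}, they are bounded by $C(M)\int_0^t\Vert\delta_\rho\Vert_V^2\,ds$, together with the It\^o correction which, by \eqref{noiseV} and the Stokes estimate, is controlled by $C\Vert\delta_\rho\Vert_2^2$; all of these are ready for Gr\"onwall. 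The stochastic integral is handled by the Burkholder--Davis--Gundy inequality with a small parameter absorbed into $\E\sup_{[0,t\wedge\tau_M\wedge\tau_M^\rho]}\Vert\delta_\rho\Vert_V^2$, precisely as in the proofs of Lemma \ref{Lemma-V-stability} and Theorem \ref{exis-z+estim}.

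The heart of the matter is that the two remainder sources tend to $0$. Since the deterministic nonlinearities of \eqref{I} are polynomials of degree at most three in $y$ and its derivatives (the cubic one being $\beta\,\mathrm{div}(|A|^2A)$), the Taylor remainder $\rho^{-1}(N(y_\rho)-N(y)-N'(y)w_\rho)$ is a finite sum of terms quadratic and cubic in $w_\rho$ divided by $\rho$; tested against $\delta_\rho$ and bounded by H\"older's inequality together with the interpolation $\Vert\nabla w_\rho\Vert_4\lesssim\Vert w_\rho\Vert_V^{1/2}\Vert w_\rho\Vert_W^{1/2}$, these are controlled by products in which $w_\rho$ appears in the cheap norm $\Vert w_\rho\Vert_V$, the surplus derivatives being routed onto $y$ (through $\Vert y\Vert_{H^3}$) and onto $z$ (through $\Vert z\Vert_W$) exactly as in Proposition \ref{local-stability-thm}, the borderline products being absorbed by the interpolation Lemma \ref{interpolation-estimate-lem}. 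The noise remainder contributes $\rho^{-2}\sum_\k b_\k^2\,\E\int_0^{\tau_M\wedge\tau_M^\rho}\Vert\,|w_\rho|^{1+\gamma}\Vert_2^2\,ds\lesssim\rho^{-2}\,\E\int\Vert w_\rho\Vert_{H^1}^{2(1+\gamma)}\,ds$. Invoking the two stability bounds of Corollary \ref{cor-rest-0}, namely $\E\sup\Vert w_\rho\Vert_V^{q}\le C(M)\rho^{q}$ for every $q\ge2$ and $\E\sup\Vert w_\rho\Vert_W^{2+\epsilon}\le K(M,\epsilon)\rho^{2+\epsilon}$, together with the uniform $H^3$-bound on $y$ from Proposition \ref{Estimate-y-proposition} (which requires $p>2(d+1)$), every remainder is seen to be $O(\rho^{\kappa})$ for some $\kappa>0$, hence $o(1)$.

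Finally, Gr\"onwall's inequality applied to the resulting relation
$$\E\sup_{s\in[0,t\wedge\tau_M\wedge\tau_M^\rho]}\Vert\delta_\rho\Vert_V^2\le C(M)\int_0^t\E\sup_{r\in[0,s\wedge\tau_M\wedge\tau_M^\rho]}\Vert\delta_\rho\Vert_V^2\,ds+o(1)$$
yields $\E\sup_{[0,\tau_M\wedge\tau_M^\rho]}\Vert\delta_\rho\Vert_V^2\le C(M)\,o(1)\to0$ as $\rho\to0$, which is \eqref{representationz}. I expect the main obstacle to be the control of the deterministic remainder: a naive H\"older bound of the quadratic and cubic leftovers forces either a fourth power of $\Vert w_\rho\Vert_W$ or a high moment of $\Vert z\Vert_W$, neither of which is available under the sole assumption $p>2(d+1)$ (Corollary \ref{cor-rest-0} provides $\Vert w_\rho\Vert_W$ only at order $2+\epsilon$, and Theorem \ref{exis-z+estim} only $\Vert z\Vert_W\in L^2(\Omega)$). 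Overcoming this is exactly where one must keep $w_\rho$ in the norm $\Vert\cdot\Vert_V$, push the extra derivatives onto the high-integrability $H^3$-norm of $y$, and invoke Lemma \ref{interpolation-estimate-lem}, mirroring the delicate balance already exploited in Proposition \ref{local-stability-thm} and Corollary \ref{cor-rest-0}.
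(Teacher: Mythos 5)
Your proposal is correct and follows the same skeleton as the paper's proof: set $\delta_\rho=\rho^{-1}(y_\rho-y)-z$, run an It\^o/energy estimate on $\Vert\delta_\rho\Vert_V^2$ over $[0,\tau_M\wedge\tau_M^\rho]$, absorb the linear-in-$\delta_\rho$ drift and noise contributions via Lemma \ref{tecnical-lemma}, \eqref{noiseV} and Burkholder--Davis--Gundy, and close with Gr\"onwall once the source terms are shown to be $o(1)$. The difference lies in the bookkeeping of the remainders. The paper does not form the Taylor remainder $\rho^{-1}(N(y_\rho)-N(y)-N'(y)w_\rho)$; instead it splits each multilinear difference exactly so that every leftover term is \emph{bilinear in $(z,\,y_\rho-y)$} (e.g. $A(z)A(y_\rho-y)$, $b(z,v(y_\rho-y),\delta_\rho)$), and all such terms are bounded by the single quantity $C\Vert z\Vert_{H^2}^2\Vert y_\rho-y\Vert_{H^2}^2$. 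Its vanishing is then obtained not by a direct H\"older balance but by introducing the stopping time $\tau_N^z=\inf\{t:\int_0^t\Vert z\Vert_{H^2}^2ds\geq N\}\wedge\tau_M$, using the $(2+\epsilon)$-order $W$-stability of Corollary \ref{cor-rest-0} on $[0,\tau_N^z]$, and letting $N\to\infty$ by monotone convergence; this sidesteps the need for any high moment of $\Vert y_\rho-y\Vert_W$ or $\Vert z\Vert_W$, which is exactly the obstruction you flag at the end. Your organization (remainders quadratic/cubic in $w_\rho$ divided by $\rho$, estimated by keeping one factor in $\Vert w_\rho\Vert_W$ and the rest in $\Vert w_\rho\Vert_V$ or in $L^\infty$-type norms bounded by $M$) also closes, since Corollary \ref{cor-rest-0} gives $\rho^{-2}\Vert w_\rho\Vert_V^2\Vert w_\rho\Vert_W^2=O(\rho^{2})$ in expectation after H\"older with exponents $1+\epsilon/2$ and $(2+\epsilon)/\epsilon$; it trades the stopping-time argument for a slightly more delicate H\"older balance. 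One small caveat on the noise: applying \eqref{Gateaux-derivative-noise} with step $\delta=|w_\rho(x)|$ produces the bound $\rho^{-2}\E\int\Vert\,|w_\rho|^{1+\gamma}\Vert_2^2ds$, whose reduction to $\Vert w_\rho\Vert_{H^1}^{2(1+\gamma)}$ needs $H^1\hookrightarrow L^{2(1+\gamma)}$ (a restriction on $\gamma$ when $d=3$, repairable using $|w_\rho|\leq CM$). The paper instead applies $\mathcal{H}_0$ with $\delta=\rho$ and $v=z_\rho$, obtaining the remainder bound $b_\k|z_\rho|\rho^{\gamma}$ directly and hence $C\Vert z\Vert_2^2\rho^{2\gamma}+C\Vert\delta_\rho\Vert_2^2$, which is cleaner and free of that constraint.
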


\begin{proof}

 Let $t\in [0,\tau_M\wedge \tau_M^\rho]$,  by using \eqref{I}, we 
derive
\begin{align*}
&d(v(y_\rho-y))=\{-\nabla ( \mathbf{ P}_\rho-\mathbf{ P})+\nu \Delta (y_\rho-y)-((y_\rho\cdot \nabla)v_\rho-(y\cdot \nabla)v)-\sum_{j}(v_\rho^j\nabla y_\rho^j-v^j\nabla y^j)
\\&+(\alpha_1+\alpha_2)\text{div}(A_\rho^2-A^2)+\beta\text{ div} \left ( |A(y_\rho)|^2A(y_\rho)-|A(y)|^2A(y)\right)+\rho\psi\}dt+ (G(\cdot,y_\rho)-G(\cdot,y))d\mathcal{W},
\end{align*}
	where
	$	v_\rho=v(y_\rho),\quad v=v(y),\quad A_\rho=A(y_\rho),\quad A=A(y).$\\
	Setting $z_\rho=\dfrac{y_\rho-y}{\rho}$,
	$\pi_\rho=\dfrac{ \mathbf{ P}_\rho- \mathbf{ P}}{\rho}$,
	we  notice that  $z_\rho$ is the unique solution to	
	$$
	\begin{array}{ll}
	&d(v(z_\rho))=\big\{\psi-\nabla \pi_\rho+\nu \Delta z_\rho-\left[(z_\rho\cdot\nabla) v(y_\rho)+(y\cdot\nabla) v(z_\rho)\right]\vspace{2mm}\\
	&-\sum_{j}[v^j(z_\rho)\nabla y_\rho^j+v^j(y)\nabla z_\rho^j]+\beta\text{div} \bigl( |A(y_\rho)|^2A(z_\rho)
	+[A(z_\rho):A(y_\rho)\vspace{2mm}\\
	&+A(y):A(z_\rho)]A(y)\bigr)\vspace{2mm}+(\alpha_1+\alpha_2)\text{div}
	\left[A(z_\rho)A(y_\rho)+A(y)A(z_\rho)\right]\big\}dt+\dfrac{1}{\rho}(G(\cdot,y_\rho)-G(\cdot,y))d\mathcal{W}.
	\end{array}
	$$
	Defining   $\delta_\rho=z_\rho-z$, the following equation  hold
	\begin{equation*}
	\begin{array}{ll}
	&	d(v(\delta_\rho))=\big\{-\nabla (\pi_\rho-\pi)+\nu \Delta \delta_\rho-
	\left[(y\cdot\nabla) v(\delta_\rho)+(\delta_\rho\cdot \nabla)v(y_\rho)\right]-(z\cdot\nabla) v(y_\rho-y)
	\vspace{2mm}\\
	&-\sum_{j}[v^j(y)\nabla\delta_\rho^j+v^j(\delta_\rho)\nabla y_\rho^j+v^j(z)\nabla (y_\rho-y)^j]+(\alpha_1+\alpha_2)\text{div}\bigl[A(y)A(\delta_\rho)+A(\delta_\rho)A(y_\rho)\vspace{2mm}\\
	&+A(z)A(y_\rho-y)\bigr]+\beta\text{ div} \left\{\left( A(y):A(\delta_\rho)\right)A(y)+ |A(y_\rho)|^2A(\delta_\rho)\right\}\vspace{2mm}\\
	&+\beta\text{ div} (\left[ A(y_\rho-y): A(y_\rho)+A(y): A(y_\rho-y)\right]A(z)])\vspace{2mm}\\
	&
	+\beta\text{ div} \left[ \left(A(\delta_\rho):A(y_\rho)+A(z): A(y_\rho-y)\right)A(y)\right]\big\}dt\\
	&+ \{\dfrac{1}{\rho}(G(\cdot,y_\rho)-G(\cdot,y))- \nabla_yG(\cdot,y)z\}d\mathcal{W}=:g(\delta_\rho)dt+Rd\mathcal{W}=g(\delta_\rho)dt+\sum_{ \k\geq 1} R_\k d\beta_\k.
	\end{array}
	\end{equation*}
	By applying  $(I-\alpha_1\mathbb{P}\Delta)^{-1}$ to the	last	equations and  using It\^o formula for $\Vert \delta_\rho\Vert_V^2$, we deduce
					\begin{align}
	\label{15_2}
d\Vert\delta_\rho\Vert_V^2&=2(g(\delta_\rho),\delta_\rho)dt+2(R,\delta_\rho)d\mathcal{W}+\sum_{ \k\geq 1}\Vert \tilde{\sigma}_k^\rho- \tilde{\sigma}_k^z\Vert_V^2dt,
	\end{align}
	where $\tilde{\sigma}_k^\rho$ and $\tilde{\sigma}_k^z$ are the solutions  of \eqref{Stokes} with $f$ replaced   by $\{\dfrac{1}{\rho}(\sigma_\k(\cdot,y_\rho)-\sigma_\k(\cdot,y))$ and $\nabla_y\sigma_\k(\cdot,y)z$, respectively. Thanks to (\cite[Theorem 3]{Busuioc}), there exists   $C>$ such that
	\begin{align*}
		\sum_{ \k\geq 1}\Vert \tilde{\sigma}_k^\rho- \tilde{\sigma}_k^z\Vert_V^2 \leq C\sum_{ \k\geq 1}\Vert \dfrac{1}{\rho}(\sigma_\k(\cdot,y_\rho)-\sigma_\k(\cdot,y))- \nabla_y\sigma_\k(\cdot,y)z\Vert_2^2
	\end{align*}
Using the assumption $\mathcal{H}_0$ (see \eqref{Gateaux-derivative-noise}), we have  
	\begin{align*}
	\dfrac{1}{\rho}(\sigma_\k(\cdot,y_\rho)-\sigma_\k(\cdot,y))=
	\dfrac{1}{\rho}[\sigma_\k(\cdot,y+\rho z_\rho)-\sigma_\k(\cdot,y)]=\nabla_y\sigma_\k(\cdot,y)z_\rho+R_\k^\rho(\cdot, z_\rho), \quad \forall \k \geq 1.
	\end{align*}
	Therefore
	\begin{align*}
	\sum_{ \k\geq 1}\Vert \tilde{\sigma}_k^\rho- \tilde{\sigma}_k^z\Vert_V^2 &\leq C\sum_{ \k\geq 1}\Vert \nabla_y\sigma_\k(\cdot,y)\delta_\rho+R_\k^\rho(\cdot, z_\rho)\Vert_2^2\leq  C \Vert \delta_\rho\Vert_{2}^2+\sum_{ \k\geq 1}b_\k^2\Vert z_\rho\Vert_{2}^2\rho^{2\gamma}\\
	&\leq  C \Vert \delta_\rho\Vert_{2}^2+C\Vert z+\delta_\rho\Vert_{2}^2\rho^{2\gamma} \leq C \Vert \delta_\rho\Vert_{2}^2+C\Vert z\Vert_{2}^2\rho^{2\gamma},
	\end{align*}
	where we used  \eqref{noiseV} and \eqref{Gateaux-derivative-noise} to get the last estimate. In order to estimate the first term in the right hand side of \eqref{15_2}, let us write
	$	2(g(\delta_\rho),\delta_\rho)
	=-4\nu \Vert \mathbb{D}\delta_\rho\Vert_{2}^2+R_1+R_2+R_3,$ where
	\begin{equation*}
	\begin{array}{ll}
R_1
	&=2b(y,\delta_\rho,v(\delta_\rho))-2b(\delta_\rho,y_\rho,v(\delta_\rho))-[2b(\delta_\rho,v(y_\rho),\delta_\rho)+2b(z,v(y_\rho-y),\delta_\rho)]\vspace{2mm}\\&\quad-2b(\delta_\rho,\delta_\rho,v(y))-2b(\delta_\rho,(y_\rho-y),v(z)).
	\end{array}
	\end{equation*}
	Lemma \ref{tecnical-lemma} 	ensures
$		b(\delta_\rho,y_\rho,v(\delta_\rho)) \leq C \Vert y_\rho\Vert_{W^{2,4}}\Vert \delta_\rho\Vert_{V}^2 \text{  and } \vert  b(y,\delta_\rho,v(\delta_\rho))\vert \leq C  \Vert y\Vert_{W^{2,4}}\Vert \delta_\rho\Vert_{V}^2.$
 Consequently, with the help of H\"older inequality we obtain
	\begin{equation*}
	\begin{array}{ll}
	R_1\leq C(\Vert y\Vert_{W^{2,4}}+\Vert y_\rho\Vert_{W^{2,4}})\Vert \delta_\rho\Vert_V^2+C\Vert \delta_\rho\Vert_V^2+C\Vert z\Vert_{H^2}^2\Vert y_\rho-y\Vert_{H^2}^2.
	\end{array}
	\end{equation*}
	Using the Stokes theorem and the boundary conditions for  $\delta_\rho$, we deduce
	\begin{align*}
R_2	&=-2(\alpha_1+\alpha_2)\int_D[A(y)A(\delta_\rho)+A(\delta_\rho)A(y_\rho)+A(z)A(y_\rho-y)]:\nabla\delta_\rho dx\vspace{2mm}\\
	&\leq C\Vert y\Vert_{W^{1,\infty}}\Vert \delta_\rho\Vert_V^2+C\Vert y_\rho\Vert_{W^{1,\infty}}\Vert \delta_\rho\Vert_V^2+C\Vert z\Vert_{W^{1,4}}\Vert y_\rho-y\Vert_{W^{1,4}}\Vert\delta_\rho\Vert_V\vspace{2mm}\\
	&\leq C(\Vert y\Vert_{W^{2,4}}+\Vert y_\rho\Vert_{W^{2,4}})\Vert \delta_\rho\Vert_V^2+C\Vert \delta_\rho\Vert_V^2+C\Vert z\Vert_{H^2}^2\Vert y_\rho-y\Vert_{H^2}^2.\\
	&\hspace*{-2cm}\text{	Analogous  arguments give 	}R_3
	=-2\beta\int_D \left\{A(y):A(\delta_\rho)A(y)+ |A(y_\rho)|^2A(\delta_\rho)\right\}:\nabla\delta_\rho dx\\
	& -2\beta\int_D\left\{\left[ A(y_\rho-y):A(y_\rho)+A(y): A(y_\rho-y)\right]A(z)\right\}:\nabla\delta_\rho dx\\
	&\quad
	-2\beta\int_D \left[ \left(A(\delta_\rho):A(y_\rho)+A(z): A(y_\rho-y)\right)A(y)\right]:\nabla\delta_\rho dx\\
	&\leq C\left(\Vert y\Vert_{W^{2,4}}^2+\Vert y_\rho\Vert_{W^{2,4}}^2\right)\Vert \delta_\rho\Vert_V^2+C\Vert z\Vert_{H^2}^2\Vert y_\rho-y\Vert_{H^2}^2,
	\end{align*}
	where we used H\"older and Young inequalities to deduce the last estimate. Summing up, we obtain the existence of constant $C_* >0$ such that
	\begin{align}\label{rest-step1}
	d\Vert \delta_\rho\Vert_V^2+4\nu \Vert \mathbb{D}\delta_\rho\Vert_{2}^2dt&\leq 
	C_*\left(1+\Vert y\Vert_{W^{2,4}}^2+\Vert y_\rho\Vert_{W^{2,4}}^2\right)\Vert \delta_\rho\Vert_V^2dt\nonumber\\
	&\qquad+C_*\Vert z\Vert_{H^2}^2\Vert y_\rho-y\Vert_{H^2}^2dt+C_*\Vert z\Vert_{2}^2\rho^{2\gamma}dt+2(R,\delta_\rho)d\mathcal{W}.
	\end{align}
	\\
	Regarding  the last term in the right hand side of \eqref{15_2}, we write 
	\begin{align*}
	\E\sup_{t\in [0, \tau_M\wedge \tau_M^\rho]}\vert \int_0^{t}(R,\delta_\rho)d\mathcal{W}(s)\vert
	&=\E\sup_{t\in [0, \tau_M\wedge \tau_M^\rho]}\vert \int_0^{t}\sum_{ \k\geq 1}( \dfrac{1}{\rho}(\sigma_\k(\cdot,y_\rho)-\sigma_\k(\cdot,y))- \nabla_y\sigma_\k(\cdot,y)z,\delta_\rho)d\beta_\k\vert.
	\end{align*}
	By	using  $\mathcal{H}_0$ ( see \eqref{Gateaux-derivative-noise}), we have  $\dfrac{1}{\rho}(\sigma_\k(\cdot,y_\rho)-\sigma_\k(\cdot,y))=\nabla_y\sigma_\k(\cdot,y)z_\rho+R_\k^\rho(\cdot, z_\rho),  \forall \k \geq 1.
	$
	
	Hence, 	for any $t\in [0, T]$ and $\epsilon>0$, the  Burkholder–Davis–Gundy  and Young inequalities ensure
	\begin{align*}
	&\E\sup_{s\in [0, t \wedge\tau_M\wedge \tau_M^\rho]}\vert \int_0^{s}(R,\delta_\rho)d\mathcal{W}\vert=	\E\sup_{s\in [0, t \wedge\tau_M\wedge \tau_M^\rho]}\vert \int_0^{s}\sum_{ \k\geq 1}( \nabla_y\sigma_\k(\cdot,y)\delta_\rho+R_\k^\rho(\cdot, z_\rho),\delta_\rho)d\beta_\k\vert\\
	&\leq C\E \bigl[\sum_{ \k\geq 1}\int_0^{t \wedge\tau_M\wedge \tau_M^\rho}\Vert \nabla_y\sigma_\k(\cdot,y)\delta_\rho+R_\k^\rho(\cdot, z_\rho)\Vert_2^2\Vert \delta_\rho\Vert_2^2 ds\bigr]^{1/2}\\
	&\leq 	\epsilon\E\sup_{s\in [0, t \wedge\tau_M\wedge \tau_M^\rho]}\Vert \delta_\rho(s)\Vert_{V}^2+C_\epsilon \E \int_0^{t \wedge\tau_M\wedge \tau_M^\rho}\Vert z+\delta_\rho\Vert_{2}^2\rho^{2\gamma}  ds+	C_\epsilon \E \int_0^{t \wedge\tau_M\wedge \tau_M^\rho}\Vert \delta_\rho\Vert_2^2 ds\\
&\leq	\epsilon\E\sup_{s\in [0,t \wedge \tau_M\wedge \tau_M^\rho]}\Vert \delta_\rho(s)\Vert_{V}^2+C_\epsilon \E \int_0^{t \wedge\tau_M\wedge \tau_M^\rho}\Vert z\Vert_{2}^2\rho^{2\gamma}  ds+	C_\epsilon \E \int_0^{t \wedge\tau_M\wedge \tau_M^\rho}\Vert \delta_\rho\Vert_2^2 ds,
	\end{align*}
	where  \eqref{Gateaux-derivative-noise}	and	 (\cite[Theorem 3]{Busuioc})	ensure the last inequality. 
	Taking the supremum of  
\eqref{rest-step1} over the time interval $[0, t\wedge\tau_M\wedge \tau_M^\rho]$, next applying the expectation,  we obtain
		\begin{align*}
\hspace*{-0.5cm}(1-\epsilon)&\E\sup_{s\in [0, t\wedge\tau_M\wedge \tau_M^\rho]}\Vert \delta_\rho(s)\Vert_{V}^2\leq 
	C_*\E\int_0^{t\wedge\tau_M\wedge \tau_M^\rho}\Vert z\Vert_{H^2}^2\Vert y_\rho-y\Vert_{H^2}^2ds+C_*\E\int_0^{t\wedge\tau_M\wedge \tau_M^\rho}\Vert z\Vert_{2}^2\rho^{2\gamma}ds\\
	&\qquad\qquad+C_\epsilon(M) \E \int_0^{t}\sup_{r\in [0, s \wedge\tau_M\wedge \tau_M^\rho]}
	\Vert \delta_\rho(r)\Vert_V^2 ds, \quad \forall t\in[0,T].
	\end{align*}
Now, we choose $\epsilon=\dfrac{1}{2}$ and apply Gronwall's inequality to obtain
		\begin{align*}
\E\sup_{s\in [0, \tau_M\wedge \tau_M^\rho]}\Vert \delta_\rho(s)\Vert_{V}^2
&\leq 
C_*e^{C(M)\tau_M}\big\{\E\int_0^{\tau_M\wedge \tau_M^\rho}\Vert z\Vert_{H^2}^2\Vert y_\rho-y\Vert_{H^2}^2ds+\rho^{2\gamma}\E\int_0^{\tau_M\wedge \tau_M^\rho}\Vert z\Vert_{2}^2ds\big\}\\
&:=A_1+A_2.
\end{align*}
The right hand side of the last inequality converges to $0$ as $\rho \to 0$. Indeed, it is clear that $\displaystyle\lim_{\rho\to 0}A_2=0$. On the other hand, let us introduce the following stopping time  
$$ \tau_N^z= \inf\{ t\in [0,T]: \int_0^t\Vert z\Vert_{H^2}^2ds \geq N\}\wedge \tau_M.$$
 By using  Corollary \ref{cor-rest-0}, we get	
$\displaystyle\lim_{N\to \infty}\displaystyle\lim_{\rho\to 0}\E\int_0^{\tau_M\wedge \tau_M^\rho\wedge\tau_N^z}\Vert z\Vert_{H^2}^2\Vert y_\rho-y\Vert_{H^2}^2ds=0.$
Note that   $(\tau_N^z)_N$ is non-decreasiong positive sequence of stopping time and converges to $\tau_M$ in probability, thanks to Theorem \ref{exis-z+estim}. Hence, monotone convergence theorem ensures 
$$\displaystyle\lim_{N\to \infty}\E\int_0^{\tau_M\wedge \tau_M^\rho\wedge\tau_N^z}\Vert z\Vert_{H^2}^2\Vert y_\rho-y\Vert_{H^2}^2ds=\E\int_0^{\tau_M\wedge \tau_M^\rho}\Vert z\Vert_{H^2}^2\Vert y_\rho-y\Vert_{H^2}^2ds. $$
Now,	a	standard	argument	allows	to	conclude
that $\displaystyle\lim_{\rho\to 0}A_1=0$. 
\end{proof}

\subsection{Variation of the cost functional}\label{Subec6-2}

\begin{prop}\label{vari-cost}
Let $J_M$ be given by \eqref{cost-exam} and  consider $U,\,y_0, \,\psi$, $U_\rho=U+\rho\psi$ verifying 
	the hypothesis of the Proposition 
	\ref{prop-gateux-diff}. Then  
	\begin{equation*}
	J_M(U_\rho,y_\rho)=J_M(U,y)+\rho\lambda\E\int_0^T \Vert U\Vert_{(H^1(D))^d}^{p-2}(U,\psi)_{(H^1(D))^d}dt¨+\rho\E\int_0^{\tau_M^U}(y-y_d,z)dt+o(\rho),
	\end{equation*}
	where $y_\rho,y$ are the solutions of \eqref{I}, corresponding to $(U_\rho,y_0)$ and $(U,y_0)$, respectively, $\tau_M^U$ is the stopping time correponding to the solution $y$ and $z$ is the solution of \eqref{Linearized} associated	to $\psi$.
\end{prop}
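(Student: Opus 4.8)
The plan is to treat the two contributions to $J_M$ separately: the control penalty $\frac{\lambda}{p}\E\int_0^T\Vert U\Vert_{(H^1(D))^d}^p\,dt$, whose variation is purely a matter of differentiating a smooth convex functional on a Hilbert space, and the tracking term $\frac12\E\int_0^{\tau_M^U}\Vert y-y_d\Vert_2^2\,dt$, whose variation is delicate because the integration horizon $\tau_M^U$ itself depends on the control. For the penalty, since $(H^1(D))^d$ is Hilbert the map $u\mapsto\frac1p\Vert u\Vert_{(H^1(D))^d}^p$ is $C^1$ with derivative $\Vert u\Vert^{p-2}(u,\cdot)$; I would write the difference quotient $\frac1\rho(\Vert U+\rho\psi\Vert^p-\Vert U\Vert^p)$, which converges pointwise in $(\omega,t)$ to $\Vert U\Vert^{p-2}(U,\psi)_{(H^1(D))^d}$ and is dominated, via the mean value theorem, by $C(\Vert U\Vert^{p-1}+\Vert\psi\Vert^{p-1})\Vert\psi\Vert$, an $L^1(\Omega_T)$ function thanks to $U,\psi\in X$ and H\"older. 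Dominated convergence then yields the term $\rho\lambda\E\int_0^T\Vert U\Vert_{(H^1(D))^d}^{p-2}(U,\psi)_{(H^1(D))^d}\,dt+o(\rho)$.

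For the tracking term the main obstacle is the mismatch between the horizons $\tau_M:=\tau_M^U$ and $\tau_M^\rho:=\tau_M^{U_\rho}$, together with the fact that the expansion $y_\rho=y+\rho z+\rho\delta_\rho$ of Proposition \ref{prop-gateux-diff} is only available on the common interval $[0,\underline\tau]$, $\underline\tau:=\tau_M\wedge\tau_M^\rho$. First I would reduce both $\E\int_0^{\tau_M^\rho}\Vert y_\rho-y_d\Vert_2^2\,dt$ and $\E\int_0^{\tau_M}\Vert y-y_d\Vert_2^2\,dt$ to integrals over $[0,\underline\tau]$. On the stopped intervals $\Vert y\Vert_{W^{2,4}},\Vert y_\rho\Vert_{W^{2,4}}\le M$, so by $W^{2,4}(D)\hookrightarrow L^\infty(D)$ the integrands are bounded by $C(M^2+\Vert y_d\Vert_2^2)$; since the symmetric-difference region is contained in $\{\tau_M\neq\tau_M^\rho\}$ and $y_d\in L^2(0,T;H)$, each discarded piece is bounded by $C(M)\,P(\tau_M\neq\tau_M^\rho)$, which is $o(\rho)$ by Corollary \ref{cor-delta0}.

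On $[0,\underline\tau]$ I would then insert $y_\rho=y+\rho z+\rho\delta_\rho$ and expand the square, so that
\[
\tfrac12\E\int_0^{\underline\tau}\!\big(\Vert y_\rho-y_d\Vert_2^2-\Vert y-y_d\Vert_2^2\big)dt=\rho\,\E\int_0^{\underline\tau}(y-y_d,z)\,dt+\rho\,\E\int_0^{\underline\tau}(y-y_d,\delta_\rho)\,dt+\tfrac{\rho^2}{2}\E\int_0^{\underline\tau}\Vert z+\delta_\rho\Vert_2^2\,dt.
\]
The $\delta_\rho$-term is $o(\rho)$: by Cauchy--Schwarz it is bounded by $\rho\,(\E\int_0^{\underline\tau}\Vert y-y_d\Vert_2^2\,dt)^{1/2}(T\,\E\sup_{[0,\underline\tau]}\Vert\delta_\rho\Vert_V^2)^{1/2}$, and the last factor tends to $0$ by Proposition \ref{prop-gateux-diff}. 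The quadratic term is $O(\rho^2)$ using the uniform bounds on $z$ from Theorem \ref{exis-z+estim} and on $\delta_\rho$. Finally I would replace $\underline\tau$ by $\tau_M$ in the leading term; the error $\rho\,\E\int_{\underline\tau}^{\tau_M}(y-y_d,z)\,dt$ is, by Cauchy--Schwarz, at most $\rho\,(T\,P(\tau_M\neq\tau_M^\rho))^{1/2}(\E\int_0^{\tau_M}|(y-y_d,z)|^2\,dt)^{1/2}$, where the second factor is finite (bounded moments of $z$ via Theorem \ref{exis-z+estim}, $y_d\in L^2(0,T;H)$, and $\Vert y\Vert_{W^{2,4}}\le M$) while the first tends to $0$, so this is again $o(\rho)$. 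Collecting the penalty and tracking contributions yields the claimed expansion; the only genuinely delicate point is the control of the horizon mismatch, which rests entirely on the $o(\rho)$ estimate for $P(\tau_M\neq\tau_M^\rho)$ in Corollary \ref{cor-delta0} together with the $W^{2,4}$-bound built into the definition of $\tau_M$.
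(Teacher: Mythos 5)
Your proposal is correct and follows essentially the same route as the paper: split $J_M$ into the penalty and tracking parts, handle the penalty by standard differentiability of the $p$-th power of the Hilbert norm, and for the tracking term decompose the error into the horizon-mismatch pieces (controlled by $P(\tau_M\neq\tau_M^\rho)=o(\rho)$ from Corollary \ref{cor-delta0} together with the $W^{2,4}$-bound built into $\tau_M$), the $\delta_\rho$ cross term (controlled by Proposition \ref{prop-gateux-diff}), the $O(\rho^2)$ quadratic term, and the replacement of $\tau_M\wedge\tau_M^\rho$ by $\tau_M$ in the leading term. These are exactly the terms $I_1$--$I_5$ in the paper's proof, estimated with the same tools.
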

\begin{proof}
Let us split the cost functional into two parts. Namely,
\begin{align*}
\begin{array}{llll}
S_1:\mathcal{U}_{ad}^p&\to \mathbb{R};\hspace{5.6cm}
 S_2:\mathcal{U}_{ad}^p&\to \mathbb{R}\\
\qquad \quad U&\to S_1(U)=\dfrac{1}{2}\E\int_0^{\tau^U_M}\Vert y(U)-y_d\Vert_2^2dt; \qquad \qquad U&\to S_2(U) = \dfrac{\lambda}{p}\E\int_0^T\Vert U\Vert_{(H^1(D))^d}^pdt.
\end{array}
\end{align*}
First, let us derive the G\^ateaux derivative of $S_1$ at $U\in L^p((\Omega_T,\mathcal{P}_T);(H^1(D))^d)$ in direction of $\psi\in L^p((\Omega_T,\mathcal{P}_T);(H^1(D))^d)$.  Let $\rho>0$ and $z$ be the solution of \eqref{Linearized}, we write
\begin{align*}
	\big\vert \dfrac{1}{2\rho}&[S_1(U+\rho\psi)-S_1(U)]-\E\int_0^{\tau_M^U}(y-y_d,z)dt\big\vert 
	\leq I_1(\rho)+I_2(\rho)+I_3(\rho)+I_4(\rho)+I_5(\rho),\\
\text{	where	}	I_1(\rho)&=\dfrac{1}{2\rho}\E\int_0^{\tau_M^U\wedge\tau^{U+\rho\psi}_M}\Vert y(U+\rho\psi)-y(U)\Vert_2^2dt;\\
	I_2(\rho)&=\vert \E \int_0^{\tau_M^U\wedge\tau^{U+\rho\psi}_M}(y(U)-y_d,\dfrac{1}{\rho}[y(U+\rho\psi)-y(U)]-z(\psi))dt\vert;\\
		I_3(\rho)&=\vert \E\int_{\tau_M^U\wedge\tau^{U+\rho\psi}_M}^{\tau_M^U}( y(U)-y_d,z(\psi))dt\vert;\quad
	I_4(\rho)=\vert \dfrac{1}{2\rho}\E\int_{\tau_M^U\wedge\tau^{U+\rho\psi}_M}^{\tau_M^U}\Vert y(U)-y_d\Vert_2^2dt\vert;\\
	I_5(\rho)&=\vert \dfrac{1}{2\rho}\E\int_{\tau_M^U\wedge\tau^{U+\rho\psi}_M}^{\tau_M^{U+\rho\psi}}\Vert y(U+\rho \psi)-y_d\Vert_2^2dt\vert.
\end{align*}
Thanks to  Corollary \ref{cor-rest-0}, we get $\displaystyle\lim_{\rho \to 0}I_1(\rho)=0$. More precisely we have
\begin{align*}
I_1(\rho) \leq \dfrac{C}{2\rho}\E\sup_{s\in [0, \tau_M^U\wedge\tau^{U+\rho\psi}_M]}\Vert y_\rho(s)-y(s)\Vert_V^2\leq C(M,L,T)\rho\E\int_0^{\tau_M^U\wedge\tau^{U+\rho\psi}_M}\Vert \psi(s)\Vert_{2}^2ds \overset{\rho\to 0}{\to} 0.
\end{align*}
According to   Proposition \ref{prop-gateux-diff}, we have $\dfrac{1}{\rho}[y(U+\rho\psi)-y(U)]-z(\psi)=\delta_\rho.$ Therefore, using Cauchy-Schwarz inequality and \eqref{representationz}, we deduce
\begin{align*}
	I_2(\rho)&=\vert \E \int_0^{\tau_M^U\wedge\tau^{U+\rho\psi}_M}(y(U)-y_d,\delta_\rho)dt\vert\\
	&\leq 2\sqrt{T} \Big(\E\int_0^{\tau_M^U}(\Vert y(U)\Vert_2^2 +\Vert y_d\Vert_{2}^2 )ds\Big)^{1/2} \Big(\E\sup_{s\in [0, \tau_M^U\wedge \tau^{U+\rho\psi}_M]}\Vert \delta_\rho(s)\Vert_{2}^2\Big)^{1/2}\overset{\rho\to 0}{\to} 0.
\end{align*}
On the other hand, the Cauchy-Schwarz inequality,
the Fubini's theorem and Lemma \ref{Lemma-equality-stopping} yield
\begin{align*}
	I_3(\rho)&=\vert \E\int_{\tau_M^U\wedge\tau^{U+\rho\psi}_M}^{\tau_M^U}( y(U)-y_d,z(\psi))dt\vert\\
	& \leq  \sqrt{T}\Big(\int_0^{T}P(\tau_M^U\wedge\tau^{U+\rho\psi}_M <t\leq \tau_M^U)[M^2+\Vert y_d\Vert_{2}^2] ds\Big)^{1/2} \Big(\E\sup_{s\in [0, \tau_M^U]}\Vert z(s)\Vert_{2}^2\Big)^{1/2}\overset{\rho\to 0}{\to} 0.
\end{align*}
  Finally,  $I_4$ and $I_5$ can be estimated as follows
\begin{align*}
	I_4(\rho)&=\vert \dfrac{1}{2\rho}\E\int_{\tau_M^U\wedge\tau^{U+\rho\psi}_M}^{\tau_M^U}\Vert y(U)-y_d\Vert_2^2dt\vert \leq \int_0^{T}\dfrac{P(\tau_M^U\wedge\tau^{U+\rho\psi}_M < \tau_M^U)}{\rho}[M^2+\Vert y_d\Vert_{2}^2] ds;\\
	I_5(\rho)&=\vert \dfrac{1}{2\rho} \E\int_{\tau_M^U\wedge\tau^{U+\rho\psi}_M}^{\tau_M^{U+\rho\psi}}\Vert y(U+\rho \psi)-y_d\Vert_2^2dt\vert \leq \int_0^{T}\dfrac{P(\tau_M^U\wedge\tau^{U+\rho\psi}_M < \tau_M^{U+\rho\psi})}{\rho}[M^2+\Vert y_d\Vert_{2}^2] ds.
	\end{align*}
Since $\displaystyle\lim_{\rho \to 0} (U+\rho\psi)=U$	in	$X$,
	Corollary \ref{cor-delta0} ensures  that 
	 $\displaystyle\lim_{\rho \to 0}I_4(\rho)+I_5(\rho)=0$. Consequently, 
	$ \displaystyle\lim_{\rho \to 0}\vert \dfrac{1}{2\rho}[S_1(U+\rho\psi)-S_1(U)]-\E\int_0^{\tau_M^U}(y-y_d,z)dt\vert=0,$
	and the G\^ateaux derivative of $S_1$  at $U\in L^p((\Omega_T,\mathcal{P}_T);(H^1(D))^d)$ in direction of $\psi\in L^p((\Omega_T,\mathcal{P}_T);(H^1(D))^d)$ is given by	$ \displaystyle	d^GS_1(U)[\psi]=
	\E\int_0^{\tau_M^U}(y-y_d,z)dt. $
	Notice that $S_2$ is given by the $p^{th}$-power of a norm in 
	$L^p(\Omega\times(0,T);(H^1(D))^d)$. Therefore, 
	by using 	standard arguments, we can show that
	the G\^ateaux derivative of $S_2$  at $U\in L^p((\Omega_T,\mathcal{P}_T);(H^1(D))^d)$ in direction of $\psi\in L^p((\Omega_T,\mathcal{P}_T);(H^1(D))^d)$ corresponds to
	$ \displaystyle	d^GS_2(U)[\psi]=\lambda\E\int_0^T \Vert U\Vert_{(H^1(D))^d}^{p-2}(U,\psi)_{(H^1(D))^d}dt. $
	It is clear that $d^GS_1(U)$ and $d^GS_2(U)$ are linear and continuous (bounded) functionals,	which	ends	the	proof.
	\end{proof}

\section{Stochastic backward adjoint equations }\label{Sec7}
This section is devoted to the formulation and  study of the so-called adjoint equation, which is crucial for writing the first order optimality conditions for the system \eqref{I}.
It is worth  to mention that the adjoint equation,  as well as the optimality conditions, can be deduced by formally applying the Lagrange multiplier method, as explained in \cite[Sect. 2.10]{Troltzsch}.
The equations, once formally deduced, should  be rigorously justified.  For this task, a suitable  integration by parts should be performed. Since the 3D and 2D analysis are different, we will distinguish the two cases, by studying each one separately.\\

Let	$g$ be 	a predictable function satisfying $g\in L^p(\Omega;L^2(0,T;H))$. In particular, we will set later	$g=y-y_d$,	where	$y$	is	the	solution	of	\eqref{I} and $y_d\in L^2(0,T;H)$.\\
First,	let us make some observations about the stochastic part of the problem.
\begin{remark}\label{Rmq-adjoint*noise}
	Let us consider the following mapping
	\begin{align*}
	\nabla_yG: (L^2(D))^d&\to L_2(\mathbb{H}; (L^2(D))^d),\\
	u &\mapsto  \nabla_yG(\cdot,y)u:
	\mathbb{H}\to (L^2(D))^d, \quad 
	\nabla_yG(\cdot,y)ue_\k=\nabla_y\sigma_\k(\cdot,y)u, \quad\forall \k \geq 1.
	\end{align*} 
	\begin{itemize}
		\item 	We recall that $(L^2(D))^d$ and $L_2(\mathbb{H}; (L^2(D))^d)$ are  Hilbert spaces. Since $\nabla_yG$ is linear and continuous (bouned) operator, there exists a linear and bounded operator $G^*:L_2(\mathbb{H}; (L^2(D))^d) \to (L^2(D))^d$ satisfying 
		\begin{align}\label{adjoint-noise}
		(\nabla_yG u,\mathcalb{q})_{L_2(\mathbb{H}; (L^2(D))^d)}=(u,G^*\mathcalb{q}); \quad \forall \mathcalb{q}\in L_2(\mathbb{H}; (L^2(D))^d), \forall u\in (L^2(D))^d.
		\end{align}
		\item 
		It follows from \eqref{adjoint-noise} that $G^*\mathcalb{q}$ is predictable too   for every  $L_2(\mathbb{H}; (L^2(D))^d)$-valued predictable process $\mathcalb{q}$. Moreover,	for	any	$  \mathcalb{q}\in L_2(\mathbb{H}; (L^2(D))^d)$,	$\;u\in (L^2(D))^d$,	we	have
		\begin{align*}
		(\nabla_yG u,\mathcalb{q})_{L_2(\mathbb{H}; (L^2(D))^d)}=\sum_{ \k\geq 1}(\nabla_y\sigma_\k(\cdot,y) u,\mathcalb{q}e_\k)
		&= \sum_{ \k\geq 1}(u,(\nabla_y\sigma_\k(\cdot,y))^T\mathcalb{q}e_\k)=(u,G^*\mathcalb{q}),
		\end{align*}
		where $B^T$ denotes the transpose of the matrix $B$.
	\end{itemize}
\end{remark}


	\subsection{Existence and uniqueness of solution 	to the  backward adjoint equation in 2D}
			  Here we  aim to show the well-posedness	 of solution  $(	\mathcalb{p},\mathcalb{q})$ to the following 2D	  adjoint system  
	\begin{align}\label{adjoint}
	\begin{cases}
	-d(v(\mathcalb{p}))+\mathbb{I}_{[0,\tau_M[}\big\{-\nu \Delta \mathcalb{p}
	-\text{curl }v(y)\times \mathcalb{p}+\text{curl }v(y\times 
	\mathcalb{p})-(\alpha_1+\alpha_2)\text{div}\big[A(y)A(\mathcalb{p})+A(\mathcalb{p})A(y)\big]&\\[1mm]
	\quad-\beta\text{div}\big[\vert A(y)\vert^2A(\mathcalb{p})\big]-2\beta\text{div}\big[(A(y): A(\mathcalb{p}))A(y)\big]\big\}dt&\\
	\qquad=\mathbb{I}_{[0,\tau_M[}\big\{g-\nabla \pi -\displaystyle\sum_{ \k\geq 1}(\nabla_y\sigma_\k(\cdot,y))^T\mathcalb{q}e_\k\big\}dt+\displaystyle\sum_{ \k\geq 1}v(\mathcalb{q}e_\k)d\beta_\k&\hspace*{-4.15cm}\text{in } D\times (0,T)\times\Omega,\\[1mm]
	\text{div}(\mathcalb{p})=0 \quad &\hspace*{-4.15cm}\text{in } D\times (0,T)\times\Omega,\\
	\mathcalb{p}\cdot \eta=0, \quad [\eta \cdot \mathbb{D}(\mathcalb{p})]\cdot \tau=0  \quad &\hspace*{-4.15cm}\text{on } \partial D\times (0,T)\times\Omega,\\
	\mathcalb{p}(T)=0 \quad &\hspace*{-4.15cm}\text{in } D\times\Omega,
	\end{cases}
	\end{align}
	where $y$ is the solution of \eqref{I}, in the sense of Definition \ref{Def-strong-sol-main} and $\tau_M$ is given by \eqref{stopping-time}. \\

Let us introduce the notion of solution to the system \eqref{adjoint}, the so-called adjoint state.
\begin{definition}\label{Def-adjoint}
	A pair $(\mathcalb{p},\mathcalb{q})$ of  stochastic processes is  a solution to \eqref{adjoint} if it satisfies the following properties	
		\begin{enumerate}
				\item[i)] $\mathcalb{p}$ and $\mathcalb{q}$ are predictable  processes with values in $W$ and $L_2(\mathbb{H}; W)$, respectively.
						\item[ii)]  $\mathcalb{p}\in  L^\infty(0,T;L^2(\Omega;W))$,  $\mathcalb{q}\in L^{2}(\Omega_T;L_2(\mathbb{H}; W))$ and $v(\mathcalb{p})\in \mathcal{C}_w([0,T];L^2(\Omega;(L^2(D))^2))$. 
			\item[iii)]  For any $t\in [0,T]$, P-a.s.  in $\Omega$ and for any $\phi \in W$, the following equation holds
	\begin{align}
	\label{22_1}
	(v(\mathcalb{p}(t)),\phi)&+2\nu\int_t^T\mathbb{I}_{[0,\tau_M[}(s)(\mathbb{D} \mathcalb{p},
	\mathbb{D}\phi)ds+2\beta \int_t^T\mathbb{I}_{[0,\tau_M[}(s)\big(\left(A(\mathcalb{p}):	A(y)\right)A(y),\nabla\phi\big) ds\notag\\[-0.15cm]&+\int_t^T\mathbb{I}_{[0,\tau_M[}(s)\Big\{-b(\phi,\mathcalb{p},v(y))+b(\mathcalb{p},\phi,v(y))
	 +b(\mathcalb{p},y,v(\phi))
	 -b(y,\mathcalb{p},v(\phi))\Big\}ds\notag\\[-0.15cm]
	 &+\int_t^T\mathbb{I}_{[0,\tau_M[}(s)\Big\{(\alpha_1+\alpha_2)\big(A(y)A(\mathcalb{p})+A(\mathcalb{p})A(y),\nabla \phi\big)  +\beta\big( |A(y)|^2A(\mathcalb{p}), \nabla\phi\big)\Big \} ds\notag\\[-0.15cm]
	 	 &= \int_t^T\mathbb{I}_{[0,\tau_M[}(s)(g-G^*\mathcalb{q},\phi)ds+\sum_{ \k\geq 1}\int_t^T(v(\mathcalb{q}e_\k),\phi)d\beta_\k(s).
	\end{align}
\end{enumerate} 
\end{definition}
\begin{theorem}\label{exis-adjoint+estim} There exists a unique pair $(\mathcalb{p},\mathcalb{q})$ in the sense of Definition \ref{Def-adjoint},
	satisfying 
			\begin{align}\label{adjoint-estimate}
	\sup_{s\in [0, T]}\E\Vert \mathcalb{p}(s)\Vert_{W}^{2} \leq 
	C(\alpha_1,\alpha_2,\beta,\nu,T,M)\text{	and	}
	\E\int_{0}^T\Vert \mathcalb{q}\Vert_{L_2(\mathbb{H}; W)}^2ds\leq C(M),
	\end{align}
	where  $ C(M)>0$ and $ C(\alpha_1,\alpha_2,\beta,\nu,T,M)>0$. Moreover, for each fixed $M\in \mathbb{N}$ we have
	\begin{align*}
		\E\sup_{s\in [\tau_M,T]}\Vert \mathcalb{p}(s)\Vert_{W}^{2}=0 \text{  and } \;\E\int_{\tau_M}^T\Vert \mathcalb{q}\Vert_{L_2(\mathbb{H}; W)}^2ds=0.
	\end{align*}
\end{theorem}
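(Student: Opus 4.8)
The plan is to construct the adjoint state by a Faedo--Galerkin scheme based on the same special basis $\{h_i\}$ employed in Section \ref{Sec5}, which diagonalises the $W$--$V$ pairing \eqref{basis1}. Projecting \eqref{adjoint} onto $W_n=\mathrm{span}\{h_1,\dots,h_n\}$ and writing $\mathcalb{p}_n=\sum_i c_i(t)h_i$, the weak formulation \eqref{22_1} becomes a finite-dimensional \emph{linear} backward stochastic differential equation with terminal condition $\mathcalb{p}_n(T)=0$, whose coefficients are affine in $(\mathcalb{p}_n,\mathcalb{q}_n)$ and bounded on $[0,T]$ (they vanish after $\tau_M$, and $\|y\|_{W^{2,4}}\le M$ on $[0,\tau_M]$). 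By the classical theory of linear BSDEs (Pardoux--Peng), this system admits a unique adapted solution pair $(\mathcalb{p}_n,\mathcalb{q}_n)$, the corrector $\mathcalb{q}_n$ arising from the martingale representation inside $W_n$.

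Next I would derive the uniform a priori bounds. Applying It\^o's formula to $\|\mathcalb{p}_n\|_V^2$ and integrating from $t$ to $T$, the backward nature of the equation makes the quadratic-variation term enter the energy identity with a \emph{favourable} sign, contributing $+\E\int_t^T\|\mathcalb{q}_n\|^2$ to the same side as $\E\|\mathcalb{p}_n(t)\|_V^2$; this is precisely what controls $\mathcalb{q}_n$. All drift contributions are linear in $\mathcalb{p}_n$ with coefficients controlled by $M$ through the identities of Lemma \ref{tecnical-lemma} and the embeddings $H^2\hookrightarrow W^{1,4}$, $W^{2,4}\hookrightarrow W^{1,\infty}$, exactly as in \eqref{12_1}--\eqref{12_3}, so a backward Gr\"onwall argument yields the $V$-estimate. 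The $W$-estimate is obtained analogously by testing against the basis, multiplying by $\mu_i$ and invoking the Stokes regularity \cite[Theorem 3]{Busuioc} for the auxiliary fields $\tilde{\sigma}^n_\k$, as in Subsection \ref{subsectzn-V}. The critical term of the form $\|\mathcalb{p}_n\|_\infty\|y\|_{H^3}\|\mathcalb{p}_n\|_W$ is absorbed via the interpolation Lemma \ref{interpolation-estimate-lem} together with the margin $p>2(d+1)$ (choosing $\epsilon=(p-2(d+1))/2$), giving the uniform bounds \eqref{adjoint-estimate}.

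With these estimates I would extract subsequences converging weakly-$\ast$ in $L^\infty(0,T;L^2(\Omega;W))$ for $\mathcalb{p}_n$ and weakly in $L^2(\Omega_T;L_2(\mathbb{H};W))$ for $\mathcalb{q}_n$, and pass to the limit in the projected formulation; linearity of every term makes the identification routine once the weak convergence of the stochastic integral is justified as in \eqref{stochastic-cv-zn} (using linearity and continuity of the It\^o map, \cite[Prop. 21.27]{Zeidler}), with $\mathcalb{q}$ identified as the weak limit of $\mathcalb{q}_n$. The weak continuity $v(\mathcalb{p})\in\mathcal{C}_w([0,T];L^2(\Omega;(L^2(D))^2))$ follows as in the second step of the proof of Theorem \ref{exis-z+estim}. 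Uniqueness is proved by testing the equation for the difference of two solutions against itself: the problem being linear, the energy identity plus the two-dimensional Sobolev/Ladyzhenskaya embeddings — which keep each coefficient in $y$ integrable and let the linear terms be absorbed into a backward Gr\"onwall inequality — force the difference to vanish; this is exactly the point where $d=2$ is essential and where the 3D case only yields existence.

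Finally, the localisation statement is the cleanest part. Applying It\^o to $\|v(\mathcalb{p})\|_2^2$ between the stopping times $\tau_M$ and $T$, every drift term carries the factor $\mathbb{I}_{[0,\tau_M[}(s)$, which vanishes for $s\in[\tau_M,T]$; since $\mathcalb{p}(T)=0$ and the backward quadratic-variation term enters with the same positive sign as $\E\|v(\mathcalb{p}(\tau_M))\|_2^2$, taking expectations gives
\[
\E\|v(\mathcalb{p}(\tau_M))\|_2^2+\E\int_{\tau_M}^T\sum_{\k\ge 1}\|v(\mathcalb{q}e_\k)\|_2^2\,ds=0,
\]
whence $\mathcalb{q}=0$ on $[\tau_M,T]$ and $v(\mathcalb{p}(\tau_M))=0$. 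Reinserting this into the integral form of \eqref{adjoint} on $[\tau_M,T]$ gives $v(\mathcalb{p}(t))=0$, hence $\mathcalb{p}(t)=0$ (since $v$ is invertible on $W$), for every $t\in[\tau_M,T]$, which is the asserted vanishing. The main obstacle I anticipate is the uniform $W$-estimate: absorbing the critical term $\|\mathcalb{p}_n\|_\infty\|y\|_{H^3}\|\mathcalb{p}_n\|_W$ without the special weights of \cite{Chem-Cip-2} hinges on the sharp interpolation of Lemma \ref{interpolation-estimate-lem} and the condition $p>2(d+1)$, together with the consistent identification of the corrector $\mathcalb{q}$ in the weak limit.
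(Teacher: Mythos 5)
Your overall architecture (Galerkin projection onto $W_n$ with the basis \eqref{basis1}, solvability of the finite-dimensional linear BSDE, backward It\^o energy identities in $V$ and then in $W$ with the quadratic-variation term controlling $\mathcalb{q}_n$, weak passage to the limit, and the localisation on $[\tau_M,T]$ read off from the Gr\"onwall bound) matches the paper's proof. However, there is a genuine gap in your $W$-estimate: you import the wrong mechanism. The critical term $\Vert z_n\Vert_\infty\Vert y\Vert_{H^3}\Vert z_n\Vert_W$ absorbed by Lemma \ref{interpolation-estimate-lem} and the margin $p>2(d+1)$ belongs to the \emph{forward} linearized equation, where it comes from $(z_n\cdot\nabla)v(y)$ tested against $\mathbb{P}v(z_n)$ and where the absorption is possible because the forward $V$-step supplies $\E\sup_s\Vert z_n\Vert_V^{p}$ for all $p$. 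In the adjoint equation the dangerous contribution is instead $b(\mathcalb{p}_n,y,v(\phi))-b(y,\mathcalb{p}_n,v(\phi))$ with $\phi=\mathbb{P}v(\mathcalb{p}_n)$, which involves three derivatives of $\mathcalb{p}_n$ and cannot be bounded by $\Vert \mathcalb{p}_n\Vert_W^2$ by interpolation at all. The paper's proof hinges on the two-dimensional identities of Lemma \ref{IPP1} and Lemma \ref{IPP2D-2}, which rewrite this combination as $(\operatorname{curl}v(y\times\mathcalb{p}_n),\phi)$ and, together with \cite[Lemma 5]{Bus-Ift-2}, yield the bound $CM(1+\alpha_1)\Vert \mathcalb{p}_n\Vert_W^2$ with no interpolation and no use of $p>2(d+1)$; this is exactly the step that fails in 3D because of the boundary terms \eqref{boundary-3D} (see Remark \ref{22_3}), and it is the true reason the $W$-estimate, hence uniqueness, is two-dimensional.

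Even granting your reading, the proposed absorption would not close: Lemma \ref{interpolation-estimate-lem} requires control of $\Vert \mathcalb{p}_n\Vert_{L^{2q(d+1+\epsilon)}(\Omega\times(0,T);V)}$, i.e.\ high moments of the adjoint state, whereas the backward $V$-step only delivers $\sup_{t}\E\Vert \mathcalb{p}_n(t)\Vert_V^2\le C(M)$ (not $\E\sup_t$, and not higher powers), since the martingale term is killed by taking expectations rather than estimated by Burkholder--Davis--Gundy. Relatedly, your uniqueness paragraph attributes the restriction to $d=2$ to Ladyzhenskaya-type embeddings; the actual obstruction is that in 3D the solution is only $V$-valued while the test space in \eqref{adjoint3D} is $W$, so one cannot test the difference of two solutions against itself — and the reason the 2D solution is $W$-valued is again the curl identity above. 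You should replace the interpolation step by the estimate of $I_2^2+I_2^3$ via Lemma \ref{IPP2D-2} as in Subsubsection \ref{apriori-estimate-pn2D}; the rest of your argument then goes through.
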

\subsubsection{Finite dimensional approximation}
\label{22_7}Let us consider the orthonormal basis $\{h_i\}$ in $V$,	given	by	\eqref{basis1} and the finite dimensional spaces $W_n$,  already introduced in Section \ref{S1_zn}.
Setting  $ p_n(t)=\sum_{i=1}^n d_i(t)h_i$, \,
$n\in\mathbb{N},\; d_i(t)\in \mathbb{R}$, and 
$q_n(t)\in L_2(\mathbb{H}, W_n),$
the  approximation for \eqref{adjoint} reads 
\begin{align}\label{approximation-adjoint}
&(v(p_n(t)),\phi)+\int_t^T\hspace*{-0.25cm}\mathbb{I}_{[0,\tau_M[}(s)\Big\{2\nu(\mathbb{D} p_n,
\mathbb{D}\phi)ds-b(\phi,p_n,v(y))\nonumber\\[-0.15cm]
&+b(p_n,\phi,v(y))+b(p_n,y,v(\phi))
-b(y,p_n,v(\phi))\Big\}ds\nonumber\\[-0.15cm]
&+\int_t^T\hspace*{-0.25cm}\mathbb{I}_{[0,\tau_M[}(s)\Big\{(\alpha_1+\alpha_2)\big(A(y)A(p_n)+A(p_n)A(y),\nabla \phi\big)  +\beta\big( |A(y)|^2A(p_n), \nabla\phi\big)\Big \} ds\nonumber\\[-0.15cm]
&+2\beta \int_t^T\hspace*{-0.25cm}\mathbb{I}_{[0,\tau_M[}(s)\big(\left(A(p_n):	A(y)\right)A(y),\nabla\phi\big) ds= \int_t^T\hspace*{-0.25cm}\mathbb{I}_{[0,\tau_M[}(s)(g-G^*q_n,\phi)ds\nonumber\\[-0.15cm]
&+\sum_{ \k\geq 1}\int_t^T(v(q_ne_\k),\phi)d\beta_\k(s), \quad \text{ for any }\phi \in W_n.
\end{align}
From   \cite[Prop. 6.20]{backward-SPDE},  there exists a  unique pair of predictable processes $(p_n,q_n)$ such that  \eqref{approximation-adjoint} holds  for any $t\in [0,T]$ and 
\begin{align}\label{regularity-approx-adjoint}
p_n\in L^r_{\mathcal{P}_T}(\Omega;\mathcal{C}([0,T],W_n)),\quad q_n \in L^r(\Omega;L^2(0,T;L_2(\mathbb{H},W_n)), \text{	for		all	}	1\leq r\leq 2(d+1).
\end{align}
\subsubsection{Uniform estimates}\label{apriori-estimate-pn2D}
\subsubsection*{Step 1.  Estimate in the space $V$}\label{subsub1} 
Let us consider  $0\leq t\leq T$. Setting $\phi=h_i$ in \eqref{approximation-adjoint}, we obtain
\begin{align*}
&(v(p_n(t)),h_i)+\int_t^T\hspace*{-0.25cm}\mathbb{I}_{[0,\tau_M[}(s)\Big\{(\alpha_1+\alpha_2)\big(A(y)A(p_n)+A(p_n)A(y),\nabla h_i\big)  +\beta\big( |A(y)|^2A(p_n), \nabla h_i\big)\Big \} ds\\
&+\int_t^T\hspace*{-0.25cm}\mathbb{I}_{[0,\tau_M[}(s)\Big\{2\nu(\mathbb{D} p_n,
\mathbb{D}h_i)-b(h_i,p_n,v(y))+b(p_n,h_i,v(y))+b(p_n,y,v(h_i))
-b(y,p_n,v(h_i))\Big\}ds\nonumber\\
&+2\beta \int_t^T\hspace*{-0.25cm}\mathbb{I}_{[0,\tau_M[}(s)\big(\left(A(p_n):	A(y)\right)A(y),\nabla h_i\big) ds\nonumber\\
&= \int_t^T\hspace*{-0.25cm}\mathbb{I}_{[0,\tau_M[}(s)(g-G^*q_n,h_i)ds+\sum_{ \k\geq 1}\int_t^T(v(q_ne_\k),h_i)d\beta_\k(s).\nonumber
\end{align*}
Applying the It\^o's formula, integrating over the time interval
$[t,T]$,
and summing from $i=1$ to $n$, we deduce
\begin{align}\label{2cases-eqn}
&\Vert p_n(t)\Vert_V^2+\int_t^{T}2\mathbb{I}_{[0,\tau_M[}(s)\Big\{(\alpha_1+\alpha_2)\big(A(y)A(p_n)+A(p_n)A(y),\nabla p_n\big)  +\beta\big( |A(y)|^2A(p_n), \nabla p_n\big)\Big \} ds\nonumber\\
&+\int_t^T\hspace*{-0.3cm}2\mathbb{I}_{[0,\tau_M[}(s)\Big\{2\nu(\mathbb{D} p_n,
\mathbb{D}p_n)+b(p_n,y,v(p_n))
-b(y,p_n,v(p_n))\Big\}ds\nonumber\\
&+4\beta \int_t^T\mathbb{I}_{[0,\tau_M[}(s)\big(\left(A(p_n):	A(y)\right)A(y),\nabla p_n\big) ds= \int_t^T2\mathbb{I}_{[0,\tau_M[}(s)(g-G^*q_n,p_n)ds\\
&+\sum_{ \k\geq 1}\int_t^T2(q_ne_k,p_n)_Vd\beta_\k(s)-\sum_{i=1}^n\sum_{ \k\geq 1}\int_t^T (q_ne_k,h_i)_V^2 ds.\nonumber
\end{align}
We notice that
$\displaystyle\sum_{i=1}^n\sum_{ \k\geq 1}\int_t^T(q_ne_\k,h_i)_V^2ds
=\sum_{ \k\geq 1}\int_t^T\Vert q_ne_\k\Vert _V^2ds= \int_t^T\Vert q_n\Vert_{L_2(\mathbb{H},V)}^2ds.$

On the other hand, taking into account \eqref{regularity-approx-adjoint}, we have
\begin{align}\label{martingale-property-Back}
 &\E\sum_{ \k\geq 1}\int_0^T(q_ne_k,p_n)_V^2ds <\infty
\text{	thus	}	  \E\sum_{ \k\geq 1}\int_t^T2(q_ne_k,p_n)_Vd\beta_\k(s)=0.
\end{align}


Thanks to Lemma \ref{tecnical-lemma} (see Section \ref{Technical-results}), we derive 
\begin{align*}
&\int_t^T\hspace*{-0.3cm}2\mathbb{I}_{[0,\tau_M[}(s)\Big\{2\nu(\mathbb{D} p_n,
\mathbb{D}p_n)+b(p_n,y,v(p_n))
-b(y,p_n,v(p_n))\Big\}ds\nonumber\\
&\leq  4\nu \int_t^T\hspace*{-0.3cm}\mathbb{I}_{[0,\tau_M[}(s)\Vert \mathbb{D} p_n\Vert_2^2ds+C\int_t^{T}\mathbb{I}_{[0,\tau_M[}(s)\Vert y\Vert_{W^{2,4}}\Vert p_n\Vert_{V}^2ds\\
&\leq    4\nu \int_t^T\hspace*{-0.3cm}\mathbb{I}_{[0,\tau_M[}(s)\Vert \mathbb{D} p_n\Vert_2^2ds+CM\int_t^{T}\mathbb{I}_{[0,\tau_M[}(s)\Vert p_n\Vert_{V}^2ds.
\end{align*}
Now, using similar arguments to those used in \cite[Section 4.2]{yas-fer}, we deduce the following estimates
\begin{align*}
&\hspace*{-1cm}\left\vert\int_t^{T}2\mathbb{I}_{[0,\tau_M[}(s)\Big\{(\alpha_1+\alpha_2)\big(A(y)A(p_n)+A(p_n)A(y),\nabla p_n\big) \Big \} ds\right\vert \leq  C_2  \int_t^{T}\mathbb{I}_{[0,\tau_M[}(s)\Vert p_n\Vert_{H^1}^2\Vert  y\Vert_{W^{1,\infty}}ds\\
\text{		 and 	} &\vert 4\beta\int_t^T\mathbb{I}_{[0,\tau_M[}(s)\big(\left(A(p_n):	A(y)\right)A(y),\nabla p_n\big) ds+ \int_t^T\mathbb{I}_{[0,\tau_M[}(s) \beta\big( |A(y)|^2A(p_n), \nabla p_n\big)ds\vert \\
&\leq C_3  \int_t^T\mathbb{I}_{[0,\tau_M[}(s)\Vert p_n\Vert_{H^1}^2\Vert  y\Vert_{W^{1,\infty}}^2ds\leq C_3   \int_t^T\mathbb{I}_{[0,\tau_M[}(s)\Vert p_n\Vert_{H^1}^2\Vert  y\Vert_{W^{2,4}}^2ds
\end{align*}
Gathering the previous estimates, we show that there exists $C>0$, independent of $n$,  such that
\begin{align}\label{V-estim-adjoint1}
\Vert p_n(t)\Vert_V^2&+\int_t^T\Vert q_n\Vert_{L_2(\mathbb{H};V)}^2ds+4\nu \int_t^T\hspace*{-0.3cm}\mathbb{I}_{[0,\tau_M[}(s)\Vert \mathbb{D} p_n\Vert_2^2ds\leq C(M^2+1)\int_t^{T}\mathbb{I}_{[0,\tau_M[}(s)\Vert p_n\Vert_{V}^2ds \notag\\
& +\int_t^T2\mathbb{I}_{[0,\tau_M[}(s)(g-G^*q_n,p_n)ds+\sum_{ \k\geq 1}\int_t^T2(q_ne_\k,p_n)_Vd\beta_\k(s).
\end{align}
Let $\delta >0$. Thanks to Remark \ref{Rmq-adjoint*noise}	and	since $V\hookrightarrow (L^2(D))^2$, one has  
\begin{align*}
&\vert \int_t^T\mathbb{I}_{[0,\tau_M[}(s)(g-G^*q_n,p_n)ds\vert \leq C\int_t^{\tau_M}\Vert g\Vert_2^{2}ds+C_p\int_t^{\tau_M}\Vert p_n\Vert_2^{2}ds+ C \int_t^{\tau_M}\vert(G^*q_n,p_n)\vert ds\\
&\leq C\int_t^{T}\mathbb{I}_{[0,\tau_M[}(s)\Vert g\Vert_2^{2}ds+ \delta C( G^*)[ \int_t^{T} \Vert q_n\Vert_{L_ 2(\mathbb{H};V)}^2 ds+ C_\delta(T)\int_t^{T}\mathbb{I}_{[0,\tau_M[}(s) \Vert p_n\Vert_V^{2} ds].
\end{align*} By taking the expectation in \eqref{V-estim-adjoint1},	using	\eqref{martingale-property-Back}	and	an appropriate choice of $\delta ,$ we are able to deduce for any $0\leq t< \tau_M$ that
 \begin{align*}
 \E\Vert p_n(t)\Vert_V^{2}+\E\big[\int_t^{T}\Vert q_n\Vert_{L_2(\mathbb{H};V)}^2ds\big]&\leq C( G^*)(M^2+1)\E\int_t^{T}\mathbb{I}_{[0,\tau_M[}(s)\Vert p_n\Vert_{V}^{2}ds\\
 &\quad+ C\E\int_t^{T}\mathbb{I}_{[0,\tau_M[}(s)\Vert g\Vert_2^{2}ds.
 \end{align*}
 Therefore, Gr\"onwall's inequality ensures the existence of $C(M) >0$ such that
 \begin{align}\label{estimate-adjoint-V-L2}
 \sup_{r\in [t,T]}\E\Vert p_n(r)\Vert_V^{2}+\E\big[\int_t^{T}\Vert q_n\Vert_{L_2(\mathbb{H};V)}^2ds\big]\leq Ce^{C( G^*)(M^2+1)T}\E\int_t^{T}\mathbb{I}_{[0,\tau_M[}(s)\Vert g\Vert_2^{2}ds:=C(M).
 \end{align} 
\subsubsection*{Step 2. Estimate in the space $W$}
For any $t\in [0,T]$, the equation \eqref{approximation-adjoint} gives
	\begin{align}\label{adjoint-W}
	(p_n(t),h_i)_V&+\int_t^{T}\mathbb{I}_{[0,\tau_M[}(s)(F(p_n),h_i)ds\\&=\int_t^T\hspace*{-0.25cm}\mathbb{I}_{[0,\tau_M[}(s)(B(q_n),h_i)ds+\sum_{ \k\geq 1}\int_t^{T}(q_ne_\k,h_i)_Vd\beta_\k, \quad\forall 1\leq i\leq n,\nonumber\\
		\text{	where	}	F(u)&:= -\nu \Delta u
		-\text{curl }v(y)\times u+\text{curl }v(y\times 
		u)-(\alpha_1+\alpha_2)\text{div}\big[A(y)A(u)+A(u)A(y)\big]&\notag\\[1mm]
		&\quad-\beta\text{div}\big[\vert A(y)\vert^2A(u)\big]-2\beta\text{div}\big[(A(y): A(u))A(y)\big];\;
		B(q_n):=g-G^*q_n.\notag
	\end{align}

Let $\widetilde{F}_n$ and $\widetilde{B}_n$ be the solutions of \eqref{Stokes} for $f=F(p_n)$ and $f=B(q_n)$. Therefore
\begin{equation}\label{V-W-relation-adjoint}
(F(p_n),h_i)=(\widetilde{F}_n,h_i)_V, \quad (B(q_n),h_i)=(\widetilde{B}_n,h_i)_V ;\quad \forall 1\leq i \leq n
\end{equation}
Multiplying \eqref{adjoint-W} by   $\mu_i$ and using \eqref{basis1}, we deduce	for	$1\leq i\leq n$
\begin{align*}
(p_n(t),h_i)_W&+\int_t^{T}\mathbb{I}_{[0,\tau_M[}(s)(\widetilde{F}_n,h_i)_Wds=\int_t^T\hspace*{-0.25cm}\mathbb{I}_{[0,\tau_M[}(s)(\widetilde{B}_n,h_i)_Wds+\sum_{ \k\geq 1}\int_t^{T}(q_ne_\k,h_i)_Wd\beta_\k.	
\end{align*}
Now, using Ito's formula for the function  $u\mapsto u^2$ and the process   $(p_n(t),h_i)_W$, next  multiplying  by $\frac{1}{\mu_i}$ and summing over $i=1,\cdots,n$,  we are able to infer that
\begin{align*}
&\Vert p_n(t)\Vert_W^2+2\int_t^{T}\mathbb{I}_{[0,\tau_M[}(s)(\widetilde{F}_n,p_n)_Wds\\&=2\int_t^T\hspace*{-0.25cm}\mathbb{I}_{[0,\tau_M[}(s)(\widetilde{B}_n,p_n)_Wds+2\sum_{ \k\geq 1}\int_t^{T}(q_ne_\k,p_n)_Wd\beta_\k-\sum_{i=1}^n\sum_{ \k\geq 1}\int_t^{T}(q_ne_\k,\tilde{h}_i)_W^2ds.
\end{align*}
In addition, we  observe that 
$\displaystyle
	\sum_{i=1}^n\sum_{ \k\geq 1}\int_t^{T}(q_ne_\k,\tilde{h}_i)_W^2ds
	=\sum_{ \k\geq 1}\int_t^T\Vert q_ne_\k\Vert _W^2ds= \int_t^T\Vert q_n\Vert_{L_2(\mathbb{H};W)}^2ds.$
\\
Taking into account the definition of  inner product in the space $W$, we deduce
\begin{align*}
&\Vert p_n(t)\Vert_W^2+\int_t^T\Vert q_n\Vert_{L_2(\mathbb{H};W)}^2ds=-2\int_t^{T}\mathbb{I}_{[0,\tau_M[}(s)(F(p_n),p_n)ds-2\int_t^{T}\mathbb{I}_{[0,\tau_M[}(s)(F(p_n),\mathbb{P}v(p_n))ds\\&+2\int_t^T\hspace*{-0.25cm}\mathbb{I}_{[0,\tau_M[}(s)(g-G^*q_n,p_n)ds+2\int_t^T\hspace*{-0.25cm}\mathbb{I}_{[0,\tau_M[}(s)(g-G^*q_n,\mathbb{P}v(p_n))ds+2\sum_{ \k\geq 1}\int_t^{T}(q_ne_\k,p_n)_Wd\beta_\k\\&=I_1+I_2+I_3+I_4+I_5.
\end{align*}
Let	us focus on estimating  $I_i$, $i=1,\dots,5.$ 
 By using the boundary conditions, we get
\begin{align*}
&	-I_1=2\int_t^{T}\mathbb{I}_{[0,\tau_M[}(s)(F(p_n),p_n)ds=\int_t^T\hspace*{-0.3cm}2\mathbb{I}_{[0,\tau_M[}(s)\Big\{2\nu(\mathbb{D} p_n,
\mathbb{D}p_n)+b(p_n,y,v(p_n))
-b(y,p_n,v(p_n))\Big\}ds\\ &+\int_t^{T}2\mathbb{I}_{[0,\tau_M[}(s)\Big\{(\alpha_1+\alpha_2)\big(A(y)A(p_n)+A(p_n)A(y),\nabla p_n\big)  +\beta\big( |A(y)|^2A(p_n), \nabla p_n\big)\Big \} ds\nonumber\\
	&+4\beta \int_t^T\mathbb{I}_{[0,\tau_M[}(s)\big(\left(A(p_n):	A(y)\right)A(y),\nabla p_n\big) ds.
\end{align*}
Arguments already detailed in  Subsubsection \ref{subsub1} yields the existence of $C, C_\delta>0$ such that
\begin{align*}
\vert I_1\vert &\leq 	C(M^2+1)\int_t^{T}\mathbb{I}_{[0,\tau_M[}(s)\Vert p_n\Vert_{V}^2,\\
\vert I_3\vert&\leq  C\int_t^{T}\mathbb{I}_{[0,\tau_M[}(s)\Vert g\Vert_2^{2}ds+ \delta C[ \int_t^{T} \Vert q_n\Vert_{L_ 2(\mathbb{H};V)}^2 ds+ C_\delta\int_t^{T}\mathbb{I}_{[0,\tau_M[}(s) \Vert p_n\Vert_V^{2} ds], \quad\forall \delta >0.
\end{align*}
By using \eqref{regularity-approx-adjoint}, we get that 
 $\E(I_5)=0$.
Thanks to Remark \ref{Rmq-adjoint*noise}, there exists $C>0$ such that 
\begin{align*}
\vert 	I_4\vert=&2\vert \int_t^T\mathbb{I}_{[0,\tau_M[}(s)(g-G^*q_n,\mathbb{P}v(p_n))ds\vert\\
& \leq C\int_t^T\mathbb{I}_{[0,\tau_M[}(s)\Vert g\Vert_2^{2}ds+C\int_t^T\mathbb{I}_{[0,\tau_M[}(s)\Vert \mathbb{P}v(p_n)\Vert_2^{2}ds+ C \int_t^T\mathbb{I}_{[0,\tau_M[}(s)\vert(G^*q_n,\mathbb{P}v(p_n))\vert ds\\
&\leq C\int_t^{T}\mathbb{I}_{[0,\tau_M[}(s)\Vert g\Vert_2^{2}ds+ \delta C[ \int_t^{T} \Vert q_n\Vert_{L_ 2(\mathbb{H};W)}^2 ds+ \dfrac{C}{\delta}\int_t^{T}\mathbb{I}_{[0,\tau_M[}(s) \Vert p_n\Vert_W^{2} ds],\quad \forall \delta>0,
\end{align*}
where we used  $W\hookrightarrow (L^2(D))^2$. Finally, we write
\begin{align*}
	I_2&=	-2\int_t^{T}\mathbb{I}_{[0,\tau_M[}(s)(-\nu \Delta p_n
-\text{curl }v(y)\times p_n+\text{curl }v(y\times 
p_n)-(\alpha_1+\alpha_2)\text{div}\big[A(y)A(p_n)+A(p_n)A(y)\big]\\[1mm]
&\qquad-\beta\text{div}\big[\vert A(y)\vert^2A(p_n)\big]-2\beta\text{div}\big[(A(y): A(p_n))A(y)\big],\mathbb{P}v(p_n))ds
=I_2^1+I_2^2+I_2^3+I_2^4.
\end{align*}
We have
$	\vert I_2^1\vert = \vert 2\int_t^{T}\mathbb{I}_{[0,\tau_M[}(s)(-\nu \Delta p_n,\mathbb{P}v(p_n))ds\vert \leq 2\nu \int_t^{T}\mathbb{I}_{[0,\tau_M[}(s)\Vert p_n\Vert_{W}^2ds.$
By performing standard calculations, we show that there exists $C>0$ such that
\begin{align*}
\vert I_2^4\vert&= \vert 2\int_t^{T}\mathbb{I}_{[0,\tau_M[}(s)(-(\alpha_1+\alpha_2)\text{div}\big[A(y)A(p_n)+A(p_n)A(y)\big]-\beta\text{div}\big[\vert A(y)\vert^2A(p_n)\big]\\&\qquad\qquad-2\beta\text{div}\big[(A(y): A(p_n))A(y)\big],\mathbb{P}v(p_n))ds\vert \\
&\leq C \int_t^{T}\mathbb{I}_{[0,\tau_M[}(s)\Vert y\Vert_{W^{2,4}}\Vert p_n\Vert_{W}^2+ C\int_t^{T}\mathbb{I}_{[0,\tau_M[}(s) \Vert y\Vert_{W^{2,4}}^2\Vert p_n\Vert_{W}^2\leq  C(1+M^2)\int_t^{T}\mathbb{I}_{[0,\tau_M[}(s)\Vert p_n\Vert_{W}^2.
\end{align*}
Concerning $I_2^2+I_2^3$,  we invoke  Lemma \ref{IPP2D-2} (see Section \ref{Technical-results}) to get $C>0$ such that
\begin{align*}
&\vert I_2^2+I_2^3\vert= \vert 2\int_t^{T}\mathbb{I}_{[0,\tau_M[}(s)(\text{curl }v(y\times p_n)-\text{curl }v(y)\times p_n,\mathbb{P}v(p_n))ds\vert\\
&\leq C(1+\alpha_1)\int_t^{T}\mathbb{I}_{[0,\tau_M[}(s)\Vert y\Vert_{W^{2,4}}\Vert p_n \Vert_{W}\Vert \mathbb{P}v(p_n)\Vert_{2}ds+2\int_t^{T}\mathbb{I}_{[0,\tau_M[}(s)\vert b(y,v(p_n)-\mathbb{P}v(p_n),\mathbb{P}v(p_n))\vert ds.
\end{align*}
Thanks to \cite[Lemma 5]{Bus-Ift-2} and  $W^{2,4} \hookrightarrow L^\infty$ , there exists $C>0$ depending only on $D$ such that 
\begin{align*}
	\vert b(y,v(p_n)-\mathbb{P}v(p_n),\mathbb{P}v(p_n))\vert \leq \Vert y\Vert_{\infty}\Vert v(p_n)-\mathbb{P}v(p_n)\Vert_{H^1}\Vert\mathbb{P}v(p_n) \Vert_2 \leq C \Vert y\Vert_{W^{2,4}}\Vert p_n\Vert_{H^2}\Vert\mathbb{P}v(p_n) \Vert_2.
\end{align*}
Hence,  there exists $C>0$ such that
\begin{align*}
\vert I_2^2+I_2^3\vert
\leq C(1+\alpha_1)\int_t^{T}\mathbb{I}_{[0,\tau_M[}(s)\Vert y\Vert_{W^{2,4}}\Vert p_n \Vert_{W}\Vert \mathbb{P}v(p_n)\Vert_{2}ds\leq  CM(1+\alpha_1)\int_t^{T}\mathbb{I}_{[0,\tau_M[}(s)\Vert p_n \Vert_{W}^2ds.
\end{align*}
By taking the expectation and gathering the previous estimates, we obtain
\begin{align*}
&\E\Vert p_n(t)\Vert_W^2+\E\int_t^T\Vert q_n\Vert_{L_2(\mathbb{H};W)}^2ds\leq  C(\alpha_1,\alpha_2,\beta)(1+M^2)\E\int_t^{T}\mathbb{I}_{[0,\tau_M[}(s)\Vert p_n\Vert_{W}^2ds\\
&+C\E\int_t^{T}\mathbb{I}_{[0,\tau_M[}(s)\Vert g\Vert_2^{2}ds
+ \delta C[ \E\int_t^{T} \Vert q_n\Vert_{L_ 2(\mathbb{H};W)}^2 ds+ \dfrac{C}{\delta}\E\int_t^{T}\mathbb{I}_{[0,\tau_M[}(s) \Vert p_n\Vert_W^{2} ds],\quad \forall \delta>0.
\end{align*}
An appropriate choice of $\delta$ gives 
\begin{align*}
\E\Vert p_n(t)\Vert_W^2+\E\int_t^T\Vert q_n\Vert_{L_2(\mathbb{H};W)}^2ds\leq  C(1+M^2)\E\int_t^{T}\mathbb{I}_{[0,\tau_M[}(s)\Vert p_n\Vert_{W}^2ds+C\E\int_t^{T}\mathbb{I}_{[0,\tau_M[}(s)\Vert g\Vert_2^{2}ds.
\end{align*}
Now, Gronwall's inequality ensure
\begin{align}\label{pWestimate}
&\E\Vert p_n(t)\Vert_W^2+\E\int_t^T\Vert q_n\Vert_{L_2(\mathbb{H};W)}^2ds\leq  Ce^{C(\alpha_1,\alpha_2,\beta)(1+M^2)T}\E\int_t^{T}\mathbb{I}_{[0,\tau_M[}(s)\Vert g\Vert_2^{2}ds\leq  C(M), 
\end{align}
$\forall t\in [0,T]$. Let  $t\geq \tau_M$, it follows from \eqref{pWestimate} that 
$
\E\Vert p_n(t)\Vert_W^2+ \E\int_t^T\Vert q_n\Vert_{L_2(\mathbb{H};W)}^2ds=0, \; \forall t\geq \tau_M.
$
As a conclusion, we state the next proposition.
\begin{prop}\label{prop-estimate-pn}
There exist $ C(M)>0$ and $ C(\alpha_1,\alpha_2,\beta,\nu,T,M)>0$ such that
	\begin{align}
\sup_{s\in [0, T]}\E\Vert p_n(s)\Vert_{W}^{2} \leq 
C(\alpha_1,\alpha_2,\beta,\nu,T,M),\text{		 and 	}
\E\int_{0}^T\Vert q_n\Vert_{L_2(\mathbb{H}; W)}^2ds\leq C(M).
\end{align}
 Moreover, for fixed $M\in \mathbb{N}$ we have
$
\displaystyle\E\sup_{s\in [\tau_M,T]}\Vert p_n(s)\Vert_{W}^{2}=0 \text{  and } \E\int_{\tau_M}^T\Vert q_n\Vert_{L_2(\mathbb{H}; W)}^2ds=0.
$
\end{prop}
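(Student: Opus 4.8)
The plan is to obtain both bounds by an energy method applied directly to the Galerkin system \eqref{approximation-adjoint}, exploiting that the cut-off $\mathbb{I}_{[0,\tau_M[}$ pins the state $y$ to the ball $\Vert y\Vert_{W^{2,4}}\leq M$ wherever the drift is active, and that the terminal condition $p_n(T)=0$ lets us integrate backward from $T$ to $t$. I would run the argument in two stages, first in $V$ and then in $W$, the second stage being lifted from the first through the special basis \eqref{basis1}. Throughout, the key point is that every constant produced depends only on $M$, the material parameters, $\nu$, $T$ and $\Vert g\Vert$, never on the Galerkin dimension, so that the estimates are uniform in $n$.

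For the $V$-estimate I would set $\phi=h_i$ in \eqref{approximation-adjoint}, apply It\^o's formula to $(p_n,h_i)_V^2$, sum over $i$, and collect the dissipative part $4\nu\int_t^T\mathbb{I}_{[0,\tau_M[}\Vert\mathbb{D}p_n\Vert_2^2$, the convective/third-grade part, the source, and the noise. The nonlinear terms are all of the form $\Vert y\Vert_{W^{2,4}}\Vert p_n\Vert_V^2$ after invoking Lemma \ref{tecnical-lemma} and the computations of \cite[Section 4.2]{yas-fer}, hence bounded by $C(M^2+1)\Vert p_n\Vert_V^2$ on $[0,\tau_M[$. The It\^o correction produces exactly $\int_t^T\Vert q_n\Vert_{L_2(\mathbb{H};V)}^2$, which I want to keep on the left, while the adjoint-noise source $(G^*q_n,p_n)$ is split by Young's inequality against a small multiple of $\Vert q_n\Vert_{L_2(\mathbb{H};V)}^2$ using the boundedness of $G^*$ from Remark \ref{Rmq-adjoint*noise}. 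Taking expectations annihilates the martingale term thanks to the integrability \eqref{regularity-approx-adjoint}, and Gr\"onwall's inequality yields the $V$-bound.

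For the $W$-estimate I would use that $(v,h_i)_W=\mu_i(v,h_i)_V$: multiplying the weak form by $\mu_i$ and replacing $F(p_n)$, $B(q_n)$ by the solutions $\widetilde F_n$, $\widetilde B_n$ of the modified Stokes problem \eqref{Stokes} turns the identity into one in the $W$-inner product, to which It\^o's formula applies. This produces $\Vert p_n(t)\Vert_W^2+\int_t^T\Vert q_n\Vert_{L_2(\mathbb{H};W)}^2$ on the left and the five terms $I_1,\dots,I_5$ on the right. Here $I_1$ and $I_2^4$ reduce to the previous $V$-type bounds, $I_3,I_4$ are absorbed via Young and the embedding $W\hookrightarrow(L^2(D))^2$ together with a small $\delta$ to preserve $\Vert q_n\Vert_{L_2(\mathbb{H};W)}^2$, and $\E I_5=0$ by \eqref{regularity-approx-adjoint}. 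The delicate piece, and the step I expect to be the main obstacle, is $I_2^2+I_2^3$ coming from the curl terms $\text{curl }v(y\times p_n)-\text{curl }v(y)\times p_n$ tested against $\mathbb{P}v(p_n)$: this is genuinely two-dimensional and must be controlled through Lemma \ref{IPP2D-2} together with \cite[Lemma 5]{Bus-Ift-2} and $W^{2,4}\hookrightarrow L^\infty$, which is precisely why the analysis—and ultimately the uniqueness—is restricted to $d=2$. After choosing $\delta$ small and taking expectations, Gr\"onwall again closes the estimate and gives \eqref{pWestimate}.

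Finally, the vanishing statements are read off by specializing \eqref{pWestimate} to $t\geq\tau_M$: for such $t$ the indicator $\mathbb{I}_{[0,\tau_M[}(s)$ equals zero for every $s\in[t,T]$, so the right-hand side of \eqref{pWestimate} is zero, forcing $\E\Vert p_n(t)\Vert_W^2=0$ and $\E\int_t^T\Vert q_n\Vert_{L_2(\mathbb{H};W)}^2=0$ for all $t\geq\tau_M$; the supremum form then follows from the pathwise continuity of $p_n$ recorded in \eqref{regularity-approx-adjoint}. Collecting the $V$- and $W$-bounds yields the two displayed estimates of the proposition.
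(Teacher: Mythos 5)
Your proposal follows the paper's proof essentially verbatim: the same two-stage energy argument (first in $V$ via It\^o on $(p_n,h_i)_V^2$ with the martingale term killed in expectation by \eqref{regularity-approx-adjoint}, then lifted to $W$ through the special basis property \eqref{basis1} and the modified Stokes solutions $\widetilde F_n$, $\widetilde B_n$), the same Young splitting of $(G^*q_n,\cdot)$ against a small multiple of the $q_n$-norm, the same identification of $I_2^2+I_2^3$ as the term requiring Lemma \ref{IPP2D-2}, and the same specialization to $t\geq\tau_M$ for the vanishing statements. No gaps; the argument is correct and coincides with the paper's.
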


\subsubsection{Proof of Theorem \ref{exis-adjoint+estim}}\label{Subse-exis-adjoint-2D}
Here, we will adapt the reasoning previously used to prove the Theorem
\ref{exis-z+estim}.
Due to  Proposition \ref{prop-estimate-pn}, 
we deduce the existence of subsequence (denoted by the same way) $(p_n,q_n)$ and $(\mathcalb{p},\mathcalb{q}) \in  L^2(\Omega, L^2(0,T;W))\times L^2(\Omega, L^2(0,T;L_2(\mathbb{H}; W)))$, where $\mathcalb{p}$ and $\mathcalb{q}$  are  predictables on $\Omega_T$ such that 
\begin{align}
p_n &\rightharpoonup \mathcalb{p} \text{ weakly in } L^2(\Omega, L^2(0,T;W)),\label{cv-pn-weak-1}\\
q_n &\rightharpoonup \mathcalb{q} \text{ weakly  in } L^2(\Omega, L^2(0,T;L_2(\mathbb{H}; W)))\label{cv-qn-weak-1}.
\end{align} 
According to Remark \ref{Rmq-adjoint*noise} we also know that $G^*$ defined by \begin{align*}
G^*:L^2(\Omega\times[0,T]; L_2(\mathbb{H}; (L^2(D))^d)) \to L^2(\Omega\times[0,T]; \mathbb{R});\quad q \mapsto (G^*q,\phi) 
\end{align*}
is linear and bounded.	Recall	that	the	stochastic	It\^o	integral	is linear and continuous. 
 Thus, using \cite[Prop. 21.27 p. 261]{Zeidler}  and \eqref{cv-qn-weak-1} we show that 
\begin{align}
\sum_{ \k\geq 1}\int_t^T(v(q_ne_\k),\phi)d\beta_\k(s)  &\rightharpoonup \sum_{ \k\geq 1}\int_t^T(v(\mathcalb{q}e_\k),\phi)d\beta_\k(s) \text{  in } L^2(\Omega\times [0,T]),\quad \forall t\in [0,T],\label{stochastic-cv-pn}\\
(G^*q_n,\phi)&\rightharpoonup  (G^*\mathcalb{q},\phi) \text{  in } L^2(\Omega\times [0,T]),\quad \forall t\in [0,T]\label{adjoint-cv-qn}.
\end{align}

Setting  $M_n(t) =v(p_n(t))-\sum_{ \k\geq 1}\int_t^Tv(q_ne_\k)d\beta_\k(s)$, 
 $t\in [0,T]$, the relation 
 \eqref{approximation-adjoint} gives 
\begin{align}\label{pn-eqn-1}
&\dfrac{d}{dt}(M_n(t),\phi)=\mathbb{I}_{[0,\tau_M[}\Big\{2\nu(\mathbb{D} p_n,
\mathbb{D}\phi)ds-b(\phi,p_n,v(y))+b(p_n,\phi,v(y))+b(p_n,y,v(\phi))
\nonumber\\[-0.15cm]
&-b(y,p_n,v(\phi))\Big\}+\mathbb{I}_{[0,\tau_M[}\Big\{(\alpha_1+\alpha_2)\big(A(y)A(p_n)+A(p_n)A(y),\nabla \phi\big)  +\beta\big( |A(y)|^2A(p_n), \nabla\phi\big)\Big \}\nonumber\\[-0.15cm]
&+2\beta\mathbb{I}_{[0,\tau_M[}\big(\left(A(p_n):	A(y)\right)A(y),\nabla\phi\big) - \mathbb{I}_{[0,\tau_M[}(g-G^*q_n,\phi).  
\end{align}
With the help of \eqref{cv-pn-weak-1}, \eqref{stochastic-cv-pn} and \eqref{adjoint-cv-qn}, we are able  to pass to the limit in the weak sense in	this equation, as $n\to \infty$,    and deduce
\begin{align}
\label{trace-time-0pn}
&\dfrac{d}{dt}(M(t),\phi)=\mathbb{I}_{[0,\tau_M[}\Big\{2\nu(\mathbb{D} \mathcalb{p},
\mathbb{D}\phi)ds-b(\phi,\mathcalb{p},v(y))+b(\mathcalb{p},\phi,v(y))+b(\mathcalb{p},y,v(\phi))
\nonumber\\[-0.15cm]
&-b(y,\mathcalb{p},v(\phi))\Big\}+\mathbb{I}_{[0,\tau_M[}\Big\{(\alpha_1+\alpha_2)\big(A(y)A(\mathcalb{p})+A(\mathcalb{p})A(y),\nabla \phi\big)  +\beta\big( |A(y)|^2A(\mathcalb{p}), \nabla\phi\big)\Big \}\nonumber\\[-0.15cm]
&+2\beta\mathbb{I}_{[0,\tau_M[}\big(\left(A(\mathcalb{p}):	A(y)\right)A(y),\nabla\phi\big) - \mathbb{I}_{[0,\tau_M[}(g-G^*\mathcalb{q},\phi),  
\end{align}
where 
$M(t) =v(\mathcalb{p}(t))-\sum_{ \k\geq 1}\int_t^Tv(\mathcalb{q}e_\k)d\beta_\k(s)$.
Using Proposition \ref{prop-estimate-pn}, we can verify that
the distributional derivative 
$
\frac{dM}{dt}$ belongs to $
L^2(0,T;L^2(\Omega;W^\prime)).
$
On the other hand,  Proposition \ref{prop-estimate-pn} ensures that $M \in L^\infty(0,T;L^2(\Omega;(L^2(D))^d))$, then we infer that
$
M\in \mathcal{C}([0,T];L^2(\Omega;W^\prime)).
$
Taking into account the properties of the stochastic  integral, we conclude that $  v(\mathcalb{p})\in \mathcal{C}([0,T];L^2(\Omega;W^\prime)).$ Thus $v(\mathcalb{p})\in \mathcal{C}_w([0,T];L^2(\Omega;(L^2(D))^d)),$ thanks to \cite[Lemma. 1.4 p. 263]{Temam77}.	To verify that $\mathcalb{p}(T)=0$, let $\phi \in  \overline{W_n}$ and $\xi \in \mathcal{C}^\infty([t,T])$ for $t\in [0,T]$ and note that
\begin{align}\label{IPPtimep}
&\int_t^T\langle	\dfrac{dM(s)}{ds},\phi\xi\rangle_{W^*,W} ds=-\int_t^T[(M(s),\phi)\dfrac{d\xi}{ds}]ds
+(M(T),\phi)_V\xi(T).
\end{align}
First,	we	multiply \eqref{pn-eqn-1} by $\mathbb{I}_A\xi$,	$\xi \in \mathcal{C}^\infty([0,t])$ for $t\in ]0,T]$	and integrate over $\Omega_T$.	Then,	we	pass	to	the	limit	as	$n\to\infty$	and	we	use	\eqref{IPPtimep}.	It	follows	by		standard	arguments	(see	e.g.	\cite[Prop.	3]{Val-Zim19}) that, for all $t\in [0,T]$, $v(p_n(t)) \rightharpoonup v(\mathcalb{p}(t))$ in $L^2(\Omega,(L^2(D))^d)$. Hence the  proof of Theorem \ref{exis-adjoint+estim} is completed.

\begin{remark}
\label{22_3} 
$i)$	Let us stress here that the main difference between the adjoint equation in 2D and 3D relies in the analysis of the term $b(\mathcalb{p},y,v(\phi))-b(y,\mathcalb{p},v(\phi))$.  
In the former case, we can take advantage of the equality
$
b(\mathcalb{p},y,v(\phi))-b(y,\mathcalb{p},v(\phi))=(\text{curl}v(y\times \mathcalb{p}), \phi),
$
which  plays a crucial role  in the deduction of the  
$W$-estimate for the solution of 
\eqref{adjoint} (see  \eqref{adjoint-W}-\eqref{V-W-relation-adjoint}). Instead, in 3D we have the more complicated  relation
	\begin{align*}
	b(\mathcalb{p},y,v(\phi))-b(y,\mathcalb{p},v(\phi)) = (\text{curl}v(y\times \mathcalb{p}), \phi)+I_{\partial D},
	\end{align*}
unfortunately, the boundary terms $I_{\partial D}$	(see	\eqref{boundary-3D})
resulting from integration by parts do not vanish, and 
are very difficult to handle  in order to derive the $W$-estimate.\\
$ii)$ We recall that the  derivation of the first order optimality conditions can be performed by using the formal Lagrange method, where the   \textit{"formal Lagrangian function"} $L(y,U,\mathcalb{p})$  becomes   "meaningful " after  only one integration by parts, and $D_yL(y,U,\mathcalb{p})$\footnote{$D_vL$ denotes the derivative of $\mathcal{L}$ with respect to the variable $v$.} leads to the variational equation  \eqref{22_1} and the  "formal" optimality condition 
\begin{align}\label{Optimality-formal}
	D_UL(y(\widetilde{U_M}),\widetilde{U_M},\mathcalb{p})(u-\widetilde{U_M}) \geq 0, \quad\forall u \in \mathcal{U}_{ad}^p.
	\end{align} Therefore, in 2D it is the variational equation  \eqref{22_1}, which will play the most  important role to rigorously prove the necessary first order optimality conditions \eqref{Optimality-formal} (see Section \ref{Section-Optimality-condition}).  This observation motivates the natural extension of the 2D adjoint equation  to the 3D setting.
\end{remark}


\subsection{Existence of solution to the backward adjoint equation in 3D}
Taking into account  Remark \ref{22_3}, it  is expected that the passage from the 2D to the 3D framework is accompanied by some loss  of regularity for the solution of the adjoint system. 

\begin{definition}\label{Def-adjoint3D}
A stochastic processes $(\mathbf{p},\mathbf{q})$ is  said to be 
	an adjoint state for the control problem  if the following properties hold
	\begin{enumerate}
		\item[i)] 	$\mathbf{p}$ and $\mathbf{q}$ are predictables  processes with values in $V$ and $L_2(\mathbb{H}; V)$, respectively.
		
		\item[ii)]  	 $\mathbf{p}\in  L^\infty(0,T;L^2(\Omega;V))$,  $\mathbf{q}\in L^{2}(\Omega_T;L_2(\mathbb{H}; V))$ and $\mathbf{p}\in \mathcal{C}_w([0,T];L^2(\Omega;V)).$ 
		\item[iii)] 	  For any $t\in [0,T]$, P-a.s.  in $\Omega$, the following adjoint equation is satisfied
			\begin{align}
		\label{adjoint3D}
			&(\mathbf{p}(t),\phi)+2\alpha_1(\mathbb{D}\mathbf{p}(t),\mathbb{D}\phi)\notag+2\beta \int_t^T\mathbb{I}_{[0,\tau_M[}(s)\big(\left(A(\mathbf{p}):	A(y)\right)A(y),\nabla\phi\big) ds\\
		&+\int_t^T\mathbb{I}_{[0,\tau_M[}(s)\Big\{2\nu(\mathbb{D} \mathbf{p},
		\mathbb{D}\phi)ds-b(\phi,\mathbf{p},v(y))+b(\mathbf{p},\phi,v(y))+b(\mathbf{p},y,v(\phi))
		-b(y,\mathbf{p},v(\phi))\Big\}ds\notag\\[-0.15cm]
		&+\int_t^T\mathbb{I}_{[0,\tau_M[}(s)\Big\{(\alpha_1+\alpha_2)\big(A(y)A(\mathbf{p})+A(\mathbf{p})A(y),\nabla \phi\big)  +\beta\big( |A(y)|^2A(\mathbf{p}), \nabla\phi\big)\Big \} ds\notag\\[-0.15cm]
		&\quad= \int_t^T\mathbb{I}_{[0,\tau_M[}(s)(g-G^*\mathbf{q},\phi)ds+\sum_{ \k\geq 1}\int_t^T(\mathbf{q}e_\k,\phi)_Vd\beta_\k(s),\quad\forall\phi \in W,
		\end{align}
		where $y$ is the solution of \eqref{I}, in the sense of Definition \ref{Def-strong-sol-main} and $\tau_M$ is given by \eqref{stopping-time}.
	\end{enumerate} 
\end{definition}
\begin{theorem}\label{exis-adjoint+estim3D}
There exists, at least, a pair $(\mathbf{q},\mathbf{q})$,
	which is a solution of  the adjoint equation \eqref{adjoint3D}, according to the 
	Definition \ref{Def-adjoint3D}, verifying 
	 the properties
	\begin{align}\label{adjoint-estimate3D}
\exists C(M)>0	\text{	such	that	}	\sup_{s\in [0,T]}\E\Vert \mathbf{p}(s)\Vert_{V}^{2} \leq C(M)\text{		 and 	}
	\E\int_{0}^T\Vert \mathbf{q}\Vert_{L_2(\mathbb{H}; V)}^2ds\leq C(M).
	\end{align}
 Moreover, for fixed $M\in \mathbb{N}$ we have
$	\displaystyle\E\sup_{s\in [\tau_M,T]}\Vert \mathbf{p}(s)\Vert_{V}^{2}=0 \text{  and } \E\int_{\tau_M}^T\Vert \mathbf{q}\Vert_{L_2(\mathbb{H}; V)}^2ds=0.
$
	\end{theorem}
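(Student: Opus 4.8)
The plan is to run the same construction as for the two–dimensional adjoint system in Theorem~\ref{exis-adjoint+estim}, but to stop at the $V$–level. As pointed out in Remark~\ref{22_3}, the boundary terms generated by $b(\mathbf{p},y,v(\phi))-b(y,\mathbf{p},v(\phi))$ do not vanish in dimension three, so no $W$–estimate analogous to Step~2 of the proof of Theorem~\ref{exis-adjoint+estim} is available; consequently we only aim at a solution in the class \eqref{adjoint-estimate3D} and we relinquish uniqueness. First I would keep the Galerkin scheme \eqref{approximation-adjoint} built on the special basis \eqref{basis1}, and invoke the backward–SPDE solvability result \cite[Prop.~6.20]{backward-SPDE} to obtain, for each $n$, a unique predictable pair $(p_n,q_n)$ enjoying the regularity \eqref{regularity-approx-adjoint}.

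The core of the argument is the uniform $V$–estimate, which is precisely Step~1 of the proof of Theorem~\ref{exis-adjoint+estim} and is dimension independent. Testing \eqref{approximation-adjoint} with $\phi=h_i$, applying It\^o's formula to $(p_n(t),h_i)_V^2$ and summing over $i$, the antisymmetric trilinear contribution $b(p_n,y,v(p_n))-b(y,p_n,v(p_n))$ is bounded by $C\Vert y\Vert_{W^{2,4}}\Vert p_n\Vert_V^2\le CM\Vert p_n\Vert_V^2$ thanks to Lemma~\ref{tecnical-lemma}, the $(\alpha_1+\alpha_2)$– and $\beta$–terms are estimated exactly as before, and the adjoint–noise contribution is absorbed via Remark~\ref{Rmq-adjoint*noise}, the embedding $V\hookrightarrow(L^2(D))^d$ and Young's inequality. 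Taking expectations kills the martingale term (recall \eqref{regularity-approx-adjoint}), and Gr\"onwall's lemma yields the analogue of \eqref{estimate-adjoint-V-L2}, namely $\sup_{r\in[t,T]}\E\Vert p_n(r)\Vert_V^2+\E\int_t^T\Vert q_n\Vert_{L_2(\mathbb{H};V)}^2\,ds\le C(M)$ uniformly in $n$. None of these steps uses a $W$–bound, so they transfer verbatim to the three–dimensional setting and already give \eqref{adjoint-estimate3D} at the approximate level.

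By reflexivity I would then extract a subsequence with $p_n\rightharpoonup\mathbf{p}$ in $L^2(\Omega;L^2(0,T;V))$ and $q_n\rightharpoonup\mathbf{q}$ in $L^2(\Omega;L^2(0,T;L_2(\mathbb{H};V)))$. The decisive simplification with respect to the state equation \eqref{I} is that the adjoint system \eqref{adjoint3D} is \emph{linear} in the unknowns $(\mathbf{p},\mathbf{q})$: for a fixed (smooth enough) $y$ and any fixed test function $\phi\in W$, every deterministic term is a bounded linear functional of $p_n$ or of $\nabla p_n$, so weak convergence alone suffices to pass to the limit, with no need for strong compactness. The stochastic integral and the $G^*$–term are handled by the linearity and continuity of the It\^o integral together with \cite[Prop.~21.27 p.~261]{Zeidler}, exactly as in \eqref{stochastic-cv-pn}--\eqref{adjoint-cv-qn}. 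This produces the weak (distributional–in–time) form of \eqref{adjoint3D} for $(\mathbf{p},\mathbf{q})$.

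It remains to recover the time regularity and the terminal condition, and here I would mimic the closing argument of Theorem~\ref{exis-adjoint+estim} but work in $W'$ rather than with $v(\cdot)$, since in $3$D we only dispose of $V$–regularity: from the uniform estimate the $W'$–valued process $t\mapsto(\mathbf{p}(t),\cdot)_V-\sum_{\k\ge1}\int_t^T(\mathbf{q}e_\k,\cdot)_V\,d\beta_\k$ has a time–derivative in $L^2(0,T;L^2(\Omega;W'))$, hence is continuous into $W'$; combined with $\mathbf{p}\in L^\infty(0,T;L^2(\Omega;V))$ and \cite[Lemma.~1.4 p.~263]{Temam77} this gives $\mathbf{p}\in\mathcal{C}_w([0,T];L^2(\Omega;V))$, and the time integration by parts (analogue of \eqref{IPPtimep}) together with a \cite[Prop.~3]{Val-Zim19}–type argument identifies $\mathbf{p}(T)=0$. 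The vanishing statements on $[\tau_M,T]$ then follow at once: for $t\ge\tau_M$ the factor $\mathbb{I}_{[0,\tau_M[}$ annihilates every forcing and coefficient term on $[t,T]$, so the energy identity forces $\E\Vert p_n(t)\Vert_V^2+\E\int_t^T\Vert q_n\Vert_{L_2(\mathbb{H};V)}^2\,ds=0$, which survives the passage to the limit. The real obstacle is structural rather than computational: the non-vanishing boundary term $I_{\partial D}$ of Remark~\ref{22_3} blocks any $W$–estimate, and thereby blocks the energy estimate on the difference of two solutions that would be needed for uniqueness — which is exactly why only the existence of \emph{at least one} adjoint state can be asserted when $d=3$.
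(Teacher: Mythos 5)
Your proposal is correct and follows essentially the same route as the paper: Galerkin approximation on the special basis with the backward-SPDE solvability result, the dimension-independent $V$-estimate of Step~1 of the 2D proof (the paper's Subsubsection on a priori estimates), weak compactness and passage to the limit by linearity, identification of the terminal condition via the $W'$-valued process and a \cite[Prop.~3]{Val-Zim19}-type argument, and the vanishing on $[\tau_M,T]$ read off from the energy identity. The only cosmetic difference is that the paper's 3D scheme \eqref{approximation-adjoint3D} writes the duality pairings as $(p_n,v(\phi))$ and $(q_ne_\k,v(\phi))$ instead of $(v(p_n),\phi)$, which coincide on $W_n$, so nothing of substance changes.
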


\subsubsection{Proof of Theorem \ref{exis-adjoint+estim3D}}
As in the 2D case (see Section \ref{22_7}) we consider 
$ p_n(t)=\sum_{i=1}^n d_i(t)h_i$	and 
$q_n(t)\in L_2(\mathbb{H}, W_n),$	  $t\in [0,T]$.
The approximated problem for \eqref{adjoint3D} reads 
\begin{align}\label{approximation-adjoint3D}
&(p_n(t),v(\phi))+\int_t^T\hspace*{-0.25cm}\mathbb{I}_{[0,\tau_M[}(s)\Big\{2\nu(\mathbb{D} p_n,
\mathbb{D}\phi)ds-b(\phi,p_n,v(y))+b(p_n,\phi,v(y))+b(p_n,y,v(\phi))
-b(y,p_n,v(\phi))\Big\}ds\nonumber\\[-0.15cm]
&+\int_t^T\hspace*{-0.25cm}\mathbb{I}_{[0,\tau_M[}(s)\Big\{(\alpha_1+\alpha_2)\big(A(y)A(p_n)+A(p_n)A(y),\nabla \phi\big)  +\beta\big( |A(y)|^2A(p_n), \nabla\phi\big)\Big \} ds\\[-0.15cm]
&+2\beta \int_t^T\hspace*{-0.25cm}\mathbb{I}_{[0,\tau_M[}(s)\big(\left(A(p_n):	A(y)\right)A(y),\nabla\phi\big) ds= \int_t^T\hspace*{-0.25cm}\mathbb{I}_{[0,\tau_M[}(s)(g-G^*q_n,\phi)ds+\sum_{ \k\geq 1}\int_t^T(q_ne_\k,v(\phi))d\beta_\k(s),  
\nonumber
\end{align}
for any $\phi \in W_n.$  Arguments already detailed ensure the existence 
  of a  unique pair of predictable processes $(p_n,q_n)$ such that  \eqref{approximation-adjoint3D} holds  for any $t\in [0,T]$	and
 \begin{align}\label{regularity-approx-adjoint-3D}
p_n\in L^r(\Omega;\mathcal{C}([0,T],W_n)),\quad q_n \in L^r(\Omega;L^2(0,T;L_2(\mathbb{H},W_n)),\qquad \forall  1\leq r\leq 2(d+1).
\end{align}
Arguments already detailed (see Subsubsection \ref{apriori-estimate-pn2D})  yield the existence of $C(M) >0$ such that
\begin{align*}
\sup_{r\in [t,T]}\E\Vert p_n(r)\Vert_V^{2}+\E\big[\int_t^{T}\Vert q_n\Vert_{L_2(\mathbb{H};V)}^2ds\big]\leq Ce^{C( G^*)(M^2+1)T}\E\int_t^{T}\mathbb{I}_{[0,\tau_M[}(s)\Vert g\Vert_2^{2}ds, \forall t\in [0,T].
\end{align*}
	It follows  that 
$
	\E\Vert p_n(t)\Vert_V^2+ \E\int_t^T\Vert q_n\Vert_{L_2(\mathbb{H};V)}^2ds=0, \quad \forall t\geq \tau_M.
$
Finally,	the proof follows from	straightforward adaptation   of the proof of Theorem  \ref{exis-adjoint+estim}.	On	the 	one	hand,	we notice	that	the	\textit{uniform	estimates}	allows	to	pass	to	the	limit	in	\eqref{approximation-adjoint3D}.	On	the	other	hand,	by	standard	arguments		we	get:
	$\forall	t\in [0,T]$, $(p_n(t),\phi)_V \rightharpoonup (\mathbf{p}(t),\phi)_V$ in $L^2(\Omega)$, for $\phi \in V$,
	which	allows	to	identify	the	terminal	condition.
\section{Duality relation and  optimality condition}\label{Section-Optimality-condition}
\begin{prop}\label{Duality-approx}
Let $\psi \in L^{p}((\Omega_T,\mathcal{P}_T),(H^1(D))^d)
$ and $g=y-y_d$.	Let $z_n$ be the solution of \eqref{approximation},
$(p_n,q_n)$ be the solution of  \eqref{approximation-adjoint} in	2D	and $(\mathbf{p}_n,\mathbf{q}_n)$ be the solution of \eqref{approximation-adjoint3D}	in	3D. Then, we have 
	\begin{align}
	\label{duality-approx}
	\E\int_0^{T}\mathbb{I}_{[0,\tau_M[}(\psi,p_n)ds=\E
	\int_0^T\hspace*{-0.25cm}\mathbb{I}_{[0,\tau_M[}(s)(g,z_n)ds=	\E \int_0^T\hspace*{-0.25cm}\mathbb{I}_{[0,\tau_M[}(s)(\psi,\mathbf{p}_n)dt.
	\end{align}
\end{prop}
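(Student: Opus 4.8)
The plan is to prove the identity \eqref{duality-approx} by a single application of the It\^o product formula to the finite-dimensional process $t\mapsto (z_n(t),p_n(t))_V$, exploiting the boundary data $z_n(0)=0$ and $p_n(T)=0$. Writing $z_n=\sum_{i=1}^n c_ih_i$ and $p_n=\sum_{i=1}^n d_ih_i$ in the $V$-orthonormal basis $\{h_i\}$ of \eqref{basis1}, we have $(z_n(t),p_n(t))_V=\sum_{i=1}^n c_i(t)d_i(t)$, with $c_i=(z_n,h_i)_V$, $d_i=(p_n,h_i)_V$, and the product rule gives
$$\E\sum_{i=1}^n\big[c_i(T)d_i(T)-c_i(0)d_i(0)\big]=\E\int_0^T\sum_{i=1}^n\big[d_i\,dc_i+c_i\,dd_i+d[c_i,d_i]\big].$$
Since the left-hand side vanishes, it suffices to identify the three contributions on the right. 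First I would read off $dc_i$ from \eqref{approximation} with $\phi=h_i$ and $dd_i$ from \eqref{approximation-adjoint} with $\phi=h_i$, keeping careful track of the fact that the backward integral reverses the sign both in the drift and in the martingale coefficient.

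Next I would collect the drift terms. Summing $\sum_i d_i\,(dc_i)_{\mathrm{drift}}$ reproduces, against the frozen test function $p_n\in W_n$, the source $\mathbb{I}_{[0,\tau_M[}(\psi,p_n)$ minus the full linearized bilinear form of \eqref{approximation}, while $\sum_i c_i\,(dd_i)_{\mathrm{drift}}$ reproduces, against $z_n\in W_n$, the adjoint bilinear form of \eqref{approximation-adjoint} minus $\mathbb{I}_{[0,\tau_M[}(g-G^*q_n,z_n)$. The cancellation of the two bilinear forms is exactly the defining property of the adjoint system: the viscous term $2\nu(\mathbb{D}z_n,\mathbb{D}p_n)$ and the tensorial $(\alpha_1+\alpha_2)$- and $\beta$-terms match by the symmetry of $A$ and of the contraction $A:B$, while the four trilinear terms match after using $b(u,w,\phi)=-b(u,\phi,w)$ for $u\in V$; concretely $b(y,v(z_n),p_n)=-b(y,p_n,v(z_n))$ and $b(z_n,v(y),p_n)=-b(z_n,p_n,v(y))$, so the linearized quadruple equals the adjoint quadruple term by term. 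Hence only the source terms survive from the drift. The martingale parts of $d(c_id_i)$ have zero expectation thanks to the integrability \eqref{local-time-lin} and \eqref{regularity-approx-adjoint} of $z_n,p_n,q_n$, as already exploited in \eqref{martingale-property-Back}.

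The remaining and most delicate point is the It\^o correction $\sum_i d[c_i,d_i]$. The martingale coefficient of $c_i$ along $\beta_\k$ is $\mathbb{I}_{[0,\tau_M[}(\nabla_y\sigma_\k(\cdot,y)z_n,h_i)$ and that of $d_i$ is $-(v(q_ne_\k),h_i)=-(q_ne_\k,h_i)_V$, so
$$\sum_{i=1}^n d[c_i,d_i]=-\mathbb{I}_{[0,\tau_M[}\sum_{\k\ge1}\Big(\sum_{i=1}^n(\nabla_y\sigma_\k(\cdot,y)z_n,h_i)(q_ne_\k,h_i)_V\Big)dt.$$
Here I would use that $q_ne_\k\in W_n$ together with the $V$-orthonormality of $\{h_i\}$, which gives $\sum_i(q_ne_\k,h_i)_Vh_i=q_ne_\k$, so the inner sum collapses to $(\nabla_y\sigma_\k(\cdot,y)z_n,q_ne_\k)$; summing over $\k$ and invoking the definition \eqref{adjoint-noise} of $G^*$ yields $\sum_i d[c_i,d_i]=-\mathbb{I}_{[0,\tau_M[}(z_n,G^*q_n)\,dt$. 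This exactly cancels the term $+\mathbb{I}_{[0,\tau_M[}(G^*q_n,z_n)$ produced by the adjoint drift, leaving
$$0=\E\int_0^T\mathbb{I}_{[0,\tau_M[}\big[(\psi,p_n)-(g,z_n)\big]dt,$$
which is the first equality in \eqref{duality-approx}. The second equality follows by the identical computation applied to $(z_n,\mathbf{p}_n)_V$, using \eqref{approximation-adjoint3D} in place of \eqref{approximation-adjoint}: the operator and noise terms there have the same structure, so the same cancellations occur. The main obstacle is the bookkeeping of signs in the backward It\^o calculus and, above all, verifying that the cross-variation reproduces precisely $(z_n,G^*q_n)$, since it is this identity that forces the adjoint noise term $G^*q_n$ and makes the duality close.
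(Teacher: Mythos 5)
Your proposal is correct and follows essentially the same route as the paper: the paper also applies the It\^o product formula to $(z_n(t),\mathbf{p}_n(t))_V$, uses the boundary data $z_n(0)=0=p_n(T)$, cancels the two bilinear forms via the antisymmetry of $b$ and the symmetry of the $A$-tensors, identifies the cross-variation with $\sum_\k(\nabla_y\sigma_\k(\cdot,y)z_n,\mathbf{q}_ne_\k)=(z_n,G^*\mathbf{q}_n)$ so that it cancels the adjoint drift term, and concludes by taking expectations to kill the martingale parts. The only cosmetic difference is that you carry out the computation coordinatewise in the basis $\{h_i\}$, whereas the paper states the resulting identity directly.
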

\begin{proof}
 Set $\phi=h_i$ in \eqref{approximation} and use that $(v(z_n(\cdot)),h_i)=(z_n(\cdot),h_i)_V$, we obtain
\begin{align}
\label{approximation-duality}
&(z_n(t),h_i)_V+\int_0^{t}\hspace*{-0.15cm}\mathbb{I}_{[0,\tau_M[}\{2\nu(\mathbb{D} z_n,\mathbb{D}h_i)+b(y,v(z_n),h_i)+b(z_n,v(y),h_i)+b(h_i,y,v(z_n))+b(h_i,z_n,v(y))\}ds\nonumber\\
&+\int_0^{t}\mathbb{I}_{[0,\tau_M[}\{(\alpha_1+\alpha_2)\big(A(y)A(z_n)+A(z_n)A(y),\nabla h_i\big) 
+\beta\big( |A(y)|^2A(z_n), \nabla h_i\big)\}ds\\&+\int_0^{t}2\beta \mathbb{I}_{[0,\tau_M[}\big((A(z_n):
A(y))A(y),\nabla h_i\big)ds= \int_0^{t}\mathbb{I}_{[0,\tau_M[}(\psi,h_i)ds+\int_0^{t}\mathbb{I}_{[0,\tau_M[}(\nabla_yG(\cdot,y)z_n,h_i)d\mathcal{W}(s).\notag \end{align}
On the other hand,  let  $(\mathbf{p}_n,\mathbf{q}_n)$ be the solution of \eqref{approximation-adjoint}	( or  \eqref{approximation-adjoint3D}). By setting $\phi=h_i$ in \eqref{approximation-adjoint} ( or  \eqref{approximation-adjoint3D}) and using that    $(\mathbf{p}_n(\cdot),v(h_i))=(\mathbf{p}_n(\cdot),h_i)_V$, $(\mathbf{q}_ne_\k,v(h_i))=(\mathbf{q}_ne_\k,h_i)_V$, we get
\begin{align}\label{approximation-adjoint3D-duality}
&(\mathbf{p}_n(t),h_i)_V\notag\\
&+\int_t^T\hspace*{-0.25cm}\mathbb{I}_{[0,\tau_M[}\Big\{2\nu(\mathbb{D} \mathbf{p}_n,
\mathbb{D}h_i)ds-b(h_i,\mathbf{p}_n,v(y))+b(\mathbf{p}_n,h_i,v(y))+b(\mathbf{p}_n,y,v(h_i))
-b(y,\mathbf{p}_n,v(h_i))\Big\}ds\nonumber\\[-0.15cm]
&+\int_t^T\hspace*{-0.25cm}\mathbb{I}_{[0,\tau_M[}\Big\{(\alpha_1+\alpha_2)\big(A(y)A(\mathbf{p}_n)+A(\mathbf{p}_n)A(y),\nabla h_i\big)  +\beta\big( |A(y)|^2A(\mathbf{p}_n), \nabla h_i\big)\Big \} ds\\[-0.15cm]
&+2\beta \int_t^T\hspace*{-0.25cm}\mathbb{I}_{[0,\tau_M[}\big(\left(A(\mathbf{p}_n):	A(y)\right)A(y),\nabla h_i\big) ds= \int_t^T\hspace*{-0.25cm}\mathbb{I}_{[0,\tau_M[}(g-G^*\mathbf{q}_n,h_i)ds+\sum_{ \k\geq 1}\int_t^T(\mathbf{q}_ne_\k,h_i)_Vd\beta_\k(s). \notag
\end{align}
The  Itô formula and the symmetry of matrices 
$A(y)$,   $A(\mathbf{p}_n)$, $A(z_n)$ yield
\begin{align*}
&(z_n(T),\mathbf{p}_n(T))_V-(z_n(0),\mathbf{p}_n(0))_V = \int_0^{T}\mathbb{I}_{[0,\tau_M[}(\psi,\mathbf{p}_n)ds+\int_0^{T}\mathbb{I}_{[0,\tau_M[}(\nabla_yG(\cdot,y)z_n,\mathbf{p}_n)d\mathcal{W}(s)\\
&- \int_0^T\hspace*{-0.25cm}\mathbb{I}_{[0,\tau_M[}(s)(g,z_n)ds+\int_0^T\hspace*{-0.25cm}\mathbb{I}_{[0,\tau_M[}(s)(G^*\mathbf{q}_n,z_n)ds-\sum_{ \k\geq 1}\int_0^T(\mathbf{q}_ne_\k, z_n)_Vd\beta_\k(s)\\
&-\sum_{ \k\geq 1}\int_0^T\mathbb{I}_{[0,\tau_M[}(s)(\nabla_y\sigma_\k(\cdot,y)z_n,\mathbf{q}_ne_\k)ds.
\end{align*}

By	using  Remark \ref{Rmq-adjoint*noise} and knowing that    $p_n(T)=0=z_n(0)$,	we obtain 
\begin{align*}
	&\int_0^{T}\mathbb{I}_{[0,\tau_M[}(\psi,\mathbf{p}_n)ds+\int_0^{T}\mathbb{I}_{[0,\tau_M[}(\nabla_yG(\cdot,y)z_n,\mathbf{p}_n)d\mathcal{W}(s)\\&=
	 \int_0^T\hspace*{-0.25cm}\mathbb{I}_{[0,\tau_M[}(s)(g,z_n)ds+\sum_{ \k\geq 1}\int_0^T(\mathbf{q}_ne_\k, z_n)_Vd\beta_\k(s).
	\end{align*}
Taking the expectation, we deduce
$
\displaystyle\E\int_0^{T}\mathbb{I}_{[0,\tau_M[}(\psi,\mathbf{p}_n)ds=\E
\int_0^T\hspace*{-0.25cm}\mathbb{I}_{[0,\tau_M[}(s)(g,z_n)ds.
$
\end{proof}
By passing to the limit, as $n\to \infty$, in
\eqref{duality-approx},  we establish the next result.
\begin{cor}\label{Duality}
	Consider $\psi \in L^{p}((\Omega_T,\mathcal{P}_T),(L^2(D))^d).$
		Let $z$ be the solution of \eqref{Linearized}, $(\mathcalb{p},\mathcalb{q})$ be the solution of the adjoint equations in 2D  \eqref{adjoint}(or the solution of adjoint equations in 3D  \eqref{adjoint3D},	which	is denoted	by	$(\mathbf{p},\mathbf{q})$). Then, we have 
	\begin{align*}
	\E\int_0^{T}\mathbb{I}_{[0,\tau_M[}(\psi,\mathcalb{p})ds=\E
	\int_0^T\hspace*{-0.25cm}\mathbb{I}_{[0,\tau_M[}(s)(y-y_d,z)ds=
	\E\int_0^{T}\mathbb{I}_{[0,\tau_M[}(\psi,\mathbf{p})ds.
	\end{align*}
\end{cor}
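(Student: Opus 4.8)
The plan is to pass to the limit as $n\to\infty$ in the approximate duality relation \eqref{duality-approx} of Proposition \ref{Duality-approx}, using the weak convergences of the Galerkin sequences $(z_n)$, $(p_n)$ and $(\mathbf{p}_n)$ already established in the proofs of Theorems \ref{exis-z+estim}, \ref{exis-adjoint+estim} and \ref{exis-adjoint+estim3D}. The guiding observation is that each of the three quantities in \eqref{duality-approx} is the value of one of two fixed continuous linear functionals evaluated along the approximating sequences. Setting $g=y-y_d$ and
$$ \Lambda_\psi(u)=\E\int_0^T \mathbb{I}_{[0,\tau_M[}(\psi,u)\,ds, \qquad \Lambda_g(u)=\E\int_0^T \mathbb{I}_{[0,\tau_M[}(g,u)\,ds, $$
the relation \eqref{duality-approx} reads $\Lambda_\psi(p_n)=\Lambda_g(z_n)=\Lambda_\psi(\mathbf{p}_n)$, so it suffices to show that $\Lambda_\psi$ and $\Lambda_g$ are bounded on $L^2(\Omega;L^2(0,T;(L^2(D))^d))$ and then to invoke weak convergence.

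First I would verify the integrability of the test data. Since $p>2$ and $\Omega_T$ carries a finite measure, $\psi\in L^p((\Omega_T,\mathcal{P}_T);(L^2(D))^d)\hookrightarrow L^2(\Omega_T;(L^2(D))^d)$, hence $\mathbb{I}_{[0,\tau_M[}\psi\in L^2(\Omega_T;(L^2(D))^d)$ trivially. For $g=y-y_d$, the bound $\Vert y(t)\Vert_{W^{2,4}}\leq M$ on $[0,\tau_M]$ together with $y_d\in L^2(0,T;H)$ gives $\mathbb{I}_{[0,\tau_M[}(y-y_d)\in L^2(\Omega_T;H)$. Consequently both $\Lambda_\psi$ and $\Lambda_g$ define bounded linear functionals on $L^2(\Omega_T;(L^2(D))^d)$.

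Next I would transfer the weak convergences to this last space. Because the embeddings $W\hookrightarrow (L^2(D))^d$ and $V\hookrightarrow (L^2(D))^d$ are continuous, the corresponding injection operators are bounded, hence weakly continuous; therefore the weak convergences $z_n\rightharpoonup z$ and $p_n\rightharpoonup\mathcalb{p}$ in $L^2(\Omega;L^2(0,T;W))$ (see \eqref{cv-zn-weak-1} and \eqref{cv-pn-weak-1}), and $\mathbf{p}_n\rightharpoonup\mathbf{p}$ in $L^2(\Omega;L^2(0,T;V))$ (from the uniform estimates in the proof of Theorem \ref{exis-adjoint+estim3D}), persist weakly in $L^2(\Omega_T;(L^2(D))^d)$. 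Evaluating the bounded functionals $\Lambda_\psi,\Lambda_g$ along these sequences and letting $n\to\infty$ yields
$$ \Lambda_\psi(p_n)\to\Lambda_\psi(\mathcalb{p}),\quad \Lambda_g(z_n)\to\Lambda_g(z),\quad \Lambda_\psi(\mathbf{p}_n)\to\Lambda_\psi(\mathbf{p}), $$
and passing to the limit in the chain of equalities $\Lambda_\psi(p_n)=\Lambda_g(z_n)=\Lambda_\psi(\mathbf{p}_n)$ gives the asserted identity. The argument is essentially routine; the only point demanding care is that the cut-off $\mathbb{I}_{[0,\tau_M[}$ renders the forcing $g=y-y_d$ square-integrable over $\Omega_T$, which is exactly what the $W^{2,4}$-bound defining $\tau_M$ in \eqref{stopping-time} provides, and that the weak limits on the left and right are genuinely $\mathcalb{p}$ (resp.\ $\mathbf{p}$) and $z$, as identified in the existence proofs.
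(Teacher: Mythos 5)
Your argument is correct and is exactly the route the paper takes: the paper's entire proof of Corollary \ref{Duality} is the single sentence ``by passing to the limit, as $n\to\infty$, in \eqref{duality-approx}'', and your proposal simply makes that passage precise via the boundedness of $\Lambda_\psi$, $\Lambda_g$ on $L^2(\Omega_T;(L^2(D))^d)$ and the weak convergences \eqref{cv-zn-weak-1}, \eqref{cv-pn-weak-1} and their 3D analogue composed with the continuous embeddings $W,V\hookrightarrow (L^2(D))^d$. No gaps; the justification of the square-integrability of $\mathbb{I}_{[0,\tau_M[}(y-y_d)$ via the $W^{2,4}$-bound defining $\tau_M$ is the right supporting observation.
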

\subsection{A necessary optimality condition for \eqref{problem}}
Let $(\widetilde{U_M}, y(\widetilde{U_M}))$ be the optimal control pair. Consider $\psi \in \mathcal{U}_{ad}^p$ and  define $U_\rho=\widetilde{U_M}+\rho(\psi-\widetilde{U_M})$.  Thanks to 
 Proposition \ref{vari-cost}, we	get   
that the  G\^ateaux derivative of the cost functional $J$  is given by
\begin{align*}
	&\lim_{\rho\to 0}\dfrac{J_M(U_\rho,y_\rho)-J_M(\widetilde{U_M},y(\widetilde{U_M}))}{\rho}\\
	&\quad=\lambda\E\int_0^T \Vert \widetilde{U_M}\Vert_{(H^1(D))^d}^{p-2} (\widetilde{U_M},\psi-\widetilde{U_M})_{(H^1(D))^d}dt+\E\int_0^{\tau_M^{\widetilde{U_M}}}(y(\widetilde{U_M})-y_d,z)dt \geq 0,
	\end{align*}
where $z$ is the unique solution to the linearized problem \eqref{Linearized} with $\psi$ replaced by $\psi-\widetilde{U_M}$.
\vspace{2mm}\\
Let $(\tilde{\mathbf{p}}, \tilde{\mathbf{q}})$ be  the unique solution of \eqref{adjoint} (or \eqref{adjoint3D}	in	3D) with $g=y(\widetilde{U_M})-y_d$. The application of  Corollary \ref{Duality} yields	$
\displaystyle\E
\int_0^T\mathbb{I}_{[0,\tau_M[}(s)(y(\widetilde{U_M})-y_d,z)ds=	\E \int_0^T\mathbb{I}_{[0,\tau_M[}(s)(\psi-\widetilde{U_M},\tilde{\mathbf{p}})dt.
$
Finally, we obtain the following optimality condition,  for any   $\psi \in \mathcal{U}_{ad}^p:$\\
$
\displaystyle\E\int_0^T 	\big(\lambda\Vert \widetilde{U_M}\Vert_{(H^1(D))^d}^{p-2} (\widetilde{U_M},\psi-\widetilde{U_M})_{(H^1(D))^d}+ \mathbb{I}_{[0,\tau_M[}(\psi-\widetilde{U_M},\tilde{\mathbf{p}})\big)ds\geq 0.
	$
\section{Cost functional with derivatives}\label{Sec-V-control}

The control of the evolution of the velocity field	derivatives   is relevant  in the study of	turbulence, hydrodynamics and	combustion	theory. Therefore, it is important to consider  cost functionals	depending on the derivatives of the quantities of	interest.	For	example, enstrophy $\mathcal{E}$	is	one	of	the	quantities	that	play 	a crucial role	in	the	control	of	turbulent flows	and is given by $\mathcal{E}(y):=\Vert	\nabla	y\Vert_2^2.$	We	refer	to \cite{Sritharan} for approaches that  minimize the enstrophy. Our aim in this section is to propose an extension of our analysis to cost functionals including first-order derivatives of the velocity field. 
Let us consider the following problem
 \begin{align}\label{problem-V}
 \mathcal{\widetilde{P}}\begin{cases}
 \displaystyle\min_{U}
 \{ \dfrac{1}{2}\E\int_0^{\tau^U_M}\Vert y-y_d\Vert_V^2dt+  \dfrac{\lambda}{p}\E\int_0^T\Vert U\Vert_{(¨H^1(D))^d}^pdt, \quad\lambda>0:\\ \quad\qquad U \in \mathcal{U}_{ad}^p \text{ and } y \text{ is the solution of \eqref{I} for the minimizing }  U \in \mathcal{U}_{ad}^p,  
 \end{cases}
 \end{align}
with a desired target field $y_d \in L^2(0,T;W)$ and $(\tau_M^U)_{M\in \mathbb{N}}$ given by \eqref{stopping-time}.	A	similar	raisoning  to that of Theorem \ref{main-thm-V}	yields	the	following	result.
	\begin{theorem}\label{main-thm-V} 
	Assume $\mathcal{H}_1$. Then the control problem \eqref{problem-V}  admits a unique   optimal solution 
	$(\widetilde{U_M},\tilde{y}) \in \mathcal{U}_{ad}^p \times L^p(\Omega;L^p(0,T;\widetilde{W})),$
	where $\tilde{y}:=y(\widetilde{U_M})$ is the unique solution of \eqref{I} with $U=\widetilde{U_M}$. Moreover, under the assumption $\mathcal{H}_0$: 	
	\begin{itemize} 
		\item  there exists a unique solution   $\tilde{z}$  of the linearized equations  \eqref{Linearized}, in 2D and 3D with $y=\tilde{y}$ and $\psi=\psi-\widetilde{U_M}$;
		\item if $d=2$,  there exists a unique solution  $(\tilde{\mathcalb{p}},\tilde{\mathcalb{q}})$ of the stochastic backward adjoint equation \eqref{adjoint}, with force $g=v(\tilde{y}-y_d)$;
		\item if $d=3$, there exists,  at least, one  solution   $(\tilde{\mathbf{p}},\tilde{\mathbf{q}})$ of the stochastic backward adjoint equation \eqref{adjoint3D}, with force $g=v(\tilde{y}-y_d)$.
	\end{itemize} In addition,    the duality property 
	\begin{align*}
	\E\int_0^{T}\mathbb{I}_{[0,\tau_M[}(\psi-\widetilde{U_M},\tilde{\mathcalb{p}})ds=\E
	\int_0^T\hspace*{-0.25cm}\mathbb{I}_{[0,\tau_M[}(s)(y(\widetilde{U_M})-y_d,z)_Vds=	\E \int_0^T\hspace*{-0.25cm}\mathbb{I}_{[0,\tau_M[}(s)(\psi-\widetilde{U_M},\tilde{\mathbf{p}})dt
	\end{align*}
	is valid for any  $\psi \in \mathcal{U}_{ad}^p$,  and the following optimality condition 	holds,  for any   $\psi \in \mathcal{U}_{ad}^p$
	\begin{align*}
	&\E\int_0^T 	\big(\lambda\Vert \widetilde{U_M}\Vert_{(H^1(D))^d}^{p-2} (\widetilde{U_M},\psi-\widetilde{U_M})_{(H^1(D))^d}+ \mathbb{I}_{[0,\tau_M[}(\psi-\widetilde{U_M},\tilde{\mathbf{p}})\big)ds\geq 0;\\
	&\E\int_0^T 	\big(\lambda\Vert \widetilde{U_M}\Vert_{(H^1(D))^d}^{p-2} (\widetilde{U_M},\psi-\widetilde{U_M})_{(H^1(D))^d}+ \mathbb{I}_{[0,\tau_M[}(\psi-\widetilde{U_M},\tilde{\mathcalb{p}})\big)ds\geq 0.
	\end{align*}
\end{theorem}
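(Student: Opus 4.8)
The plan is to reproduce the proof of Theorem \ref{main-thm} almost verbatim, systematically replacing the $L^2$-based quantities by their $V$-counterparts; the only genuinely new point is the identification of the correct force for the adjoint system, and this is where the stronger hypothesis $y_d\in L^2(0,T;W)$ enters.

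First I would establish the analogue of Theorem \ref{thm-control}, namely that $f_M(U)=\frac{1}{2}\E\int_0^{\tau_M^U}\Vert y(U)-y_d\Vert_V^2\,dt$ is continuous on $\mathcal{U}_{ad}^p$. Following the proof of Lemma \ref{lemma-continuous}, I split $|f_M(U_1)-f_M(U_2)|$ into a stability term $\E\int_0^{\underline{\tau_M}}\Vert y(U_1)-y(U_2)\Vert_V^2\,dt$ — now controlled \emph{directly} by Lemma \ref{Lemma-V-stability} with $p=2$ — and a stopping-time mismatch term, bounded via Lemma \ref{Lemma-equality-stopping} together with $\Vert y(\bar u)\Vert_V\le CM$ on $[0,\overline{\tau_M}]$ and $y_d\in L^2(0,T;V)$. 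Continuity and non-negativity of $f_M$, plus the uniform convexity of $X$ and the closed, bounded convexity of $\mathcal{U}_{ad}^p$, then yield existence and uniqueness of $\widetilde{U_M}$ through \cite[Thm.~4]{Goebel}, exactly as in Theorem \ref{thm-control}.

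Next, the linearized equation \eqref{Linearized}, its well-posedness (Theorem \ref{exis-z+estim}) and the Gâteaux identity $y_\rho=y+\rho z+\rho\delta_\rho$ with $\E\sup\Vert\delta_\rho\Vert_V^2\to0$ (Proposition \ref{prop-gateux-diff}) remain untouched, since none of them refers to the cost. I would then recompute the variation of the cost as in Proposition \ref{vari-cost}: the quadratic part contributes $\E\int_0^{\tau_M^U}(y-y_d,z)_V\,dt$, the terms $I_1,\dots,I_5$ being reproduced with $\Vert\cdot\Vert_V$ in place of $\Vert\cdot\Vert_2$. Here Corollary \ref{cor-rest-0} supplies the $V$-rate for $I_1$, Proposition \ref{prop-gateux-diff} gives the $V$-convergence $\delta_\rho\to0$ for $I_2$, while Lemma \ref{Lemma-equality-stopping} and Corollary \ref{cor-delta0} control $I_3,I_4,I_5$ through the probability of $\{\tau_M^U\neq\tau_M^{U_\rho}\}$; the $S_2$ term is identical to before.

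The decisive step is the duality. Since $(u,z)_V=(v(u),z)$, the variation $\E\int_0^{\tau_M^U}(y-y_d,z)_V\,dt=\E\int_0^{\tau_M^U}(v(y-y_d),z)\,dt$ forces the adjoint force to be $g=v(\tilde y-y_d)$. With this choice the Itô pairing of $z_n$ against $(\mathbf{p}_n,\mathbf{q}_n)$ performed in Proposition \ref{Duality-approx} is formally unchanged, so passing to the limit as in Corollary \ref{Duality} gives $\E\int_0^T\mathbb{I}_{[0,\tau_M[}(\psi,\mathcalb{p})\,ds=\E\int_0^T\mathbb{I}_{[0,\tau_M[}(v(y-y_d),z)\,ds=\E\int_0^T\mathbb{I}_{[0,\tau_M[}(y-y_d,z)_V\,ds$, and likewise for $\mathbf{p}$ in 3D. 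Inserting $\psi-\widetilde{U_M}$ into the linearized problem and combining with the variation yields the optimality condition exactly as in Section \ref{Section-Optimality-condition}. The main obstacle, and the precise reason for requiring $y_d\in L^2(0,T;W)$, is to verify that $g=v(\tilde y-y_d)$ is admissible for Theorems \ref{exis-adjoint+estim} and \ref{exis-adjoint+estim3D}: one checks $v(\tilde y-y_d)\in L^p(\Omega;L^2(0,T;H))$ using $\tilde y\in\widetilde W$ and $y_d\in L^2(0,T;W)$ (so that $\Delta\tilde y,\Delta y_d\in L^2$), replacing $g$ by its Leray projection $\mathbb{P}g$ if necessary, which is harmless since $g$ is tested only against divergence-free fields.
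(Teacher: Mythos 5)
Your proposal matches the paper's intended argument: the paper proves Theorem \ref{main-thm-V} by repeating the proof of Theorem \ref{main-thm} with $\Vert\cdot\Vert_2$ replaced by $\Vert\cdot\Vert_V$, using Lemma \ref{Lemma-V-stability} and Lemma \ref{Lemma-equality-stopping} for continuity of the tracking term, the unchanged linearized/Gâteaux analysis for the variation $\E\int_0^{\tau_M^U}(y-y_d,z)_V\,dt$, and the identity $(u,z)_V=(v(u),z)$ to set $g=v(\tilde y-y_d)$ in Proposition \ref{Duality-approx} and Corollary \ref{Duality}. Your additional check that $v(\tilde y-y_d)$ is an admissible datum for the adjoint theorems (using $y_d\in L^2(0,T;W)$) is a correct and welcome filling-in of a detail the paper leaves implicit.
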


\section{Technical lemmas} \label{Technical-results}
In this section, we  establish some lemmas which play an important role in the estimation of the nonlinear terms	and	the	derivation	of	the	optimality	system.
We recall that $D  \subset  \mathbb{R}^d, d=2,3$ is a  bounded  and simply connected domain  with regular boundary $\partial D$ and  $\eta=(\eta_k)_{k=1}^d$, $\tau=(\tau_k)_{k=1}^d$  denote the outward normal and the unitary tangent  to the boundary $\partial D$, respectively.

We start with a result, which can be deduced by closely following the analysis in  \cite[Appendix]{Busuioc}. 
\begin{lemma}\label{tecnical-lemma}
 There exists $C:=C(D,\eta)>0$ such that the following inequalities hold
	\begin{align*}
	\vert b(\delta,y,v(\delta))\vert \leq C \Vert y \Vert_{W^{2,4}}\Vert \delta\Vert_{V}^2	
\text{		 and 	}	\vert b(y,\delta,v(\delta))\vert \leq C \Vert y \Vert_{W^{2,4}}\Vert \delta\Vert_{V}^2, \quad \forall y \in \widetilde{W}, \forall \delta\in W.
	\end{align*}
\end{lemma}

\begin{lemma}\label{interpolation-estimate-lem}
Let $t>0$ and $0<\epsilon\leq 1$. There exists $C_\epsilon(D)>0$ such that
	\begin{align*}
\E\int_0^t \vert b(y,v(\phi),\mathbb{P}v(y))\vert^q &\leq 	\E\int_0^t \big(\Vert y\Vert_\infty \Vert \phi\Vert_{H^3}\Vert y\Vert_{W}\big)^q ds\\&\leq C_\epsilon(D) \bigl(\E\int_0^t\Vert y\Vert_{W}^{2q}ds+\Vert \phi\Vert_{L^{2q(d+1+\epsilon)}(\Omega\times(0,t);H^3)}^{2q(d+\epsilon)}\Vert y\Vert_{L^{2q(d+1+\epsilon)}(\Omega\times(0,t);V)}^{2q}\bigr),
	\end{align*}
$ \forall q\in [1,\infty[$,  $y\in L^{2q}(\Omega\times(0,t);W) \cap  L^{(d+1+\epsilon)2q}(\Omega\times(0,t);V)$ and $\phi \in L^{(d+1+\epsilon)2q}(\Omega\times(0,t);(H^3(D))^d)$.
\end{lemma}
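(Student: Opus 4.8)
The plan is to prove the two inequalities in turn. The first is pointwise in $(\omega,s)$ and is purely a matter of H\"older's inequality, while the second is an interpolation estimate whose exponents must be tuned to $\epsilon$.

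For the first inequality, I would recall the definition of the trilinear form to write $b(y,v(\phi),\mathbb{P}v(y))=\int_D (y\cdot\nabla)v(\phi)\cdot \mathbb{P}v(y)\,dx$ and apply H\"older with exponents $(\infty,2,2)$, obtaining $\vert b(y,v(\phi),\mathbb{P}v(y))\vert\le \Vert y\Vert_\infty\,\Vert \nabla v(\phi)\Vert_2\,\Vert \mathbb{P}v(y)\Vert_2$. Since $v(\phi)=\phi-\alpha_1\Delta\phi$ one has $\Vert\nabla v(\phi)\Vert_2\le C\Vert\phi\Vert_{H^3}$, and because $\Vert y\Vert_W^2=\Vert y\Vert_V^2+\Vert\mathbb{P}v(y)\Vert_2^2$ one has $\Vert\mathbb{P}v(y)\Vert_2\le\Vert y\Vert_W$; absorbing the constant then gives the first inequality after raising to the power $q$ and integrating over $\Omega\times(0,t)$.

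The heart of the argument is the second inequality. First I would invoke \cite[Theorem 3]{Busuioc} (the modified Stokes problem) to use that $\Vert\cdot\Vert_W\simeq\Vert\cdot\Vert_{H^2}$ on $W$ and $\Vert\cdot\Vert_V\simeq\Vert\cdot\Vert_{H^1}$ on $V$. Setting $\theta:=1-\tfrac{1}{d+\epsilon}$, one checks that $\theta>\tfrac d2-1$ for every $d\in\{2,3\}$ and $\epsilon\in(0,1]$, so that $H^{1+\theta}\hookrightarrow L^\infty$ and the Gagliardo--Nirenberg interpolation yields $\Vert y\Vert_\infty\le C\Vert y\Vert_{H^2}^{\theta}\Vert y\Vert_{H^1}^{1-\theta}\le C\Vert y\Vert_W^{\theta}\Vert y\Vert_V^{1-\theta}$. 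Substituting this into the integrand bounds it, up to a constant, by $\Vert y\Vert_W^{q(1+\theta)}\Vert y\Vert_V^{q(1-\theta)}\Vert\phi\Vert_{H^3}^{q}$.

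Then I would apply Young's inequality with the conjugate exponents $\tfrac{2}{1+\theta}$ and $\tfrac{2}{1-\theta}$, splitting off $\Vert y\Vert_W^{2q}$ from the remaining factor $\big(\Vert y\Vert_V^{q(1-\theta)}\Vert\phi\Vert_{H^3}^q\big)^{2/(1-\theta)}=\Vert y\Vert_V^{2q}\Vert\phi\Vert_{H^3}^{2q(d+\epsilon)}$, where I used $1-\theta=\tfrac1{d+\epsilon}$. Taking $\E\int_0^t$ and then applying H\"older in $(\omega,s)$ with exponents $d+1+\epsilon$ and $\tfrac{d+1+\epsilon}{d+\epsilon}$ to the mixed term produces exactly $\Vert\phi\Vert_{L^{2q(d+1+\epsilon)}(\Omega\times(0,t);H^3)}^{2q(d+\epsilon)}\,\Vert y\Vert_{L^{2q(d+1+\epsilon)}(\Omega\times(0,t);V)}^{2q}$, which closes the estimate. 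The main obstacle, indeed the only delicate point, is the exponent bookkeeping: one must verify that the single choice $\theta=1-\tfrac1{d+\epsilon}$ is simultaneously admissible for the $L^\infty$ interpolation in both dimensions and makes the Young and H\"older exponents land precisely on the norms $L^{2q(d+1+\epsilon)}$ with powers $2q(d+\epsilon)$ and $2q$.
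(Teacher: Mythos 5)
Your proposal is correct and follows essentially the same route as the paper: H\"older to reduce to $\Vert y\Vert_\infty\Vert\phi\Vert_{H^3}\Vert y\Vert_W$, an interpolation bound $\Vert y\Vert_\infty\le C\Vert y\Vert_{H^2}^{1-1/(d+\epsilon)}\Vert y\Vert_{H^1}^{1/(d+\epsilon)}$, Young's inequality with conjugate exponents $\tfrac{2}{1+\theta},\tfrac{2}{1-\theta}$, and a final H\"older in $\Omega\times(0,t)$ with exponents $d+1+\epsilon$ and $\tfrac{d+1+\epsilon}{d+\epsilon}$. The only (immaterial) difference is how the $L^\infty$ interpolation is obtained --- you use the fractional embedding $H^{1+\theta}\hookrightarrow L^\infty$ plus Hilbert-scale interpolation, whereas the paper uses $W^{1,d+\epsilon}\hookrightarrow L^\infty$ followed by Lebesgue interpolation of $\nabla y$ and $H^2\hookrightarrow W^{1,2(d-1+\epsilon)}$; both yield the identical exponents.
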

\begin{proof} Let us consider $(q,\epsilon) \in [1,\infty[ \times ]0,1]$, and recall that 
$b(y,v(\phi),\mathbb{P}v(y))=\displaystyle\sum_{i,j=1}^d\int_Dy^i\dfrac{\partial v(\phi)^j}{\partial x_i}\mathbb{P}v(y)^jdx.$
Applying the H\"older inequality, we derive
	\begin{align}\label{apendix-1}
		\vert b(y,v(\phi),\mathbb{P}v(y))\vert\leq  C\Vert y\Vert_\infty \Vert \phi\Vert_{H^3}\Vert y\Vert_{W} \leq C_D\Vert y\Vert_{W^{1,d+\epsilon}} \Vert \phi\Vert_{H^3}\Vert y\Vert_{W},	\end{align}
		where $C_D >0$ is related to  the Sobolev embedding $W^{1,d+\epsilon}(D) \hookrightarrow L^\infty(D)$  (see \cite[Thm. 1.20]{Roubicek}). Applying again the  H\"older inequality, and the Sobolev embedding $H^2(D)\hookrightarrow W^{1,2(d-1+\epsilon)}(D)$, we deduce 
		\begin{align}\label{inter-est}
		\Vert y\Vert_{W^{1,d+\epsilon}} \leq  C\Vert y\Vert_{H^1}^{\frac{1}{d+\epsilon}}\Vert y\Vert_{W^{1,2(d-1+\epsilon)}}^{\frac{d-1+\epsilon}{d+\epsilon}}\leq  C\Vert y\Vert_{H^1}^{\frac{1}{d+\epsilon}}\Vert y\Vert_{H^2}^{\frac{d-1+\epsilon}{d+\epsilon}}.
		\end{align}
	Inserting \eqref{inter-est} in  \eqref{apendix-1} and next taking the $q^{th}$ power of the resulting inequality, we obtain
			\begin{align*}
		\vert b(y,v(\phi),\mathbb{P}v(y))\vert^q &
		\leq C_D\Vert y\Vert_{V}^{\frac{q}{d+\epsilon}} \Vert \phi\Vert_{H^3}^q\Vert y\Vert_{W}^{q\frac{2d+2\epsilon-1}{d+\epsilon}}\leq C_\epsilon(D)\bigl( \Vert y\Vert_{W}^{2q}+	 \Vert y\Vert_{V}^{2q}\Vert \phi\Vert_{H^3}^{2(d+\epsilon)q}\bigr),\end{align*}
		where we used Young inequality with $\gamma=\dfrac{2(d+\epsilon)}{2(d+\epsilon)-1} >1$.  Now,  the H\"older inequality yields
		\begin{align*}
			\E\int_0^t \Vert y\Vert_{V}^{2q}\Vert \phi\Vert_{H^3}^{2(d+\epsilon)q} ds &\leq \bigl(\E\int_0^t \Vert \phi\Vert_{H^3}^{2q(d+1+\epsilon)}ds\bigr)^{\frac{d+\epsilon}{d+1+\epsilon}} \bigl(\E\int_0^t \Vert y\Vert_{V}^{2q(d+1+\epsilon)}ds\bigr)^{\frac{1}{d+1+\epsilon}}\\
			&\leq  \Vert \phi\Vert_{L^{2q(d+1+\epsilon)}(\Omega\times(0,t);H^3)}^{2q(d+\epsilon)}\Vert y\Vert_{L^{2q(d+1+\epsilon)}(\Omega\times(0,t);V)}^{2q}
.		\end{align*}
\end{proof}


Convenient modifications in the proof of \cite[Lemma 5.3.]{Arada-Cip} yield
\begin{lemma}\label{IPP2D-2}
	Let $d=2$,  $y,\psi \in \widetilde{W}$ and $\phi \in V$. Then there exists $C>0$ depending only on $D$ such that
	$	\vert (\text{curl}(v(y\times\psi))-\text{curl} v(y)\times \psi,\phi)\vert \leq C(1+\alpha_1)\Vert y\Vert_{W^{2,4}}\Vert \psi \Vert_{W}\Vert \phi \Vert_{2}+\alpha_1\vert b(y,\Delta\psi,\phi)\vert. 
	$
\end{lemma}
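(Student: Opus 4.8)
The plan is first to observe that, since $y,\psi\in\widetilde W$, both $\text{curl}\,v(y\times\psi)$ and $\text{curl}\,v(y)\times\psi$ belong to $(L^2(D))^2$, so that the left-hand side is a continuous linear functional of $\phi\in(L^2(D))^2$; I would therefore prove the inequality for smooth $\phi$ and extend it to $\phi\in V$ by density, noting that the admissible term $b(y,\Delta\psi,\phi)=\int_D(y\cdot\nabla)\Delta\psi\cdot\phi\,dx$ is likewise continuous in $\phi\in L^2$ because $(y\cdot\nabla)\Delta\psi\in L^2$ (as $y\in L^\infty$ and $\psi\in H^3$). The core of the argument then starts by splitting $v=I-\alpha_1\Delta$ and writing the pairing as an $\alpha_1$-free part plus an $\alpha_1$-correction.

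For the $\alpha_1$-free part I would use the two-dimensional vector identity for $\text{curl}(y\times\psi)-\text{curl}(y)\times\psi$ together with $\text{div}\,y=\text{div}\,\psi=0$, and reduce the pairing against $\phi$ to a combination of trilinear forms (via Lemma~\ref{Lemma 3.1} and the relation recorded in Remark~\ref{22_3}, which are valid for smooth $\phi$). Using the antisymmetry $b(a,b,c)=-b(a,c,b)$ for $a\in V$, the surviving contributions reduce to $-b(y,\psi,\phi)-b(\phi,\psi,y)$. Keeping $\phi$ in $L^2$, these are bounded by Hölder's inequality and the two-dimensional embeddings $W^{2,4}(D)\hookrightarrow L^\infty(D)$ and $H^2(D)\hookrightarrow W^{1,4}(D)$: indeed $|b(y,\psi,\phi)|\le\|y\|_\infty\|\nabla\psi\|_2\|\phi\|_2$ and $|b(\phi,\psi,y)|\le\|\phi\|_2\|\nabla\psi\|_4\|y\|_4$, both dominated by $C\|y\|_{W^{2,4}}\|\psi\|_W\|\phi\|_2$.

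The delicate contributions are those of the $\alpha_1$-correction, which carry a Laplacian acting, after the identities, on $\phi$. I would transfer these Laplacians by the second Green identity; the boundary integrals so produced are controlled by $C\|y\|_{W^{2,4}}\|\psi\|_W\|\phi\|_2$ using $\phi\cdot\eta=0$ together with the tangency/slip conditions built into $W$, following \cite[Lemma 5.3]{Arada-Cip}. Expanding $\Delta[(y\cdot\nabla)\psi]$ and $\Delta[(\psi\cdot\nabla)y]$ by the Leibniz rule, the top-order pieces are $(y\cdot\nabla)\Delta\psi$ and $(\psi\cdot\nabla)\Delta y$: the first, paired with $\phi$, is exactly $\pm\alpha_1 b(y,\Delta\psi,\phi)$ and is kept explicit; the second yields $\pm\alpha_1 b(\psi,\Delta y,\phi)$, which I would cancel against the term obtained by integrating $-\alpha_1 b(\psi,\phi,\Delta y)$ by parts (again by antisymmetry of $b$). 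All remaining lower-order pieces, such as $(\Delta y\cdot\nabla)\psi$ and the mixed second-derivative terms $\nabla y:\nabla\nabla\psi$, are estimated by Hölder and $W^{2,4}\hookrightarrow W^{1,\infty}$, giving once more $C\,\alpha_1\|y\|_{W^{2,4}}\|\psi\|_W\|\phi\|_2$; collecting with the $\alpha_1$-free estimate produces the factor $C(1+\alpha_1)$.

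The main obstacle is precisely this reorganization of the $\alpha_1$-correction. On the one hand, one must track the boundary integrals generated by the integration by parts and bound them using only $\phi\cdot\eta=0$ and the boundary conditions defining $W$ and $V$; on the other hand, one must verify the crucial cancellation of the most singular term $\alpha_1 b(\psi,\Delta y,\phi)$ — which would otherwise force the non-admissible factor $\|y\|_{H^3}$ — so that $\alpha_1 b(y,\Delta\psi,\phi)$ remains as the only contribution that cannot be absorbed into $C(1+\alpha_1)\|y\|_{W^{2,4}}\|\psi\|_W\|\phi\|_2$. This isolation of the single irreducible term, and the demonstration that everything else collapses to the clean product bound, is the heart of the proof.
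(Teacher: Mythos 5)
Your proposal is correct and follows essentially the same route as the paper: split $v=I-\alpha_1\Delta$, bound the zeroth-order part $b(\phi,y,\psi)-b(y,\psi,\phi)$ by H\"older and the 2D embeddings $W^{2,4}\hookrightarrow W^{1,\infty}$, $H^2\hookrightarrow W^{1,4}$, and for the $\alpha_1$-part use the identity $\text{curl}\,\Delta(y\times\psi)=\Delta(\psi\cdot\nabla y-y\cdot\nabla\psi)$ expanded by Leibniz, keeping $b(y,\Delta\psi,\phi)$ explicit while the dangerous term $b(\psi,\Delta y,\phi)$ cancels against the contribution of $-\alpha_1\,\text{curl}(\Delta y)\times\psi$ after the antisymmetry of $b$. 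You correctly identify both the irreducible term and the crucial cancellation, and your remaining estimates match the paper's.
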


\begin{lemma}\label{IPP1}
	Let $y,p\in \widetilde{W}$ and $\phi \in W$. Then
	\begin{itemize}
		\item 2D case: we have 
		$	(curl v(y\times p),\phi)=b(p,y,v(\phi))-b(y,p,v(\phi)). 
		$	\item 3D case: we have
	$		(curl v(y\times p),\phi)=b(p,y,v(\phi))-b(y,p,v(\phi))+\alpha_1I_{\partial D},	$
		 \begin{align}\label{boundary-3D}
	\text{		 where 	}\qquad	I_{\partial D}&=
		\int_{\partial D}(\text{curl} (p\cdot \nabla y-y\cdot\nabla p)\cdot (\tau\times \eta) (\phi\cdot \tau) dS\nonumber\\
		&-2\int_{\partial D}(p\cdot \nabla y-y\cdot\nabla p)\cdot\sum_{ k=1}^3 \tau_k(\eta\times \nabla )\eta_k )(\phi\cdot \tau)dS.
		\end{align}			\end{itemize}
\end{lemma}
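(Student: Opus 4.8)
The plan is to expand $v$ and isolate the two orders of differentiation. Writing $v(y\times p)=(y\times p)-\alpha_1\Delta(y\times p)$ gives
\[
(\operatorname{curl} v(y\times p),\phi)=(\operatorname{curl}(y\times p),\phi)-\alpha_1(\operatorname{curl}\Delta(y\times p),\phi),
\]
and I would treat the two summands separately, following the computations in the appendix of \cite{Busuioc} and adapting \cite[Lem. 5.3]{Arada-Cip}. The regularity $y,p\in\widetilde{W}\subset (H^3(D))^d$ and $\phi\in W\subset (H^2(D))^d$ justifies every integration by parts below.

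For the first summand I use that $y$ and $p$ are divergence free, so the classical identity $\operatorname{curl}(y\times p)=(p\cdot\nabla)y-(y\cdot\nabla)p$ holds; pairing with $\phi$ and recalling $b(\phi_1,\phi_2,\phi_3)=((\phi_1\cdot\nabla)\phi_2,\phi_3)$ yields at once
\[
(\operatorname{curl}(y\times p),\phi)=b(p,y,\phi)-b(y,p,\phi).
\]

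The second summand is where the dimension enters. Since $\operatorname{curl}$ and $\Delta$ commute, $\operatorname{curl}\Delta(y\times p)=\Delta\big[(p\cdot\nabla)y-(y\cdot\nabla)p\big]$. I would then transfer the Laplacian onto $\phi$ by Green's second identity, so that the interior contribution is exactly $b(p,y,\Delta\phi)-b(y,p,\Delta\phi)$, while a surface integral remains. Combining with the first summand and factoring, the interior terms recombine, e.g.
\[
b(p,y,\phi)-\alpha_1 b(p,y,\Delta\phi)=b(p,y,v(\phi)),
\]
and likewise for the $(y,p)$ term, which produces the announced $b(p,y,v(\phi))-b(y,p,v(\phi))$; the leftover $-\alpha_1$ times the surface integral is what has to be identified with $\alpha_1 I_{\partial D}$.

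The main obstacle is precisely this boundary analysis. In the 2D case I would show, using the tangential/normal decomposition on $\partial D$ together with the constraints $y\cdot\eta=p\cdot\eta=\phi\cdot\eta=0$ and the Navier-slip condition $(\eta\cdot\mathbb{D}(\cdot))\big|_{\text{tan}}=0$, that the surviving surface integral cancels identically, giving the clean formula $(\operatorname{curl} v(y\times p),\phi)=b(p,y,v(\phi))-b(y,p,v(\phi))$. In 3D the same manipulations do not close: after expanding the surface term one is left with genuinely non-vanishing contributions. To cast them in the intrinsic form \eqref{boundary-3D} I would rewrite the normal derivatives appearing in Green's identity through $\operatorname{curl}(p\cdot\nabla y-y\cdot\nabla p)$ paired against $\tau\times\eta$ (the tangential part being selected since $\phi\cdot\eta=0$, so that only $\phi\cdot\tau$ survives on $\partial D$), and account for the curvature of $\partial D$ through the second–fundamental–form terms $\sum_{k}\tau_k(\eta\times\nabla)\eta_k$ obtained by differentiating the constraint $\phi\cdot\eta=0$ along the boundary. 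Keeping careful track of signs and of these curvature contributions is the delicate step; the 2D reduction, by contrast, is routine once the boundary term is written out explicitly.
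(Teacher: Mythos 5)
Your overall decomposition is the same as the paper's: split $v(y\times p)$ into its zeroth-order and second-order parts, dispatch the first via $\operatorname{curl}(y\times p)=(p\cdot\nabla)y-(y\cdot\nabla)p$, move the second-order derivatives onto $\phi$ by two integrations by parts, and recombine the interior terms into $b(p,y,v(\phi))-b(y,p,v(\phi))$. The one genuine methodological difference is the choice of integration-by-parts identity. The paper writes $-\alpha_1\operatorname{curl}\Delta(y\times p)=\alpha_1\operatorname{curl}\operatorname{curl}\operatorname{curl}(y\times p)$ and applies the adjointness identity $(\operatorname{curl}A,B)=(A,\operatorname{curl}B)+\int_{\partial D}(A\times B)\cdot\eta\,dS$ twice, so the boundary remainders appear directly as $\int_{\partial D}(\operatorname{curl}\operatorname{curl}(y\times p)\times\phi)\cdot\eta\,dS$ and $\int_{\partial D}(\operatorname{curl}(y\times p)\times\operatorname{curl}\phi)\cdot\eta\,dS$; the first reduces to the $(\tau\times\eta)(\phi\cdot\tau)$ integral using only the tangency of $\phi$, and the second is exactly the object treated by \cite[Prop. 2]{Busuioc}, which produces the curvature term $\sum_k\tau_k(\eta\times\nabla)\eta_k$. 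Your route via Green's second identity instead leaves boundary integrals of the form $\int_{\partial D}\bigl(\partial_\eta w\cdot\phi-w\cdot\partial_\eta\phi\bigr)dS$ with $w=p\cdot\nabla y-y\cdot\nabla p$, which must then be converted into the curl/curvature form of \eqref{boundary-3D}; that conversion is an extra nontrivial step the curl-based route avoids.

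The gap is precisely there: in 3D the identification of the leftover surface integral with $I_{\partial D}$ \emph{is} the content of the lemma, and your proposal only announces it as ``the delicate step'' without carrying it out. In particular, rewriting $\partial_\eta w$ and $\partial_\eta\phi$ on $\partial D$ in terms of $\operatorname{curl}w\cdot(\tau\times\eta)$ and of the curvature terms requires exactly the boundary decomposition supplied by \cite[Prop. 2]{Busuioc} (together with $\phi=(\phi\cdot\tau)\tau$ and the identity $\sum_k\phi_k(\eta\times\nabla)\eta_k=(\phi\cdot\tau)\sum_k\tau_k(\eta\times\nabla)\eta_k$), and without it the stated form of $I_{\partial D}$, including its signs and the factor $2$, is not established. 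Likewise, the 2D cancellation is not ``routine'' from the Navier-slip condition alone; it rests on the explicit planar relation $\tau=(-\eta_2,\eta_1)$ (cf.\ \cite[Lemma 3.6]{Chem-Cip-2018}). As a plan your argument is sound and parallel to the paper's, but to count as a proof it must execute the boundary computation rather than defer it.
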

\begin{proof}
	Let $y,p\in \widetilde{W}$ and $\phi \in W$. Integrating twice by parts, we deduce
	\begin{align*}
	(curl v(y\times p),\phi)&=b(p,y,v(\phi))-b(y,p,v(\phi))\\&+\alpha_1[\int_{\partial D}(\text{curl} \text{curl }(y\times p)\times  \phi)\cdot \eta dS+\int_{\partial D}( \text{curl }(y\times p)\times \text{curl } \phi)\cdot \eta dS].
	\end{align*} 
	Since $div(y)=div(p)=0$, one has  $\text{curl }(y\times p)=p\cdot\nabla y-y\cdot\nabla p$ and therefore
	\begin{align*}
	\text{curl} \text{curl }(y\times p)&=(d-1)[p\cdot\nabla curl(y)-y\cdot\nabla curl(p)]\\
	&-[curl(y)\cdot\nabla p-curl(p)\cdot\nabla y]+2\sum_{ k=1}^d\nabla p_k\times\nabla y_k.
	\end{align*}
	
We wish to draw the reader’s attention to the
well known explicit relation  between the normal and tangent vectors to the boundary in 2D:	$
\eta=(\eta_1,\eta_2)$	and  $\tau=(-\eta_2,\eta_1),$
 which will play  a crucial role to show that boundary terms vanish	 in 2D,	we refer to \cite[Lemma 3.6.]{Chem-Cip-2018} for more details.	Concerning	the	3D	case,
 the situation is more delicate. First, the term $curl(y)\cdot\nabla p-curl(p)\cdot\nabla y$ does not vanish	as	in	2D. Using \cite[Prop. 2]{Busuioc}, we are able to infer
		\begin{align*}
		F:&=\int_{\partial D}( \text{curl }(y\times p)\times \text{curl } \phi)\cdot \eta dS=-2\int_{\partial D}(p\cdot \nabla y-y\cdot\nabla p)\cdot(\eta \times\sum_{ k=1}^3\phi_k(\eta\times \nabla )\eta_k )dS.
		\end{align*}
			On the other hand, we notice that
	$	B:=	\int_{\partial D}(\text{curl} \text{curl }(y\times p)\times  \phi)\cdot \eta dS=\int_{\partial D}\text{curl} \text{curl }(y\times p)\cdot ( \phi\times \eta) dS.$
		Since $\phi$ is tangent to $\partial D$, we  derive
	$		B=\displaystyle\int_{\partial D}\text{curl} \text{curl }(y\times p)\cdot (\tau\times \eta) (\phi\cdot \tau) dS.
	$
		Thus
		\begin{align*}
		F+B&= \int_{\partial D}(\text{curl} \text{curl }(y\times p)\cdot (\tau\times \eta) (\phi\cdot \tau) d
		S-2\int_{\partial D}(p\cdot \nabla y-y\cdot\nabla p)\cdot(\eta \times\sum_{ k=1}^3\phi_k(\eta\times \nabla )\eta_k )dS.
		\end{align*}
		 Since $\phi\cdot \eta =0$, we write   $\phi=(\phi\cdot \tau) \tau$ and 
		$\sum_{ k=1}^3\phi_k(\eta\times \nabla )\eta_k = (\phi\cdot \tau)\sum_{ k=1}^3 \tau_k(\eta\times \nabla )\eta_k ).$
		Finally, 
		\begin{align*}
		F+B&= \int_{\partial D}(\text{curl} (p\cdot \nabla y-y\cdot\nabla p)\cdot (\tau\times \eta) (\phi\cdot \tau) dS-2\int_{\partial D}(p\cdot \nabla y\\
		&-y\cdot\nabla p)\cdot\sum_{ k=1}^3 \tau_k(\eta\times \nabla )\eta_k )(\phi\cdot \tau)dS.
		\end{align*}
\end{proof}
\section*{Acknowledgment}
This work is funded by national funds through the FCT - Funda\c c\~ao para a Ci\^encia e a Tecnologia, I.P., under the scope of the projects UIDB/00297/2020 and UIDP/00297/2020 (Center for Mathematics and Applications).


 \end{document}